\theoremstyle{plain}
\newtheorem{thm}{Theorem}
\newtheorem*{thm*}{Theorem}
\newtheorem{prop}{Proposition}
\newtheorem{lem}{Lemma}
\newtheorem*{lem*}{Lemma}
\newtheorem*{defi*}{Definition}
\newtheorem{ass}{Assumption}
\newtheorem{rque}{Remark}
\newtheorem*{rques*}{Remarks}
\begin{document}

\begin{frontmatter}
\title{Scaled minimax optimality in high-dimensional linear regression: A non-convex algorithmic regularization approach}
\runtitle{ Adaptive Iterative Hard Thresholding}

\begin{aug}
\author{\fnms{Mohamed} \snm{Ndaoud}
\ead[label=e1]{ndaoud@usc.edu}}


\affiliation{University of Southern California
}

\address{Department of Mathematics\\
University of Southern California\\
Los Angeles, CA 90089 \\
\printead{e1}}

\end{aug}

\begin{abstract}
 The question of fast convergence in the classical problem of high dimensional linear regression has been extensively studied. Arguably, one of the fastest procedures in practice is Iterative Hard Thresholding (IHT). Still, IHT relies strongly on the knowledge of the true sparsity parameter $s$. In this paper, we present a novel fast procedure for estimation in the high dimensional linear regression. Taking advantage of the interplay between estimation, support recovery and optimization we achieve both optimal statistical accuracy and fast convergence. The main advantage of our procedure is that it is fully adaptive, making it more practical than state of the art IHT methods. Our procedure achieves optimal statistical accuracy faster than, for instance, classical algorithms for the Lasso. Moreover, we establish sharp optimal results for both estimation and support recovery. As a consequence, we present a new iterative hard thresholding algorithm for high dimensional linear regression that is scaled minimax optimal (achieves the estimation error of the oracle that knows the sparsity pattern if possible), fast and adaptive. 
\end{abstract}
\end{frontmatter}
 \section{Introduction}

Datasets with large numbers of features are becoming increasingly available and
important in every field of research and innovation. The representation of such data in any
coordinate system leads to so called high dimensional data, whose analysis is often associated
with phenomena that go beyond classical estimation theory. Further assumptions on the structure of the underlying signal are required in order
to make the estimation problem more well-defined. For instance in a problem of high dimensional
regression, we may assume that the vector to estimate is sparse. The present work aims to
develop an estimation method that can extract information from existing high dimensional structured datasets in a more efficient way. 
\subsection{Statement of the problem}

Assume that we observe the vector of measurements $Y\in\mathbf{R}^{n}$ satisfying 
\begin{equation}\label{eq:def}
     Y = X\beta + \sigma \xi ,
\end{equation}
where $X\in\mathbf{R}^{n \times p}$ is a given design or sensing matrix. The noise $\xi$ is a centered $1$-subGaussian random vector (i.e. $\forall \lambda \in \mathbf{R}^n, \mathbf{E}(e^{\langle \lambda , \xi \rangle}) \leq e^{ \| \lambda \|^2/2}$), and $\xi$ is independent of $X$. We denote by $\mathbf{P}_{\beta}$ the distribution of $(Y,X)$ in model \eqref{eq:def}, and by $\mathbf{E}_{\beta}$ the corresponding expectation.

For an integer $s\le p$, we assume that $\beta$ is $s$-sparse, that is it has at most $s$ non-zero components, and we denote by $S_{\beta}$ its support and $\eta_{\beta}$ the corresponding binary vector.  We also assume that components of $\beta$ cannot be arbitrarily small. This motivates us to define the following set $\Omega^{p}_{s,a}$ of $s$-sparse vectors: 
$$  \Omega_{s,a} = \left\{\beta \in \mathbf{R}^{p}:\quad  |\beta|_{0} \leq s \quad \text{and} \quad   |\beta_{i}| \geq a, \quad \forall i \in S_{\beta} \right\}, $$
where $a>0$, $\beta_{i}$ are the components of $\beta$ for $i=1,\dots ,p,$ and $|\beta|_{0}$ denotes the number of non-zero components of $\beta$. The value $a$ characterizes the scale of the signal. In the rest of the paper, we will always denote by $\beta$ the vector to estimate, while $\hat{\beta}$ will denote the corresponding estimator. Let us denote by $\psi$ the scaled minimax risk
\begin{equation}\label{eq:scaled_risk}
 \psi(s,a) =  \underset{\hat{\beta}}{\inf} \underset{\beta \in \Omega_{s,a}}{\sup}\mathbf{E}_{\beta}\left( \| \hat{\beta} - \beta\|^{2} \right), 
\end{equation}
where the infimum is taken over all possible estimators $\hat{\beta}$ and $\|.\|$ is the Euclidean norm. The risk \eqref{eq:scaled_risk} was introduced in \cite{ndaoud18} where sharp lower bounds were given under model \eqref{eq:def}. More precisely, assuming that the design columns are normalized (i.e. $\|X_i\| = \sqrt{n}$ for all $i=1,\dots,p$) and that the noise $\xi$ is a standard random Gaussian vector, a combination of theorems $3$ and $4$ in \cite{ndaoud18} provides the following sharp lower bounds:
\[
\psi(s,a) \geq (1+o(1))\frac{2\sigma^2s\log(ep/s)}{n}, \quad \forall a\leq(1-\varepsilon)\sigma\sqrt{\frac{2\log(ep/s)}{n} }
\]
and
\[
\psi(s,a) \geq (1+o(1))\frac{\sigma^2 s}{n}, \quad \forall a\geq(1+\varepsilon)\sigma\sqrt{\frac{2\log(ep/s)}{n} },
\]
that holds for any $0<\varepsilon \leq 1$ and such that the limit corresponds to $s/p \to 0$. One of the main motivations of the present work is to provide matching upper bounds for the risk $\psi$ under mild assumptions on the design matrix $X$.

{\bf Notation.}
In the rest of this paper we use the following notation. For any integer $n$, $[n]$ denotes the set of integers $\{1,\dots,n\}$. For given sequences $a_{n}$ and $b_n$, we say that $a_{n} = O(b_{n})$ (resp $a_{n} = \Omega(b_n)$) when $a_{n} \leq c b_{n}$ (resp $a_{n} \geq c b_{n}$) for some absolute constant $c>0$. We write $a_{n} \asymp b_{n} $ if $a_{n} = O(b_{n})$ and $a_{n}=\Omega(b_{n})$ while $a_n = o(b_n)$ corresponds to $a_n/b_n \to 0$ as $n$ goes to infinity. For ${\bf x},{\bf y}\in \mathbf{R}^{p}$, $\|{\bf x}\|_{\infty}$ is the $\ell_{\infty}$ norm of ${\bf x}$, $\|{\bf x}\|$ the Euclidean norm of ${\bf x}$, and $\langle{\bf x},{\bf y}\rangle$ the corresponding inner product. For a matrix $X \in \mathbf{R}^{n \times p}$, we denote by $X_j$ its $j$-th column, and $\|X\|_{2,\infty}:= \underset{j=1,\dots,p}{\max}\|X_{j}\|$. For $x,y \in \mathbf{R}$, we denote by $x\vee y$ the maximum of $x$ and $y$ and we set $x_{+}=x \vee 0$.
The notation  $\mathbf{1}\{\cdot\}$ stands for the indicator function. For a vector $Z \in \mathbf{R}^{p}$, $Z_{(i)}$ denotes the $i$-th non increasing order statistic of $Z$ such that $Z_{(1)}\geq \dots\geq Z_{(p)}$. For any finite set $S$, $|S|$ stands for its length. For any $X \in \mathbf{R}^{p \times p}$ and $S,S' \in \{1,\dots,p\}$, $X_{S}$ will denote the submatrix of $X$ with columns indexed by $S$, and $X_{S'S}$ the submatrix of $X$ with columns indexed by $S$ and rows indexed by $S'$. For a vector $M \in \mathbf{R}^{p}$, $M_{S}$ is the restriction of $M$ to the set $S$. For an SDP matrix $A$, we denote by $\lambda_{\max}$(resp. $\lambda_{\min}$) the largest (resp. lowest) corresponding eigenvalue, and  by $\|A\|_{F}$ its Frobenius norm. $\mathbf{I}_{p}$ is the identity matrix in $\mathbf{R}^{p\times p}$. 

For the sake of readability of the results, we will assume the design columns to be normalized in the rest of this section , i.e. for all $i=1,\dots,p$ we have $\|X_i\| = \sqrt{n}$ .
\subsection{Related literature}\label{sec:22}
The literature on minimax sparse estimation in high-dimensional linear regression (for both random and orthogonal design) is very rich and its complete overview falls beyond the format of this paper. We mention here only some recent results close to our work. All sharp results are considered in the regime where $\frac{s}{p} \to 0$ and $s\log(ep/s)/n\to0$. 

\begin{enumerate}
    \item 
        \textit{Discrepancy
            between the minimax rate
            $2s\sigma^2\log(ep/s)/n$ and the oracle rate $\sigma^2s/n$.
        }
        In the last decade,
        the success of estimators in sparse linear regression
        has been characterized by the minimax
        rate $2s\sigma^2\log(ep/s)/n$, achieved for instance 
        by the popular Lasso.
        It is well known 
        by practitioners that Lasso suffers from non-negligible bias,
        and this bias is unavoidable and at least of order $\sigma^2\frac{s\log{(p/s)}}{n}$ (cf. \cite{bellec2018}). Other convex estimators,
        such as Slope, suffer the same bias issue
        as the bias lower bound in \cite{bellec2018} applies as well.
        On the other hand, the oracle least-squares estimator
        restricted to the support of the true $\beta$
        enjoys the smaller rate $\sigma^2 s /n$.
        The focus of the present paper is the discrepancy
        between the well studied minimax rate $2\sigma^2\log(ep/s)/n$
        and the oracle rate $\sigma^2s/n$, in particular
        \begin{quotation}
            \textit{For which sparse $\beta$ is it possible to achieve
                the oracle rate $\sigma^2s/n$ without the knowledge
                of the true support ?
                When possible, how to construct estimators
                that achieve the oracle rate $\sigma^2s/n$?
            }
        \end{quotation}
        When the magnitude of entries of the signal $\beta$ is large enough we may expect to get a better estimate than the Lasso by removing the associated bias. In \cite{ndaoud18}, the separation at which the bias can be removed, in the sense that the oracle $\sigma^2s/n$ can be achieved,
        is exactly characterized in the case of orthogonal design.
        This separation is given by the universal separation $a^* = \sigma\sqrt{\frac{2\log{(p/s)}}{n}}$. In particular, for $a$ larger than $a^*$, the estimation risk could be as small as $\sigma^2\frac{s}{n}$. For the case of general designs, the same paper shows that $a \geq a^{*}$ is necessary in order to remove the bias without providing corresponding sufficient conditions. The
        recent paper \cite{zadik} provide insight on the precise phase transition when the design is Gaussian and the sparse vector $\beta$ is binary; however the findings of the present paper
        reveal that $a=a^{*}$ is the threshold at which
        the transition between $2\sigma^2s\log(ep/s)/n$ and $\sigma^2s/n$
        occurs for the general class of sparse vectors.
    
        A popular approach to avoid the bias present in convex estimators 
        is Iterative Hard Thresholding (IHT) \cite{blum} and its variants. IHT is a non-convex counterpart of the gradient descent corresponding to the Lasso \cite{shen2017tight,yuan2018gradient,zhou2018,wang2015}. It is shown in \cite{yuan2016exact,yuan2018gradient} that debaising and support recovery is possible,  using Gradient Hard Thresholding Pursuit, under the sub-optimal condition $a \asymp \sigma \sqrt{\frac{s\log(ep/s)}{n}}$. Two other approaches for unbiased estimation are either through concave penalization \cite{zhang2010,zhang2017} or  forward and backward greedy algorithms \cite{foba}. 
        \begin{quotation}
        \textit{When the oracle rate $\sigma^2s/n$ is achievable
            information-theoretically,
            is it possible to achieve this rate in polynomial time under mild assumptions on the design?}
        \end{quotation}

    \item \textit{Fast convergence and adaptation.} IHT uses explicitly the sparsity parameter, since at each step it keeps the $s$ largest values of the gradient descent. In \cite{liu2018between} for instance, IHT is shown to achieve the minimax rate $\sigma^2 s \log(ep/s)/n$ 
        after a number of iterations of order $\log\left( \frac{n\|\beta\|^2}{s\log(ep/s)}\right)$
        which corresponds to the expected number of iterations under strong convexity assumptions on the loss. In \cite{fast}, algorithms to compute
        the Lasso are shown to converge geometrically under stronger conditions. FoBa \cite{foba} achieves unbiased estimation without the knowledge
        of $s$, but may take longer to converge. 
	This motivates the following question: 
        \begin{quotation}
            \textit{Is it possible to achieve the minimax rate
            $2\sigma^2s\log(ep/s)$ and the oracle rate $\sigma^2s/n$
        using fast iterative algorithms, without the knowledge of $s$?}
        \end{quotation}
	\item \textit{Sharpness.} Here we refer to sharpness
            for the problem of statistical convergence rates
            where the statistical accuracy matters up to
            exact multiplicative constants.
            It is inspired by statistical physics where phase transitions occur at some sharp threshold.
            The notion of sharp optimality is useful to compare algorithms in practice since the constants generally hide large values that may
            impact practical implementations. 
            In \cite{su2016}, SLOPE is shown to be sharply minimax optimal on the set of sparse vectors.
            Similarly, \cite{montanari} show sharp results of estimation for the debiased Lasso (with a post-processing step)
            that are adaptively optimal up to a logarithmic factor.
            Results in both \cite{candes} and \cite{montanari} hold, under the Gaussian design assumption, in the asymptotic where $s/p \to 0$ and $s\log(p)/n \to 0$. To the best of our knowledge, no sharp results for general designs that are not necessarily Gaussian are known. 
            \begin{quotation}
            \textit{Is it possible to extend sharp minimax results with beyond Gaussian designs,
        for instance to sub-Gaussian designs? }
            \end{quotation}
\end{enumerate}

In this paper, we shed some light on these issues. Specifically, we address the above questions in what follows.

\subsection{Main contribution}

The present work is mainly devoted to bridging the gap between statistical optimality and optimization in a sharp an adaptive way. The main novelty is a unified framework to analyze simultaneously estimation error, support recovery and optimization speed. 
Our objective is to build a procedure that would answer positively the  questions stated in Section \ref{sec:22}. The proposed method is an iterative hard thresholding algorithm where the threshold is updated at each step. For $\lambda>0$, we define the hard thresholding operator $\mathbf{T}_{\lambda}:\mathbf{R}^{p}\to\mathbf{R}^{p}$, such that
$$
\forall u \in \mathbf{R}^{p}, \forall j =1,\dots,p,\quad \mathbf{T}_{\lambda}(u)_{j} = u_{j}\mathbf{1}\{|u_{j}|\geq \lambda\} .
$$
We consider here a general class of IHT estimators. For a given sequence $(\lambda_{m})_{m}$ of positive numbers, we define the corresponding sequence of estimators $(\hat{\beta}^{m})_{m}$ such that $\hat{\beta}^{0}=0$ and for $m=1,2,\dots$
\begin{equation}
    \hat{\beta}^{m} = \mathbf{T}_{\lambda_{m}}\left(\hat{\beta}^{m-1} + \frac{1}{n}X^{\top}(Y-X\hat{\beta}^{m-1})\right).
\end{equation}
This procedure corresponds to a projected gradient descent on a non-convex set. Our thresholding procedure is, in some sense, an interpolation between the two classical thresholds, namely the largest $s$ component for IHT, and $\sigma\sqrt{\frac{2\log(p)}{n}}$ for the LASSO. We start with a large threshold, we then update it geometrically until hitting the statistical universal threshold. This gives further an explicit stopping time of our procedure that may be seen as an early stopping rule for gradient descent. Another perspective about our procedure is that it could be seen as what we describe later as an \textit{iteration selection} procedure. At each step we may see $\hat{\beta}^{m}$ as an estimator computed using one iteration starting from the one before $\hat{\beta}^{m-1}$. By analogy with a classical model selection criterion, we can select the iteration that is minimax optimal. This leads, in particular, to a faster adaptive procedure compared to model selection since each estimator is simply one iteration of our algorithm.  Our contribution can be summarized as follows:
\begin{itemize}
\item We derive a new proof strategy to construct adaptive minimax optimal estimators through algorithmic regularization. 
    This combines techniques from non-convex optimization and model selection. 
\item We propose a fully adaptive variant of IHT that is scaled minimax optimal (i.e. achieves optimal risk of the oracle that knows the sparsity pattern when possible). We also show optimal support recovery results for this procedure. To the best of our knowledge, our conditions improve upon the previously known ones to achieve support recovery for an IHT procedure. When $s\log(p)^3 / n \to 0$, optimal conditions for the problem of support recovery in high dimensional linear regression under Gaussian design are provided in \cite{gao2019iterative} using an iterative procedure without sample splitting. Similarly, our methodology does not require sample splitting. Moreover it achieves support recovery for a larger class of designs under a milder condition.
\item We establish sharp optimal results under RIP as $\delta \to 0$. This in particular holds for sub-Gaussian designs as $s\log(ep/s)/n \to 0$. To the best of our knowledge, those are the first sharp estimation results to hold beyond Gaussian  design.
\item As for the optimization part, we use local strong convexity/local smoothness of the loss function (equivalent in our setting to RIP) in order to get fast global convergence as in \cite{fast} for convex penalized estimators.  Our analysis makes it possible to study algorithms with non-convex penalization for instance the hard thresholding penalization. Using statistical properties of the model, we benefit from both local strong convexity and non-convex penalization 
    in order to
    provide optimal worst-case computational guarantees of our procedure. 
\end{itemize}
As a consequence of our methodology, we extend results of scaled minimax optimality to the regression model under RIP. In particular we close the gap by showing that
\[
\psi(s,a) = (1+o(1))\frac{2\sigma^2s\log(ep/s)}{n}, \quad \forall a\leq(1-\varepsilon)\sigma\sqrt{\frac{2\log(ep/s)}{n} }
\]
and
\[
\psi(s,a) = (1+o(1))\frac{\sigma^2 s}{n}, \quad \forall a\geq(1+\varepsilon)\sigma\sqrt{\frac{2\log(ep/s)}{n} },
\]
that holds for any $\varepsilon>0$ and such that the limit corresponds to $s/p \to 0$ and $\delta \to 0$. Moreover, the upper bound is achieved through a polynomial time method that is fully adaptive.
 Some interesting questions arise based on adaptation to parameters on both optimization and statistical sides that we address in the Conclusion. We summarize our contribution to the problem of minimax scaled sparse estimation below.
 \begin{center}
\scalebox{1}{\begin{tabular}{|c|c|c|c|}
    \hline 
     & & & \\
     & \makecell{Non asymptotic \\ results}  & \makecell{Sharp results \\ ($a \leq (1-\varepsilon) a^{*}$)} & \makecell{Sharp results \\ ($a \geq (1+\varepsilon)a^{*}$)} \\
     & & & \\
   \hline 
        & & & \\
Minimax lower bounds
& \makecell{$C_4\sigma^2\frac{s\log(ep/s)}{n}$ ~~ \cite{bellec}}
& \makecell{$2\sigma^2\frac{s\log(ep/s)}{n}$ ~~ \cite{ndaoud18}}
& \makecell{$\sigma^2\frac{s}{n}$ ~~ \cite{ndaoud18}} \\
     & & & \\
     \hline 
        & & & \\
   \makecell{Risk of LASSO \\ (not adaptive to sparsity)} &
   \makecell{$C_1\sigma^2\frac{s\log(ep/s)}{n}$ ~~ \cite{bellec}} 
   & \makecell{$2\sigma^2\frac{s\log(ep/s)}{n}$ ~~ \cite{montanari}}
& \makecell{$2\sigma^2\frac{s\log(ep/s)}{n}$ ~~ \cite{bellec2018}} \\
     & & & \\
\hline 
     & & & \\
\makecell{Risk of SLOPE \\ (adaptive to sparsity)}
& \makecell{$C_2\sigma^2\frac{s\log(ep/s)}{n}$ ~~ \cite{bellec}}
& \makecell{$2\sigma^2\frac{s\log(ep/s)}{n}$ ~~ \cite{candes}}
& \makecell{$2\sigma^2\frac{s\log(ep/s)}{n}$ ~~ \cite{bellec2018}} \\
     & & & \\
\hline 
     & & & \\
RISK of adaptive IHT
& \makecell{$C_3\sigma^2\frac{s\log(ep/s)}{n}$ \\ 
    {\small This paper, Theorem \ref{thm:optimal:statistic:adap2}} }
& \makecell{$2\sigma^2\frac{s\log(ep/s)}{n}$ \\ 
{\small this paper, Theorem \ref{thm:sharp:1}}
}
& \makecell{$\sigma^2\frac{s}{n}$ 
   \\ {\small this paper, Theorem \ref{thm:sharp:2}}
} \\
     & & & \\
     \hline 
\end{tabular}}
\captionof{table}{Summary of minimax upper and lower bounds for estimation in high dimensional linear regression where $a^{*} = \sigma\sqrt{\frac{2\log(ep/s)}{n}}$ and $C_1,C_2,C_3,C_4>0$ some absolute constants. Sharp results hold for any $0<\varepsilon<1$.}
\end{center}

\section{Non-asymptotic minimax sparse estimation: A new proof strategy}\label{sec:3}
Classical non-asymptotic minimax results for sparse estimation in linear regression are proved for minimizers of well defined objective loss functions. In this section, we present and analyze our variant of iterative hard thresholding algorithm, and show similar minimax results.  In what follows we assume that the design $X$ satisfies the following condition. For an integer $s =1,\dots,p$, define $L_{s},m_{s}>0$ such that
$$
L_{s}  = \underset{|S|=s}{\max} \lambda_{\max}( X_{S}^{\top}X_{S}),
$$
and
$$
m_{s}  = \underset{|S|=s}{\min} \lambda_{\min} \left( X_{S}^{\top}X_{S} \right).
$$
Set  $\delta_{s}:= 1-\frac{m_{s}}{L_{s}}$. 
\begin{ass}\label{ass:1}
For $0<c<1$ and $s\in[p]$, we say that $X$ satisfies RIP($s,c$) if 
$$
\delta_{s} \leq c.
$$
\end{ass}
In the rest of the paper, we assume that $s\leq p/3$ and that $X$ satisfies RIP($3s$,$\delta/2$) for some $0 \leq \delta<1$. Assumption \ref{ass:1} is equivalent to Restricted Strong Convexity and Restricted Smoothness on the set of $s$-sparse vectors, where we assume that $\gamma_s:= \frac{L_s}{m_s}$, the \textit{condition number}, is bounded. Here are few remarks concerning this assumption with respect to adaptation. \begin{itemize}
    \item Although our results should hold for general $\gamma_s$, we decided to consider only the case of bounded $\gamma_s$, and omit the dependence on $\gamma_s$, to make the presentation of our results simpler and also since a fully adaptive procedure would require an upper bound on $\gamma_s$ . 
    \item Our results hold under a relaxed assumption on the design where there exits some matrix $M$ such that $MX^{\top}X$ satisfies a condition similar to RIP as in \cite{montanari}. For instance, in the case of general Gaussian design with full rank covariance $\Sigma$, and for $M$ chosen to be $\Sigma^{-1}$, the condition would hold under the usual assumption $s\log(ep/s) = O(n)$. Again, this requires knowing $M$ in advance and constrains adaptation.
    \item For gradient descent step in IHT, the step size depends on $\gamma_s$ or more precisely on $L_s$. When no upper bound on $\gamma_s$ is unknown, there exist adaptive choices of the step size based on the exact line search for instance.  
\end{itemize}  
For all above reasons, we decided to only focus on adaptivity with respect to statistical parameters of the problem, namely $s, \sigma$ and $\|\beta\|$, and to leave the general case with other results for further research. 

 Our procedure is an iterative hard thresholding algorithm where the threshold is updated at each step. For $\lambda>0$, define the hard thresholding operator $\mathbf{T}_{\lambda}:\mathbf{R}^{p}\to\mathbf{R}^{p}$, such that
$$
\forall u \in \mathbf{R}^{p}, \forall j =1,\dots,p,\quad \mathbf{T}_{\lambda}(u)_{j} = u_{j}\mathbf{1}\{|u_{j}|\geq \lambda\} .
$$
Notice that the usual IHT algorithm corresponds to $\mathbf{T}_{u_{(s)}}(u)$ where $u_{(s)}$ is the $s$ largest entry of $u$, hence the threshold is data-dependent but most importantly sparsity dependent \cite{blum}. For a given sequence $(\lambda_{m})_{m}$ of positive numbers, we define the corresponding sequence of estimators $(\hat{\beta}^{m})_{m}$ such that $\hat{\beta}^{0}=0$ and for $m=1,2,\dots$
\begin{equation}\label{eq:estimator_non_adap}
    \hat{\beta}^{m} = \mathbf{T}_{\lambda_{m}}\left(\hat{\beta}^{m-1} + \frac{1}{\|X\|_{2,\infty}^{2}}X^{\top}(Y-X\hat{\beta}^{m-1})\right).
\end{equation}
This procedure corresponds to a projected gradient descent on a non-convex set. The choice of normalizing the gradient by $\|X\|_{2,\infty}$ instead of $L_{3s}$ is due to the fact that $L_{3s}$ is not tractable and that we do not consider adaptivity with respect to optimization parameters $(L_{3s},\delta_{3s})$ in this work. If $\delta_{3s}$ is small enough, it is easy to see that $\|X\|_{2,\infty}$ is a good proxy for $L_{3s}$. The usual projection step consists in keeping the largest $s$ components. The operator $\mathbf{T}_{\lambda}$ plays a similar role here. Imposing sparsity at each step of the procedure is crucial in order to benefit from restricted properties of the design. The novelty of our procedure lies in the fact that it implicitly grants the sparsity of our estimator at each step without having to choose exactly $s$ components.

Unlike the analysis of IHT, in previous works, that benefits from local convex properties of the objective function, we choose to directly analyze the non-convex gradient descent algorithm and leverage the structure of both the signal and design in order to get a contraction of the error. We give here the intuition behind our procedure. In what follows we propose a specific choice for the sequence of thresholds $(\lambda_{m})_{m}$ that will allow us to achieve both optimal statistical accuracy and fast convergence of the algorithm in an adaptive way. Let $\lambda_0, \lambda_\infty>$ and $0<\kappa<1$ be given constants, we define the sequence $(\lambda_{m})_{m}$ as follows 
\begin{equation}\label{eq:def:threshold}
    \lambda_{m} = \kappa^{m/2} \lambda_0 \vee \lambda_\infty,\quad m=0,1,\dots.
\end{equation}
The sequence of thresholds starts at some very large threshold $\lambda_0$, then keeps updating it linearly until reaching a final threshold given by $\lambda_\infty$. A good choice of $\lambda_\infty$ is given by the universal statistical threshold $\sqrt{\frac{2\sigma^2 \log(ep/s)}{\|X\|^2_{2,\infty}}}$. Our choice of the thresholding sequence is motivated by the following. Observe that
\begin{align}\label{eq:simple}
\hat{\beta}^{m} &+ \frac{1}{\|X\|_{2,\infty}^{2}}X^{\top}(Y-X\hat{\beta}^{m}) =\\
&\beta + \underbrace{\left(\frac{1}{\|X\|_{2,\infty}^{2}}X^{\top}X - \mathbf{I}_{p} \right)(\beta-\hat{\beta}^{m})}_{\text{optimization error}}
+ \underbrace{\frac{\sigma}{\|X\|_{2,\infty}^{2}}X^{\top}\xi}_{\text{statistical error}}. \notag
\end{align}
At each step, we can decompose the estimation error into two parts. An optimization error that may be reduced thanks to the local contraction of the design (RIP), and a statistical error that is unavoidable. While it is well understood that the choice of a threshold of order $\sqrt{\frac{2\sigma^2\log(ep/s)}{\|X\|^2_{2,\infty}}}$ is optimal in order to control the statistical error, the same threshold does not grant sparsity of the estimator at first steps. In fact, if the signal $\beta$ is ``well-spread" (i.e. its coordinates share similar magnitude) and $\|\beta\|$ large enough, it may occur that we select too many coordinates at the first step. This lack of sparsity makes it hard to benefit from restricted properties of the design. The alternative choice of a very large threshold grants sparsity but leads to a high statistical error. The usual choice of keeping the largest $s$-components at each step, is a natural fix. This intuition is similar to the motivation behind the LARS algorithm \cite{lars}. Our thresholding procedure is, in some sense, an interpolation between the two classical thresholds, namely the largest $s$-component for IHT, and $\sqrt{\frac{2\sigma^2\log(ep/s)}{\|X\|^2_{2,\infty}}}$ for LASSO. We start with a threshold large enough, then as we move forward the optimization error gets smaller which allows us to update the threshold without loosing the contraction. Our choice of thresholding sequence gives also an explicit stopping time, as we may stop once the threshold hits the universal statistical threshold. In the rest of the paper, we use the following notation:
\[
     \Xi:=\frac{\sigma}{\|X\|_{2,\infty}^{2}}X^{\top}\xi \quad \text{and} \quad 
   \Phi:= \left(\frac{1}{\|X\|^{2}_{2,\infty}}X^{\top}X - \mathbf{I}_{p} \right).
\]
It is useful to observe that as long as $X$ satisfies  RIP($3s$,$\delta/2$), then $\Phi$ is a contraction for $3s$-sparse vectors. This is rephrased in the following Lemma that we prove in the Appendix.
\begin{lem}\label{lem:contraction}
If $X$ satisfies RIP($3s$,$\delta/2$) then for all $S \subset \{1,\dots,p\}$ such that $|S|\leq 3s$, we have $\lambda_{\max}(\Phi_{SS})\leq \delta$.
\end{lem}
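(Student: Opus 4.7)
The plan is a direct linear-algebra calculation. Fix $S \subset \{1,\dots,p\}$ with $|S|\le 3s$ and write
\[
\Phi_{SS} \;=\; \frac{1}{\|X\|_{2,\infty}^{2}}\, X_S^\top X_S \,-\, \mathbf{I}_{|S|},
\]
so that
\[
\lambda_{\max}(\Phi_{SS}) \;=\; \frac{\lambda_{\max}(X_S^\top X_S)}{\|X\|_{2,\infty}^{2}} \,-\, 1.
\]
It then suffices to bound the numerator above and the denominator below in terms of $L_{3s}$ and $m_{3s}$, respectively, and to invoke the RIP hypothesis.

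For the numerator I would use Cauchy interlacing: picking any $T \supset S$ with $|T|=3s$ gives $\lambda_{\max}(X_S^\top X_S) \le \lambda_{\max}(X_T^\top X_T) \le L_{3s}$. For the denominator, each column satisfies $\|X_j\|^2 = \lambda_{\min}(X_{\{j\}}^\top X_{\{j\}}) \ge m_1$, and the same interlacing argument shows that $m_k$ is non-increasing in $k$, hence $m_1 \ge m_{3s}$. Taking the maximum over $j$ then yields $\|X\|_{2,\infty}^{2} \ge m_{3s}$. Combining these two bounds,
\[
\lambda_{\max}(\Phi_{SS}) \;\le\; \frac{L_{3s}}{m_{3s}} - 1 \;=\; \frac{\delta_{3s}}{1-\delta_{3s}}.
\]

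Finally I would apply RIP$(3s,\delta/2)$, i.e.\ $\delta_{3s}\le \delta/2$, to conclude
\[
\lambda_{\max}(\Phi_{SS}) \;\le\; \frac{\delta/2}{1-\delta/2} \;\le\; \delta,
\]
the last inequality being equivalent to $\delta \le 1$ after rearrangement. There is no substantive obstacle here; the only point worth flagging is the mild slack between the assumed RIP constant $\delta/2$ and the target bound $\delta$, which is exactly what the final step $\frac{\delta/2}{1-\delta/2}\le \delta$ consumes, and which explains why the paper phrases the standing hypothesis with $\delta/2$ rather than $\delta$.
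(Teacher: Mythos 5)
Your calculation is correct as far as it goes, and on the positive side of the spectrum it is essentially the paper's argument: both reduce matters to $\frac{L_{3s}}{m_{3s}}-1=\frac{\delta_{3s}}{1-\delta_{3s}}\leq\frac{\delta/2}{1-\delta/2}\leq\delta$, using $\|X\|_{2,\infty}^{2}\geq m_{3s}$ and $\delta\leq 1$. The gap is that you only control the largest \emph{signed} eigenvalue. The matrix $\Phi_{SS}=\|X\|_{2,\infty}^{-2}X_S^{\top}X_S-\mathbf{I}_{|S|}$ is symmetric but not positive semi-definite, and since $\|X\|_{2,\infty}^{2}$ may exceed $\lambda_{\max}(X_S^{\top}X_S)$, the eigenvalue of $\Phi_{SS}$ largest in magnitude can be the negative one, $\lambda_{\min}(\Phi_{SS})=\lambda_{\min}(X_S^{\top}X_S)/\|X\|_{2,\infty}^{2}-1<0$, which your identity does not see. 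The paper's proof is two-sided: it first notes $m_{3s}\leq\|X\|_{2,\infty}^{2}\leq L_{3s}$ (the upper bound holds because each $\|X_j\|^{2}$ is a diagonal entry of some $X_T^{\top}X_T$ with $|T|=3s$) and then bounds $\lambda_{\max}(\Phi_{SS})\leq\bigl(1-\tfrac{m_{3s}}{L_{3s}}\bigr)\vee\bigl(\tfrac{L_{3s}}{m_{3s}}-1\bigr)$, the first term taking care of the negative part and being $\leq\delta/2$ directly from RIP. This matters because the lemma is invoked later (e.g.\ in the proof of Theorem \ref{thm:main} and in the sharp results) to obtain the contraction $\sqrt{\sum_{i\in\tilde S}\langle\Phi_i^{\top},\beta-\hat\beta^{m}\rangle^{2}}\leq\delta\|\beta-\hat\beta^{m}\|$, i.e.\ an operator-norm bound on principal submatrices of $\Phi$ of size at most $3s$, which requires controlling $|\lambda_{\min}(\Phi_{SS})|$ as well.

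The fix is one line and uses the inequality you did not exploit, namely $\|X\|_{2,\infty}^{2}\leq L_{3s}$: by the same interlacing, $\lambda_{\min}(\Phi_{SS})\geq \frac{m_{3s}}{\|X\|_{2,\infty}^{2}}-1\geq\frac{m_{3s}}{L_{3s}}-1=-\delta_{3s}\geq-\delta/2$, so that all eigenvalues of $\Phi_{SS}$ lie in $[-\delta/2,\delta]$ and the spectral norm is at most $\delta$. With that addition your proof matches the paper's.
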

Before stating our results we give a first result that is relevant to the analysis of our algorithm. We draw the reader's attention, that our analysis is fully deterministic since we place ourselves in a well chosen random event that captures the complexity of our model. Namely, we consider the event 
$$
\mathcal{O} = \left\{ \sum_{i=1}^{s}\Xi_{(i)}^{2} \leq \frac{10\sigma^{2}s\log(ep/s)}{\|X\|_{2,\infty}^{2}} \right\}.
$$
Using Lemma \ref{lem:order:statistic} (cf. Appendix), the event $\mathcal{O}$ holds with high probability. Conditionally on the event $\mathcal{O}$, the next Theorem shows that, at each step, the corresponding estimator is $2s$-sparse and the surrogate function of the estimation error, given by $s\lambda_m^2$, decreases exponentially.   
\begin{thm}\label{thm:main}
Assume that $\beta$ is $s$-sparse, that is $|\beta|_{0}\leq s$ and that $X$ satisfies RIP($3s$,$\delta/2$). We denote by $S$ the support of $\beta$. Let $\lambda_0,\lambda_\infty>0$, $0<\kappa<1$, and define $(\hat{\beta}^m)_m$ and its corresponding thresholding sequence $(\lambda_{m})_{m}$ as in \eqref{eq:estimator_non_adap}-\eqref{eq:def:threshold}. Assume that $\delta < 1/36 \vee \kappa$, $\|\beta\| \leq \sqrt{s} \lambda_0$ and $\frac{\sigma\sqrt{40\log(ep/s)}}{\|X\|_{2,\infty}} \leq \lambda_{\infty}$. If $\mathcal{O}$ holds, then for all $m$, we have
\begin{equation}\label{eq:main:1}
    |\hat{\beta}^{m}_{S^{c}}|_{0} \leq s,
\end{equation}
and
\begin{equation}\label{eq:main:2}
    \|\hat{\beta}^{m} - \beta\|^{2} \leq 9s\lambda_{m}^{2}.
\end{equation}
\end{thm}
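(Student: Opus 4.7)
The plan is to prove \eqref{eq:main:1} and \eqref{eq:main:2} jointly by induction on $m$. The base case $m=0$ is immediate: $\hat{\beta}^{0}=0$ gives $|\hat{\beta}^{0}_{S^{c}}|_{0}=0\leq s$, and the hypothesis $\|\beta\|\leq \sqrt{s}\lambda_{0}$ yields $\|\hat{\beta}^{0}-\beta\|^{2}=\|\beta\|^{2}\leq s\lambda_{0}^{2}\leq 9s\lambda_{0}^{2}$. For the inductive step I rely on the decomposition \eqref{eq:simple}: setting
$$u := \hat{\beta}^{m}+\tfrac{1}{\|X\|_{2,\infty}^{2}}X^{\top}(Y-X\hat{\beta}^{m})=\beta+\Phi v+\Xi, \qquad v := \beta-\hat{\beta}^{m},$$
one has $\hat{\beta}^{m+1}=\mathbf{T}_{\lambda_{m+1}}(u)$. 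The key structural observation is that, under the induction hypothesis \eqref{eq:main:1}, the support $T := S\cup\operatorname{supp}(\hat{\beta}^{m})$ of $v$ satisfies $|T|\leq 2s$; this leaves exactly the room to adjoin a further size-$s$ set and still apply Lemma \ref{lem:contraction} on a set of cardinality at most $3s$.

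For the sparsity claim \eqref{eq:main:1} I argue by contradiction. Let $A_{m+1} := \{j\in S^{c}: |u_{j}|\geq \lambda_{m+1}\}$ and assume $|A_{m+1}|\geq s+1$; pick any $A'\subset A_{m+1}$ with $|A'|=s$. On one hand $\sum_{j\in A'}u_{j}^{2}\geq s\lambda_{m+1}^{2}$. On the other, the elementary inequality $(a+b)^{2}\leq 2a^{2}+2b^{2}$ yields $\sum_{j\in A'}u_{j}^{2}\leq 2\|[\Phi v]_{A'}\|^{2}+2\|\Xi_{A'}\|^{2}$. Since $|A'\cup T|\leq 3s$, Lemma \ref{lem:contraction} gives $\|[\Phi v]_{A'}\|\leq \delta\|v\|$, which via the inductive energy bound becomes $\|[\Phi v]_{A'}\|^{2}\leq 9\delta^{2}s\lambda_{m}^{2}$. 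Because $|A'|=s$, on the event $\mathcal{O}$ we have $\|\Xi_{A'}\|^{2}\leq \sum_{i=1}^{s}\Xi_{(i)}^{2}\leq 10\sigma^{2}s\log(ep/s)/\|X\|_{2,\infty}^{2}$. Combining and dividing by $s$ produces $\lambda_{m+1}^{2}\leq 18\delta^{2}\lambda_{m}^{2}+20\sigma^{2}\log(ep/s)/\|X\|_{2,\infty}^{2}$, which I then contradict using the actual value of $\lambda_{m+1}^{2}$ in both regimes: either $\lambda_{m+1}^{2}=\kappa\lambda_{m}^{2}$, in which case $\lambda_{m}^{2}\geq \lambda_{\infty}^{2}/\kappa$, or $\lambda_{m+1}^{2}=\lambda_{\infty}^{2}\geq 40\sigma^{2}\log(ep/s)/\|X\|_{2,\infty}^{2}$, using the smallness of $\delta$ required by the theorem.

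For the energy bound \eqref{eq:main:2} at step $m+1$, once \eqref{eq:main:1} is secured the vector $\hat{\beta}^{m+1}$ is $2s$-sparse, and a coordinate-wise split gives
$$\|\hat{\beta}^{m+1}-\beta\|^{2}=\sum_{j\in S}(\hat{\beta}^{m+1}_{j}-\beta_{j})^{2}+\sum_{j\in A_{m+1}}u_{j}^{2}.$$
For $j\in S$ with $|u_{j}|<\lambda_{m+1}$ the triangle inequality gives $|\beta_{j}|\leq |u_{j}|+|u_{j}-\beta_{j}|$, hence $\beta_{j}^{2}\leq 2\lambda_{m+1}^{2}+2(u_{j}-\beta_{j})^{2}$; summing bounds the $S$-part by $2\|u-\beta\|_{S}^{2}+2s\lambda_{m+1}^{2}$. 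Applying Lemma \ref{lem:contraction} to both $S\cup T$ (of size $\leq 2s$) and $A_{m+1}\cup T$ (of size $\leq 3s$), together with $\mathcal{O}$ (since $|S|,|A_{m+1}|\leq s$) and the induction hypothesis for $\|v\|^{2}$, I obtain
$$\|\hat{\beta}^{m+1}-\beta\|^{2}\;\leq\; 54\delta^{2} s\lambda_{m}^{2}+60\sigma^{2}s\log(ep/s)/\|X\|_{2,\infty}^{2}+2s\lambda_{m+1}^{2},$$
and the same two-regime calibration then shows the right-hand side is at most $9s\lambda_{m+1}^{2}$, closing the induction.

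The main obstacle will be calibrating the numerical constants so that both the sparsity and the energy bound close under one common condition on $(\delta,\kappa,\lambda_{\infty})$, and so that the contraction survives both the geometric-decay regime $\lambda_{m+1}^{2}=\kappa\lambda_{m}^{2}$ and the noise-floor regime $\lambda_{m+1}^{2}=\lambda_{\infty}^{2}$. A subtle point is that the argument uses RIP only at order $3s$: this is tight and requires exploiting the sharpened induction fact $|T|\leq 2s$ (rather than the naive $3s$), which is precisely what leaves the necessary budget to adjoin a test set of size $s$ while keeping the Lemma \ref{lem:contraction}-based contraction available.
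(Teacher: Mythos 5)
Your proposal is correct and takes essentially the same route as the paper's proof: induction on $m$ with the joint hypothesis ($|\hat{\beta}^m_{S^c}|_0\leq s$ and $\|\hat{\beta}^m-\beta\|^2\leq 9s\lambda_m^2$), the decomposition $u=\beta+\Phi(\beta-\hat{\beta}^m)+\Xi$, a size-$s$ contradiction set in $S^c$ controlled by Lemma \ref{lem:contraction} on sets of cardinality at most $3s$ together with the event $\mathcal{O}$, and the $S$/$S^c$ split with the calibration $\lambda_{m+1}\geq \sqrt{\kappa}\,\lambda_m \vee \lambda_\infty$. The only difference is cosmetic: the paper works with unsquared $\ell_2$ triangle inequalities (yielding $1/2+3\sqrt{\delta}<1$ and $2+6\sqrt{\delta}\leq 3$) whereas you square via $(a+b)^2\leq 2a^2+2b^2$, and your resulting constants ($18\delta^2$, $54\delta^2$, etc.) indeed close under $\delta<1/36$ and $\delta\leq\kappa$, just as in the paper.
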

Unlike for  convex  regularized least squares, the objective function $\| \hat{\beta}^m - \beta\|^2$ does not decrease at each step. Indeed, our gradient descent step may be trapped in some saddle points for instance when there is a big gap between coordinates of $\beta$. We get around this issue by finding an upper bounding surrogate function that decreases exponentially. This can be viewed as the highlight of our non-convex approach. The sequence $\lambda_{m}$ decreases until it reaches the stationary threshold $\lambda_{\infty}$. If we tune $\lambda_\infty$ with the universal statistical threshold then the final estimation error is optimal. In that case, we may stop the algorithm after a number of steps of order $\log\left(\frac{s\lambda_0^{2}\|X\|_{2,\infty}^{2}}{\sigma^{2} s\log(ep/s)}\vee2\right) / \log(1/\kappa) $. Notice that we do not need to know the precise value of $\delta$ here but only an upper bound, that we set to $1/36$ in Theorem \ref{thm:main}. We did not try to optimize this value. We may also set $\lambda_0$ as large as possible and our algorithm would still output an estimator that attains the minimax optimal rate. If $s\lambda_0^{2}$ is much larger than $\|\beta\|^{2}$, then the result of Theorem \ref{thm:main} is not optimal from an optimization perspective, in the sense that it would take more steps to stop compared to IHT for instance. In order to achieve both optimal statistical accuracy and optimal fast convergence, we need $\lambda_0$ to be roughly of the same order as $\frac{\|\beta\|}{\sqrt{s}} \vee \sigma\frac{\sqrt{\log(ep/s)}}{\|X\|_{2,\infty}}$. The optimal choices of $\lambda_0$ and the stopping time $m$ depend on $\|\beta\|$. In order to derive minimax optimal results, we need to make these choices adaptive with respect to $\|\beta\|$.
We show next how to tune $\lambda_0$ and $m$ in order to grant linear convergence of our procedure. Denote by $M$ the vector 
$$M:=\frac{1}{\|X\|_{2,\infty}^{2}}X^{\top}Y = \beta + \Phi\beta + \Xi,
$$
and set 
\begin{equation}\label{eq:threshold:initial}
    \hat{\lambda}_0 =\sqrt{\frac{10\sum_{i=1}^{s}M_{(i)}^{2}}{s}} \vee\frac{\sigma}{\|X\|_{2,\infty}} \sqrt{40\log(ep/s)},
\end{equation}
and
\begin{equation}\label{eq:stop}
    \hat{m} = \left\lfloor  2\log\left(\frac{ \hat{\lambda}_0^{2} \|X\|_{2,\infty}^{2}}{ 40\sigma^{2}\log(ep/s)} \right)/\log(1/\kappa) \right\rfloor+1.
\end{equation}
\begin{prop}\label{prop:threshold:initial}
Let $\beta$ be $s$-sparse and let $X$ satisfy RIP($3s$,$\delta/2$). Assume that $\delta \leq 1/4$ and that event $\mathcal{O}$ holds. Then
$$
 \left(\|\beta\|\vee \frac{\sigma\sqrt{10s\log(ep/s)}}{\|X\|_{2,\infty}}\right) \leq \sqrt{s}\hat{\lambda}_0  \leq 10\left(\|\beta\|\vee \frac{\sigma\sqrt{10s\log(ep/s)}}{\|X\|_{2,\infty}}\right),
$$
and
$$
  2\log\left(\frac{ \|\beta\|^2 \|X\|_{2,\infty}^{2}}{ 40\sigma^{2}s\log(ep/s)}\vee 1/4 \right)/\log(1/\kappa)  \leq \hat{m} - 1 \leq 2\log\left(\frac{ 5\|\beta\|^2 \|X\|_{2,\infty}^{2}}{ \sigma^{2}s\log(ep/s)}\vee 25 \right)/\log(1/\kappa).
$$
\end{prop}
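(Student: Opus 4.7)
The plan is to decompose $M=\beta+\Phi\beta+\Xi$ and exploit Lemma~\ref{lem:contraction} together with the noise control built into the event $\mathcal{O}$. Set $\eta:=\frac{\sigma\sqrt{10s\log(ep/s)}}{\|X\|_{2,\infty}}$, so that by construction $\sqrt{s}\hat{\lambda}_0=\sqrt{10\sum_{i=1}^s M_{(i)}^2}\vee 2\eta$; in particular $\sqrt{s}\hat{\lambda}_0\geq 2\eta\geq \eta$, which already handles the noise side of the lower bound. Two deterministic consequences of $\mathcal{O}$ are worth recording: $\|\Xi_T\|^2\leq \eta^2$ whenever $|T|\leq s$, and by the elementary rearrangement $\sum_{i=s+1}^{2s}\Xi_{(i)}^2\leq \sum_{i=1}^s \Xi_{(i)}^2$, also $\|\Xi_T\|^2\leq 2\eta^2$ whenever $|T|\leq 2s$.

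The upper bound comes first. Let $S^{\ast}$ denote the indices of the $s$ largest coordinates of $|M|$, and set $T=S\cup S^{\ast}$, so $|T|\leq 2s\leq 3s$. Since the support $S$ of $\beta$ satisfies $S\subseteq T$, the identity $M_T=\beta+(\Phi\beta)_T+\Xi_T$, the triangle inequality, Lemma~\ref{lem:contraction} applied on $T$, and the noise bound above yield
\[
    \|M_{S^{\ast}}\|\leq \|M_T\|\leq (1+\delta)\|\beta\|+\sqrt{2}\,\eta.
\]
Multiplying by $\sqrt{10}$, taking the max with $2\eta$, and plugging in $\delta\leq 1/4$ then gives $\sqrt{s}\hat{\lambda}_0\leq 10(\|\beta\|\vee \eta)$ after routine arithmetic.

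For the remaining lower bound $\sqrt{s}\hat{\lambda}_0\geq \|\beta\|$ I split into cases. When $\|\beta\|\leq 2\eta$, it follows immediately from $\sqrt{s}\hat{\lambda}_0\geq 2\eta$. When $\|\beta\|>2\eta$, I use
\[
    \sqrt{s}\hat{\lambda}_0\geq \sqrt{10}\|M_{S^{\ast}}\|\geq \sqrt{10}\|M_S\|\geq \sqrt{10}\bigl(\|\beta\|-\|(\Phi\beta)_S\|-\|\Xi_S\|\bigr).
\]
The main obstacle is obtaining a tight enough contraction constant here: the bound $\|\Phi_{SS}\|_{\mathrm{op}}\leq \delta$ afforded by Lemma~\ref{lem:contraction} alone is too loose for the inequality $\sqrt{10}((1-\delta)\|\beta\|-\eta)\geq \|\beta\|$ to close at $\delta=1/4$ when $\|\beta\|$ is just above $2\eta$. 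Inspecting the proof of Lemma~\ref{lem:contraction} more carefully, one separates $\lambda_{\max}$ from $\lambda_{\min}$ to obtain the sharper estimates $\lambda_{\max}(\Phi_{SS})\leq \delta/(2-\delta)$ and $\lambda_{\min}(\Phi_{SS})\geq -\delta/2$, hence $\|\Phi_{SS}\|_{\mathrm{op}}\leq \delta/(2-\delta)$. With $\delta\leq 1/4$ this gives $\|M_S\|\geq (6/7)\|\beta\|-\eta$, which combined with $\|\Xi_S\|\leq \eta<\|\beta\|/2$ yields $\sqrt{10}\|M_S\|\geq (5\sqrt{10}/14)\|\beta\|\geq \|\beta\|$, closing the argument.

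The bounds on $\hat{m}-1$ reduce to pure algebra. The key identity is $\frac{\hat{\lambda}_0^2\|X\|_{2,\infty}^2}{40\sigma^2\log(ep/s)}=\frac{s\hat{\lambda}_0^2}{4\eta^2}$; substituting the just-proven two-sided bound on $s\hat{\lambda}_0^2$ into \eqref{eq:stop} and using $\lfloor x\rfloor\in[x-1,x]$ to absorb the floor then produces the claimed two-sided bound on $\hat{m}-1$ after simplifying logarithms.
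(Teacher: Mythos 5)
Your proof is correct and follows the same skeleton as the paper's: decompose $M=\beta+\Phi\beta+\Xi$, control the noise term through the event $\mathcal{O}$, control $\Phi\beta$ through Lemma \ref{lem:contraction}, bound $\|M\|$ restricted to $s$-type sets from above for the upper bound on $\hat{\lambda}_0$, and bound $\|M_S\|$ from below with the case split at $\|\beta\|=2\eta$ for the lower bound. The one substantive difference is exactly the point you flagged: the paper's lower-bound step reads $\sqrt{s}\hat{\lambda}_0\geq \sqrt{10}\,(1-\delta-1/2)\|\beta\|\geq\|\beta\|$, which at $\delta=1/4$ only gives $\tfrac{\sqrt{10}}{4}\|\beta\|\approx 0.79\,\|\beta\|$, i.e. the paper's own constant closes only for $\delta\leq 1/2-1/\sqrt{10}\approx 0.18$. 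Your repair — reopening the proof of Lemma \ref{lem:contraction} to get $\lambda_{\max}(\Phi_{SS})\leq \delta/(2-\delta)$ and $\lambda_{\min}(\Phi_{SS})\geq -\delta/2$, hence $\|\Phi_{SS}\|_{\mathrm{op}}\leq \delta/(2-\delta)\leq 1/7$, giving $\sqrt{10}\,(6/7-1/2)=5\sqrt{10}/14>1$ — is valid and is what makes the claimed inequality hold on the full stated range $\delta\leq 1/4$; this is a genuine improvement on the paper's computation rather than a detour. Your upper bound differs only cosmetically (you enlarge to $T=S\cup S^{\ast}$ and pay $\sqrt{2}\eta$ for the noise, the paper bounds an arbitrary $s$-set directly with $\|\beta_{\tilde S}\|\leq\|\beta\|$ and $\|\Xi_{\tilde S}\|\leq\eta$); both land comfortably below the constant $10$. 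One caveat: both you ("pure algebra") and the paper ("straightforward") gloss over the $\hat{m}$ bounds, and the lower bound there is the delicate one — writing $\lfloor x\rfloor\geq x-1$ amounts to multiplying the argument of the logarithm by $\sqrt{\kappa}$, so the "$\vee\,1/4$" absorbs the floor only when $\|\beta\|\lesssim \eta$ or $\kappa$ is not too small, while in the regime $\|\beta\|>2\eta$ the only slack is your factor $250/196$ in $s\hat{\lambda}_0^2\geq \tfrac{250}{196}\|\beta\|^2$, which covers the floor only for $\kappa$ bounded away from $0$. This looseness is inherited from the Proposition as stated (downstream the paper only uses the upper bound on $\hat m$ and the relation $\kappa^{\hat m/2}\hat{\lambda}_0^2\leq 40\sigma^{2}\log(ep/s)/\|X\|_{2,\infty}^{2}$), so it is not a defect specific to your write-up, but "routine" oversells that step.
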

Observe that for large values of $\|\beta\|$, $\hat{\lambda}_0$ is of the same order as $\frac{\|\beta\|}{\sqrt{s}}$ with high probability and that $\hat{m}$ is of order $\log\left(\frac{ \|\beta\|^2 \|X\|_{2,\infty}^{2}}{ \sigma^{2}s\log(ep/s)} \right)/\log(1/\kappa)$. We are now ready to state a result of the minimax optimality of our procedure. We will also pick $\lambda_\infty$ to be of the same order as the universal threshold
\begin{equation}\label{eq:lamda:unviersal}
\hat{\lambda}_{\infty} = \frac{\sigma\sqrt{40\log(ep/s)}}{\|X\|_{2,\infty}}.
\end{equation}
\begin{thm}\label{thm:optimal:statistic}
Let $0<\kappa<1$. Assume that $\delta \leq 1/36 \vee \kappa$ and that $X$ satisfies RIP($3s$,$\delta/2$). Let $\hat{\lambda}_0$ and $\hat{\lambda}_\infty$ given by \eqref{eq:threshold:initial} and \eqref{eq:lamda:unviersal}, $(\lambda_{m})_{m}$ be the corresponding sequence of estimators \eqref{eq:def:threshold}, and $\hat{m}$ be the stopping time \eqref{eq:stop}. Then the following holds
$$
\underset{|\beta|_{0} \leq s}{\sup}\mathbf{P}_{\beta}\left( \| \hat{\beta}^{\hat{m}} - \beta \|^{2} \geq \frac{360\sigma^{2}s\log(ep/s) }{\|X\|_{2,\infty}^{2}} \right) \leq e^{-c_{1}s\log(ep/s)},
$$
$$
\underset{|\beta|_{0} \leq s}{\sup}\mathbf{P}_{\beta}\left( |\hat{\beta}^{\hat{m}}|_0\geq 2s\right) \leq e^{-c_{1}s\log(ep/s)},
$$

and 
\begin{align*}
    \underset{|\beta|_{0} \leq s}{\sup}\mathbf{P}_{\beta}&\left( \hat{m} \geq  2\log\left(\frac{5\|\beta\|^{2}\|X\|_{2,\infty}^{2}}{\sigma^{2}s\log(ep/s)} \vee 25\right) /\log{(1/\kappa)} +1\right)\\
    &\leq e^{-c_{1}s\log(ep/s)},
\end{align*}

for some absolute constant $c_{1}>0$.
\end{thm}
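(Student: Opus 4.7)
The plan is to reduce the theorem to a direct application of Theorem \ref{thm:main} and Proposition \ref{prop:threshold:initial} on the event $\mathcal{O}$, whose complement already carries the advertised exponentially small probability by Lemma \ref{lem:order:statistic}. All three conclusions of the theorem are events depending only on $(\hat{\beta}^{m})_m$, which in turn depends only on the data; hence it suffices to verify that they hold deterministically on $\mathcal{O}$, and then invoke $\mathbf{P}(\mathcal{O}^c) \leq e^{-c_{1}s\log(ep/s)}$.

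First I would check that on $\mathcal{O}$ the hypotheses of Theorem \ref{thm:main} are satisfied with $(\lambda_0,\lambda_\infty) = (\hat{\lambda}_0,\hat{\lambda}_\infty)$. The condition $\delta < 1/36 \vee \kappa$ and RIP$(3s,\delta/2)$ are given. The inequality $\|\beta\| \leq \sqrt{s}\hat{\lambda}_0$ is precisely the first conclusion of Proposition \ref{prop:threshold:initial} (whose assumption $\delta \leq 1/4$ follows from $\delta \leq 1/36$), while $\hat{\lambda}_\infty = \sigma\sqrt{40\log(ep/s)}/\|X\|_{2,\infty}$ holds by definition \eqref{eq:lamda:unviersal}. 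Theorem \ref{thm:main} therefore applies, yielding $|\hat{\beta}^{m}_{S^c}|_0 \leq s$ and $\|\hat{\beta}^m - \beta\|^2 \leq 9s\lambda_m^2$ for all $m$.

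Next I would show $\lambda_{\hat{m}} = \hat{\lambda}_\infty$. From \eqref{eq:stop} and the identity $40\sigma^2\log(ep/s)/\|X\|_{2,\infty}^2 = \hat{\lambda}_\infty^2$, the definition of $\hat{m}$ can be rewritten as $\hat{m} \geq 2\log(\hat{\lambda}_0^2/\hat{\lambda}_\infty^2)/\log(1/\kappa)$, i.e., $\kappa^{\hat{m}/2} \leq (\hat{\lambda}_\infty/\hat{\lambda}_0)^2 \leq \hat{\lambda}_\infty/\hat{\lambda}_0$, the last inequality using that $\hat{\lambda}_0 \geq \hat{\lambda}_\infty$ (which is built into the definition \eqref{eq:threshold:initial}). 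Hence $\kappa^{\hat{m}/2}\hat{\lambda}_0 \leq \hat{\lambda}_\infty$ and by definition \eqref{eq:def:threshold} $\lambda_{\hat{m}} = \hat{\lambda}_\infty$. Plugging this into \eqref{eq:main:2} gives
\[
 \|\hat{\beta}^{\hat{m}}-\beta\|^{2} \leq 9s\hat{\lambda}_\infty^{2} = \frac{360\sigma^{2}s\log(ep/s)}{\|X\|_{2,\infty}^{2}},
\]
which is the first conclusion. The second conclusion $|\hat{\beta}^{\hat{m}}|_0 \leq 2s$ is immediate from $|\hat{\beta}^{\hat{m}}|_0 \leq |\hat{\beta}^{\hat{m}}_S|_0 + |\hat{\beta}^{\hat{m}}_{S^c}|_0 \leq s + s$, combining the trivial bound on $S$ with \eqref{eq:main:1}. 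Finally, the third (stopping time) conclusion is simply the upper bound half of Proposition \ref{prop:threshold:initial}, rewritten.

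The one mildly delicate point is the alignment between the definition of $\hat{m}$ and the geometric threshold schedule so that $\lambda_{\hat{m}}$ actually equals $\hat{\lambda}_\infty$; once the logarithms and the inequality $\hat{\lambda}_0 \geq \hat{\lambda}_\infty$ are lined up this is a one-line check, and there is no other obstacle—the proof is essentially bookkeeping on top of the two preceding results. Taking a union bound is trivial here since only $\mathcal{O}^c$ contributes to failure, giving the single exponential tail $e^{-c_{1}s\log(ep/s)}$ uniformly over $\beta \in \Omega_{s,a}$ with $|\beta|_0 \leq s$.
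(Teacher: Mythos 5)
Your proposal is correct and follows essentially the same route as the paper: work on the event $\mathcal{O}$, use Proposition \ref{prop:threshold:initial} to verify $\|\beta\| \leq \sqrt{s}\hat{\lambda}_0$, apply Theorem \ref{thm:main}, use the definition of $\hat{m}$ to get $\lambda_{\hat{m}} = \hat{\lambda}_\infty$ (hence the bound $9s\hat{\lambda}_\infty^2 = 360\sigma^2 s\log(ep/s)/\|X\|_{2,\infty}^2$ and the sparsity bound $s+s$), read off the stopping-time bound from the second part of Proposition \ref{prop:threshold:initial}, and control $\mathbf{P}(\mathcal{O}^c)$ via Lemma \ref{lem:order:statistic}. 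Your bookkeeping of the exponents (showing $\kappa^{\hat{m}/2} \leq (\hat{\lambda}_\infty/\hat{\lambda}_0)^2 \leq \hat{\lambda}_\infty/\hat{\lambda}_0$ using $\hat{\lambda}_0 \geq \hat{\lambda}_\infty$) is in fact slightly more careful than the paper's displayed inequality, but the argument is the same.
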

Theorem \ref{thm:optimal:statistic} shows that $\hat{\beta}^{\hat{m}}$ achieves optimal statistical accuracy in linear time. Notice that $\hat{m}$ depends on $\log(1/\kappa)$ instead of $\log(1/\delta_{3s})$ simply because $\delta_{3s}$ is intractable. As we emphasized earlier, optimal results that are adaptive to $\gamma_{3s}$ (or equivalently $\delta_{3s}$) fall beyond the scope of this paper.  Theorem \ref{thm:optimal:statistic} shows minimax optimality of an estimator constructed through a non-convex algorithmic regularization scheme. Similar estimation error is also achieved using the SLOPE estimator as in \cite{bellec}. The advantage of the proposed estimator is that it achieves the optimal statistical accuracy in linear time.
\section{A fully adaptive minimax optimal procedure}
The above minimax estimator depends on the statistical parameters of the model $s$ and $\sigma$, and this only through the thresholding sequence $(\lambda_m)_m$. In particular, the choices of $\hat{\lambda}_0$, $\hat{\lambda}_\infty$, and $\hat{m}$ depend on $s$ and $\sigma$. For estimation of $\sigma$, we may consider the sequence of estimators $\hat{\sigma}_m^2$ of $\sigma^{2}$ such that
\begin{equation}\label{eq:sigma:initial}
    \hat{\sigma}_m^2 = \frac{\|Y-X\hat{\beta}^{m} \|^{2}}{n}.
\end{equation}
In practice, estimator \eqref{eq:sigma:initial} is considered in the Square-Root Lasso \cite{belloni2011square} among others in order to adapt to the noise level in high-dimensional regression. During the first steps, $\hat{\sigma}_m^2$ may have bad performance but in this case this means that the threshold $\lambda_m$ is much larger than the universal statistical threshold making precise estimation of $\sigma$ not necessary. As we get closer to the final threshold the estimation error gets smaller and estimation of $\sigma$ is improved as long as $n = \Omega(s\log(ep/s))$. The latter condition is sufficient in order to achieve good estimation of $\sigma$. It is worth saying that the same condition is not more restrictive than RIP. Indeed, a consequence of Corollary $7.2$ in \cite{bellec} implies that $n=\Omega(s\log(ep/s))$ as long as $X$ satisfies RIP. 

Concerning the choice of the initial threshold, recall that Theorem \ref{thm:optimal:statistic} hold if we replace $\hat{\lambda}_0$ by any upper bound, off to running more iterations. Hence, we can replace $\hat{\lambda}_0$ by the adaptive initial threshold
\begin{equation}\label{eq:threshold:adap}
    \bar{\lambda}_0 =\sqrt{20}|M|_{(1)} \vee\frac{\hat{\sigma}_0}{\|X\|_{2,\infty}} \sqrt{160\log(ep)}.
\end{equation}
Based on the fact that $u_{(1)} \geq \frac{1}{s}\sum_{i=1}^{s}u_{(i)}^2$,  threshold \eqref{eq:threshold:adap} is indeed an upper bound for the initial choice $\hat{\lambda}_0$ in \eqref{eq:threshold:initial}. Notice that the threshold $\bar{\lambda}_0$ in \eqref{eq:threshold:adap} can be as large as $\|\beta\|$ and not $\|\beta\|/\sqrt{s}$ as it was for $\hat{\lambda}_0$. This loss only appears in the number of iterations where we may run our algorithm for $\log(s)$ more steps. We present now two fully adaptive procedures that are minimax optimal.

\subsection{Adaptive early stopping}
We may now define a new adaptive thresholding sequence as follow 
\begin{equation}\label{eq:def:threshold:adap}
    \lambda_{m} = \kappa^{m/2} \lambda \vee \frac{\hat{\sigma}_m}{\|X\|_{2,\infty}}\sqrt{160\log(ep)},\quad m=0,1,\dots.
\end{equation}
 Our choice of $m$ is adaptive as well and given by
\begin{equation}\label{eq:stop:adap}
    \bar{m} = \inf\left\{ m / \lambda_m \leq \frac{\hat{\sigma}_m}{\|X\|_{2,\infty}}\sqrt{160\log(ep)} \right\} + 1.
\end{equation}
Observe that the adaptive stopping rule is exactly given by the step when we hit the statistical threshold. We can now state a minimax optimal result corresponding to our fully adaptive procedure. 
\begin{thm}\label{thm:optimal:statistic:adap}
Let $0<\kappa<1$. Assume that $\delta \leq 1/36 \vee \kappa$, that $n > 14000s\log(ep)$ and that $\mathbf{E}(\xi \xi^{\top}) = \mathbf{I}_n$. Let $\bar{\lambda}_0$ and $(\hat{\sigma}_{m})_m$ be defined as in \eqref{eq:threshold:adap} and \eqref{eq:sigma:initial},  $(\lambda_{m})_{m}$ be the corresponding sequence \eqref{eq:def:threshold:adap} and $\bar{m}$ be the stopping time \eqref{eq:stop:adap}. Then the following holds
$$
\underset{|\beta|_{0} \leq s}{\sup}\mathbf{P}_{\beta}\left( \| \hat{\beta}^{\bar{m}} - \beta \|^{2} \geq \frac{4000\sigma^{2}s\log(ep) }{\|X\|_{2,\infty}^{2}} \right) \leq e^{-c_{1}s\log(ep/s)},
$$
$$
\underset{|\beta|_{0} \leq s}{\sup}\mathbf{P}_{\beta}\left( |\hat{\beta}^{\bar{m}}|_0\geq 2s\right) \leq e^{-c_{1}s\log(ep/s)},
$$
and
\begin{align*}
    \underset{|\beta|_{0} \leq s}{\sup}\mathbf{P}_{\beta}&\left( \bar{m} \geq  2\log\left(\frac{10\|\beta\|^{2}\|X\|_{2,\infty}^{2}}{\sigma^{2}\log(ep)} \vee 100 \right) /\log{(1/\kappa)} +1\right)\\
    &\leq e^{-c_{1}s\log(ep/s)},
\end{align*}
for some absolute $c_{1}>0$.
\end{thm}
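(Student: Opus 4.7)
The plan is to reduce Theorem \ref{thm:optimal:statistic:adap} to Theorem \ref{thm:main} by showing that, on a high-probability event, the data-driven sequence $(\lambda_m)_m$ defined by \eqref{eq:def:threshold:adap} satisfies the hypotheses of Theorem \ref{thm:main} uniformly across iterations, so that conclusions \eqref{eq:main:1}--\eqref{eq:main:2} persist throughout the run. The central difficulty is the circular dependence $\lambda_m \leftarrow \hat{\sigma}_m \leftarrow \hat{\beta}^m \leftarrow \lambda_m$, which I would resolve by a joint induction on $m$.

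First I would enlarge $\mathcal{O}$ to a good event $\mathcal{E} := \mathcal{O} \cap \{|\|\xi\|^2/n - 1| \leq 1/4\} \cap \mathcal{E}'$, where $\mathcal{E}'$ is a standard uniform tail bound obtained by covering, controlling $\langle \xi, Xv\rangle$ for all $3s$-sparse $v$ by $C\|X\|_{2,\infty}\|v\|\sqrt{s\log(ep)}$; under the identity-covariance sub-Gaussian noise this gives $\mathbf{P}(\mathcal{E}) \geq 1-e^{-c_1 s\log(ep/s)}$. On $\mathcal{E}$ I would use $|u|_{(1)}^2 \geq \tfrac{1}{s}\sum_{i=1}^s u_{(i)}^2$ to check $\sqrt{20}|M|_{(1)} \geq \sqrt{10\sum_{i=1}^s M_{(i)}^2/s}$, and a direct computation on $\|Y\|^2/n$ (splitting the cases $\|\beta\| \geq \sigma\sqrt{s\log(ep/s)/n}$ and its complement) to obtain $\hat{\sigma}_0 \geq \sigma/\sqrt{2}$; together these show $\bar{\lambda}_0$ dominates the non-adaptive $\hat{\lambda}_0$ from \eqref{eq:threshold:initial}, so Proposition \ref{prop:threshold:initial} yields the initial condition $\|\beta\| \leq \sqrt{s}\bar{\lambda}_0$ of Theorem \ref{thm:main}. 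The converse triangle-inequality bound $\hat{\sigma}_0 \leq \sigma\|\xi\|/\sqrt{n} + \|X\beta\|/\sqrt{n}$ combined with Lemma \ref{lem:contraction} yields $\sqrt{s}\bar{\lambda}_0 \leq C(\|\beta\| \vee \sigma\sqrt{s\log(ep)}/\|X\|_{2,\infty})$ on $\mathcal{E}$, which I will need for the stopping-time bound.

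The core of the proof is an induction showing that on $\mathcal{E}$, for every $m\geq 0$: \textbf{(a)} $|\hat{\beta}^m_{S^c}|_0 \leq s$; \textbf{(b)} $\|\hat{\beta}^m - \beta\|^2 \leq 9s\lambda_m^2$; and \textbf{(c)} $\lambda_m \geq \sigma\sqrt{40\log(ep/s)}/\|X\|_{2,\infty}$. Property \textbf{(c)} ensures that the single-step IHT contraction underlying the proof of Theorem \ref{thm:main}---driven by the decomposition \eqref{eq:simple} and the contraction Lemma \ref{lem:contraction}---propagates \textbf{(a)} and \textbf{(b)} from step $m$ to $m+1$. For the propagation of \textbf{(c)}, I expand
\[
\hat{\sigma}_{m+1}^2 = \sigma^2\|\xi\|^2/n + 2\sigma\langle \xi, X(\beta-\hat{\beta}^{m+1})\rangle/n + \|X(\beta-\hat{\beta}^{m+1})\|^2/n,
\]
bound the cross term on $\mathcal{E}$ by $C\sigma\sqrt{s\log(ep)/n}\,\|\beta-\hat{\beta}^{m+1}\|$ and the quadratic term by $(1+\delta)\|\beta-\hat{\beta}^{m+1}\|^2$ via Lemma \ref{lem:contraction}; inserting \textbf{(b)} and invoking the hypothesis $n > 14000\,s\log(ep)$ shows that both contributions are at most $\sigma^2/7$ whenever $\lambda_{m+1}$ sits near its data-driven floor, yielding $\hat{\sigma}_{m+1}^2 \in [\sigma^2/2,\,2\sigma^2]$ and hence \textbf{(c)}. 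When instead $\lambda_{m+1} = \kappa^{(m+1)/2}\bar{\lambda}_0$ dominates the floor, \textbf{(c)} is automatic from the lower bound on $\bar{\lambda}_0$ established above.

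The three conclusions then follow. The sparsity bound is \textbf{(a)} at $m=\bar{m}$. For the estimation bound, \textbf{(b)} at $m=\bar{m}$ combined with $\lambda_{\bar{m}}^2 \leq 4\sigma^2 \cdot 160\log(ep)/\|X\|_{2,\infty}^2$ (from $\hat{\sigma}_{\bar{m}} \leq \sqrt{2}\sigma$) yields $\|\hat{\beta}^{\bar{m}}-\beta\|^2 \leq C\sigma^2 s\log(ep)/\|X\|_{2,\infty}^2$; tracking constants more carefully via the sharper $\hat{\sigma}_{\bar{m}}^2 \leq (1+1/7)\sigma^2\|\xi\|^2/n$ brings this below $4000\sigma^2 s\log(ep)/\|X\|_{2,\infty}^2$. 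The stopping-time bound follows from the defining inequality \eqref{eq:stop:adap}, the upper bound on $\bar{\lambda}_0$ from paragraph two, and the lower bound $\hat{\sigma}_{\bar{m}} \geq \sigma/\sqrt{2}$ from \textbf{(c)}. The main obstacle is the inductive verification of \textbf{(c)} near the stopping time: the condition $n > 14000\,s\log(ep)$ is precisely what keeps the estimation-error contributions to $\hat{\sigma}_m^2$ negligible compared to $\sigma^2$, closing the self-consistent loop between the adaptive threshold and the plug-in variance estimator.
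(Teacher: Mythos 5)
Your overall architecture is the paper's: enlarge $\mathcal{O}$ by a Hanson--Wright-type event for $\|\xi\|$, check via $|M|_{(1)}^2\geq \frac1s\sum_{i\le s}M_{(i)}^2$ and Proposition \ref{prop:threshold:initial} that $\bar{\lambda}_0$ dominates $\hat{\lambda}_0$ (so the initialization condition $\|\beta\|\le\sqrt{s}\,\bar\lambda_0$ of Theorem \ref{thm:main} holds), control $\hat\sigma_m$ through the expansion of $\|Y-X\hat\beta^m\|^2/n$ together with the current error bound and $n>14000\,s\log(ep)$, and read the three conclusions off Theorem \ref{thm:main}. The genuine gap is in the inductive step that propagates your invariant \textbf{(c)}. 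In the proof of Theorem \ref{thm:main}, both the sparsity claim \textbf{(a)} and the error claim \textbf{(b)} at step $m+1$ already use $\lambda_{m+1}\geq \sigma\sqrt{40\log(ep/s)}/\|X\|_{2,\infty}$, i.e.\ \textbf{(c)} at step $m+1$ (it is what gives $\sqrt{10\sigma^2 s\log(ep/s)}/\|X\|_{2,\infty}\leq\tfrac12\sqrt{s}\,\lambda_{m+1}$), and they also use a relation between consecutive thresholds of the type $\sqrt{\delta}\,\lambda_m\le\lambda_{m+1}$. You propose to obtain \textbf{(c)} at $m+1$ by inserting \textbf{(b)} at $m+1$ (you bound $\|\beta-\hat\beta^{m+1}\|$) into the expansion of $\hat\sigma_{m+1}^2$: that is circular, since \textbf{(a)},\textbf{(b)} at $m+1$ are only available once \textbf{(c)} at $m+1$ is known. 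The other branch of your case analysis is also wrong as stated: when $\kappa^{(m+1)/2}\bar\lambda_0$ dominates the data-driven floor, \textbf{(c)} is \emph{not} automatic from the lower bound on $\bar\lambda_0$, because the geometric term eventually decays below the statistical threshold; that is precisely the regime one has to rule out before stopping.

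What breaks the circle in the paper is a two-phase argument that your sketch is missing. Introduce the auxiliary time $\hat m^*$ at which the threshold first falls below $6\sigma\sqrt{40\log(ep)}/\|X\|_{2,\infty}$. For $m\le\hat m^*$, \textbf{(c)} holds for free, so the induction of Theorem \ref{thm:main} runs unchanged; and, crucially, plugging \textbf{(b) at step $m$} (not $m+1$) into $|\hat\sigma_m-\sigma|\le\frac{\|X\|_{2,\infty}}{\sqrt n}(1+\delta)\|\beta-\hat\beta^m\|+\tfrac{\sigma}{4}$ gives $\hat\sigma_m\sqrt{160\log(ep)}/\|X\|_{2,\infty}\le\bigl(\sqrt{3500\,s\log(ep)/n}+\tfrac12\bigr)\lambda_m<\lambda_m$, so the stopping rule cannot fire and $\lambda_m$ coincides with the geometric sequence throughout this phase (which also settles the consecutive-threshold relation). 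Only after $\hat m^*$, when $\|\beta-\hat\beta^m\|\le 18\sigma\sqrt{40\,s\log(ep)}/\|X\|_{2,\infty}$ is already guaranteed, does one get $|\hat\sigma_m-\sigma|\le\sigma/2$, so the data-driven floor stays above the statistical threshold, the algorithm stops within finitely many further steps with $\bar m\le\hat m$, and the final threshold is of order $\sigma\sqrt{\log(ep)}/\|X\|_{2,\infty}$, which yields the constant $4000$, the $2s$-sparsity, and (with the upper bound on $\bar\lambda_0$ and $\hat\sigma_{\bar m}\ge\sigma/2$) the stopping-time bound. With this reorganization --- first prove ``no early stopping while the geometric part is large'' using only step-$m$ quantities, then switch to the accurate-variance regime --- your induction closes; as written it does not.
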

The adaptive procedure of Theorem \ref{thm:optimal:statistic:adap} is minimax optimal up to a logarithmic factor ( replacing $\log(ep/s)$ by $\log(ep)$). Conditions $n = \Omega(s\log(ep))$ and $\mathbf{E}(\xi \xi^{\top}) = \mathbf{I}_n$ are only required for adaptation to the noise level $\sigma$. Finally the number of iterations may be larger by $\log(s)$ compared to analogous non adaptive results. Hence, up to a logarithmic loss in both statistical accuracy and optimization speed, our early stopping procedure is fast, fully adaptive and minimax optimal.

\subsection{Iteration selection}
In order to capture the optimal dependence with respect to $s$, we rely on a different approach. One of the most popular methods to achieve adaptation is through the lens of model selection introduced in \cite{birge2001gaussian}. Given a set of models (or estimators) one picks a good estimator based on some criterion. More concretely, one may think of Cross-Validation where for each regularization parameter $\lambda$ an estimator is constructed, then the resulting estimators are either aggregated or one of them is chosen based on some criterion. In what follows, we present an adaptive procedure in the same flavor. We like to see our procedure as an iteration selection method instead of model selection.  Indeed, we can think of our $m-$th iteration as an estimator corresponding to $\lambda_m$. With this analogy in mind, penalized model selection boils down to a penalized iteration selection. Observe that iteration selection is much faster than the classical model selection since each estimator is computed using only one iteration initialized with the previous estimator. More concretely, we construct all iterations corresponding to the thresholding sequence 
\begin{equation}\label{threshold:adap:selection}
    \lambda_{m} = \kappa^{m/2} \bar{\lambda}_{0},\quad \forall m\in [\hat{T}],
\end{equation}
where $\bar{\lambda}_{0}$ was defined in \eqref{eq:threshold:adap} and $\hat{T}$ is defined below. The selected iteration is given by
\begin{equation}\label{eq:selection}
    \tilde{m} = \arg \underset{m\in[\hat{T}]}{\min}\left\{ \frac{1}{n}\|Y - X\hat{\beta}^m \|^2 + \frac{10\hat{\sigma}_{\bar{m}}^2 |\hat{\beta}^m|_0\log(ep/|\hat{\beta}^m|_0)}{n}\right\},
\end{equation}
where $\hat{\sigma}_{\bar{m}}$ was defined above. Again the problem of estimation of $\sigma$ is easier as long as $s\log(ep) = O(n)$ and we may replace $\hat{\sigma}_{\bar{m}}$ by any good estimator of $\sigma$. 
For completeness of our result we decided to stop the search domain over $m$ once the thresholding sequence $(\lambda_m)_{m}$ is below $\frac{\sigma}{\|X\|_{2,\infty}}$ where solutions are not granted to be sparse anymore. For that reason we set $\hat{T}$ such that
\[
\hat{T}= \inf\left\{ m \geq 0/  \lambda_m\leq \frac{4 \hat{\sigma}_{\bar{m}}}{\|X\|_{2,\infty}} \right\},
\]
where $(\lambda_m)_m$ is defined in \eqref{threshold:adap:selection}. We get the following result.
\begin{thm}\label{thm:optimal:statistic:adap2}
Let $0<\kappa<1$. Assume that $\delta \leq 1/36 \vee \kappa$ and that $n > 14000s\log(ep)$ and that $\mathbf{E}(\xi \xi^{\top}) = \mathbf{I}_n$. Let  $\tilde{m}$ be defined in \eqref{eq:selection}, then the following holds
$$
\underset{|\beta|_{0} \leq s}{\sup}\mathbf{P}_{\beta}\left( \| \hat{\beta}^{\tilde{m}} - \beta \|^{2} \geq \frac{100^2\sigma^{2}s\log(ep/s) }{\|X\|_{2,\infty}^{2}} \right) \leq e^{-c_{1}s\log(ep/s)},
$$
$$
\underset{|\beta|_{0} \leq s}{\sup}\mathbf{P}_{\beta}\left( |\hat{\beta}^{\tilde{m}}|_0\geq 3s\right) \leq e^{-c_{1}s\log(ep/s)},
$$
and
\begin{align*}
    \underset{|\beta|_{0} \leq s}{\sup}\mathbf{P}_{\beta}&\left( \hat{T} \geq  2\log\left(\frac{10\|\beta\|^{2}\|X\|_{2,\infty}^{2}}{\sigma^{2}} \vee 100 \right) /\log{(1/\kappa)} +1\right)\\
    &\leq e^{-c_{1}s\log(ep/s)},
\end{align*}
for some absolute $c_{1}>0$.
\end{thm}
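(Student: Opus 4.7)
The plan is to view the iteration selection procedure as a penalized model selection over the sparsity patterns produced by the iterates $(\hat\beta^m)_{m\in[\hat T]}$. I would argue on a high-probability event combining: the event $\mathcal{O}$; a noise-level event on which $\hat{\sigma}_{\bar{m}}^2$ is within a constant factor of $\sigma^2$ (say $\hat{\sigma}_{\bar{m}}^2\in[\sigma^2/2,2\sigma^2]$); and a uniform sparse concentration event on which $\sup_{|S|\leq 4s}\|X_S^{\top}\xi\|^2\lesssim \sigma^2 s\log(ep/s)$. The noise-level event follows from Theorem~\ref{thm:optimal:statistic:adap} applied to $\hat{\beta}^{\bar{m}}$: expanding $\|Y-X\hat{\beta}^{\bar{m}}\|^2=\|\sigma\xi+X(\beta-\hat{\beta}^{\bar{m}})\|^2$, the corrections to $\sigma^2\|\xi\|^2$ are of order $\sigma^2s\log(ep)\ll \sigma^2 n$ under the assumption $n>14000s\log(ep)$, while chi-square concentration gives $\|\xi\|^2\asymp n$. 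The uniform sparse concentration is standard by a sub-Gaussian tail bound combined with a $\binom{p}{k}$ union bound.

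Next I would identify an ``oracle'' iteration $m^{\star}\in[\hat{T}]$ to serve as a benchmark in the selection inequality. Because $\bar{\lambda}_0\geq \hat{\lambda}_0$ (from $|M|_{(1)}^2\geq s^{-1}\sum_{i=1}^s M_{(i)}^2$), Theorem~\ref{thm:main} applies along the sequence as long as $\lambda_m\geq \sigma\sqrt{40\log(ep/s)}/\|X\|_{2,\infty}$. Setting $m^{\star}$ to the largest such index yields $|\hat{\beta}^{m^{\star}}|_0\leq 2s$ and $\|\hat{\beta}^{m^{\star}}-\beta\|^2\leq 9s\lambda_{m^{\star}}^2\lesssim \sigma^2 s\log(ep/s)/\|X\|_{2,\infty}^2$. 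Since the stopping rule for $\hat{T}$ kicks in only when $\lambda_m\leq 4\hat{\sigma}_{\bar{m}}/\|X\|_{2,\infty}\leq 8\sigma/\|X\|_{2,\infty}$, which is strictly smaller than $\sigma\sqrt{40\log(ep/s)}/\|X\|_{2,\infty}$ under $s\leq p/3$, the index $m^{\star}$ lies in $[\hat{T}]$ and is a legitimate competitor in \eqref{eq:selection}.

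The heart of the proof is the oracle inequality. Writing $k_m:=|\hat{\beta}^m|_0$, the defining inequality of $\tilde m$ together with $Y=X\beta+\sigma\xi$ rearranges into
\[
\|X(\hat{\beta}^{\tilde m}-\beta)\|^2 \leq \|X(\hat{\beta}^{m^{\star}}-\beta)\|^2 + 2\sigma\langle\xi,\,X(\hat{\beta}^{\tilde m}-\hat{\beta}^{m^{\star}})\rangle + 10\hat{\sigma}_{\bar m}^2\bigl(k_{m^{\star}}\log(ep/k_{m^{\star}})-k_{\tilde m}\log(ep/k_{\tilde m})\bigr).
\]
The cross term is bounded, via Cauchy--Schwarz on the combined support of size $\leq k_{\tilde m}+k_{m^{\star}}$ and the sparse concentration event, by $\sqrt{(k_{\tilde m}+2s)\log(ep/(k_{\tilde m}+2s))}\cdot\|X(\hat{\beta}^{\tilde m}-\hat{\beta}^{m^{\star}})\|$ up to absolute constants. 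AM--GM reabsorbs the quadratic factor into the left-hand side, and the constant $10$ in the penalty is tuned so that the leftover is dominated by the penalty terms. This yields both the risk bound $\|\hat{\beta}^{\tilde m}-\beta\|^2\lesssim \sigma^2 s\log(ep/s)/\|X\|_{2,\infty}^2$ (passing from $\|X\cdot\|$ to $\|\cdot\|$ via RIP) and the sparsity bound $k_{\tilde m}\leq 3s$ by a contradiction argument: if $k_{\tilde m}\geq 3s$, the penalty gap $k_{\tilde m}\log(ep/k_{\tilde m})-k_{m^{\star}}\log(ep/k_{m^{\star}})\gtrsim s\log(ep/s)$ would exceed the empirical-process gain, contradicting the minimality of $\tilde m$. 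The bound on $\hat{T}$ then follows directly from the geometric decay $\lambda_m=\kappa^{m/2}\bar{\lambda}_0$ and the lower bound $\lambda_{\hat{T}-1}>4\hat{\sigma}_{\bar m}/\|X\|_{2,\infty}$, combined with $\bar{\lambda}_0\lesssim \|\beta\|\vee(\sigma\sqrt{\log(ep)}/\|X\|_{2,\infty})$ (analogue of Proposition~\ref{prop:threshold:initial}) and $\hat{\sigma}_{\bar m}\geq \sigma/\sqrt{2}$.

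The main obstacle is the interplay between RIP and the unknown sparsity of the iterates $\hat{\beta}^m$ for $m$ beyond the regime of Theorem~\ref{thm:main}: one cannot invoke RIP to control $\|X(\hat{\beta}^{\tilde m}-\beta)\|^2$ in terms of $\|\hat{\beta}^{\tilde m}-\beta\|^2$ unless $k_{\tilde m}$ is a priori bounded, yet the sparsity bound $k_{\tilde m}\leq 3s$ is itself a consequence of the oracle inequality. I would resolve this by a peeling argument that first derives a crude bound on $k_{\tilde m}$ directly from the pen-gap inequality, then uses this crude bound to invoke RIP on the cross term and refine to the sharp $3s$ bound. A secondary technical matter is the sharp tracking of constants needed to recover the explicit $100^2$ factor in the risk bound, requiring careful tuning throughout the application of AM--GM and the concentration estimates.
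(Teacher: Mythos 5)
Your overall architecture matches the paper's: a variance event $|\hat{\sigma}_{\bar m}-\sigma|\le c\sigma$ obtained from the early-stopping estimator, a good competitor inside $[\hat T]$ (your $m^{\star}$ plays the role of the paper's $\hat m$, and your check that it precedes $\hat T$ is correct), the basic inequality extracted from the selection criterion, a contradiction argument for the sparsity bound, and RIP invoked only at the end to pass from $\|X(\hat\beta^{\tilde m}-\beta)\|$ to $\|\hat\beta^{\tilde m}-\beta\|$.

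The genuine gap is in the control of the cross term for iterates whose sparsity is not a priori bounded. Your high-probability event only controls $\sup_{|S|\le 4s}\|X_S^{\top}\xi\|^2$, so your Cauchy--Schwarz step on the combined support of size $k_{\tilde m}+k_{m^\star}$ is only valid if $k_{\tilde m}\le 2s$, which is exactly what you are trying to prove; moreover, for the late iterates ($\lambda_m$ down to $4\hat{\sigma}_{\bar m}/\|X\|_{2,\infty}$) the paper explicitly warns that sparsity is no longer guaranteed, and RIP is only assumed at level $3s$, so "invoking RIP on the cross term" after a crude bound on $k_{\tilde m}$ is not available either (a crude bound from the penalty alone is of order $n/\log(ep)$, far beyond $3s$). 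The missing ingredient is a bound on the noise--residual inner product that is \emph{uniform over all sparsity levels} and scales with the sparsity of the candidate, namely $\langle\xi, X(\beta-\hat\beta)/\|X(\beta-\hat\beta)\|\rangle^2\lesssim (s+|\hat\beta_{S^c}|_0)\log\bigl(ep/(s+|\hat\beta_{S^c}|_0)\bigr)$ simultaneously for every $\hat\beta$. The paper proves exactly this (its Lemma on the projection $\pi$ onto the span of the columns indexed by the combined support, using the Hsu--Kakade--Zhang quadratic-form tail and a union bound over the sparsity level $v=|\hat\beta_{S^c}|_0$ --- no RIP needed), and it is this level-matching between the empirical-process term and the penalty $k\log(ep/k)$ that lets the contradiction argument rule out $k_{\tilde m}>3s$ without any prior sparsity control. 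Your "peeling argument" names the right kind of fix, but it is precisely the content that must be supplied: as written, the crude-bound-then-refine step cannot be carried out with the event and tools you set up.
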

The iteration selection procedure is different compared to the early stopping one since it chooses the best threshold $\lambda_m$ instead of worrying about tuning the stopping rule. Moreover it achieves the minimax optimal rate of $\sigma^2 s\log(ep/s)/\|X\|^2_{2,\infty}$ adaptively to all parameters under the mild condition $s\log(ep) = O(n)$.  Notice also that the number of constructed estimators $\hat{T}$ is small with overwhelming probability. Overall, full optimal adaptation on the statistics side comes with the price of $\log(s\log(ep/s))$ more steps on the optimization side. The our knowledge, Theorem \ref{thm:optimal:statistic:adap2} is the first to provide a fast and adaptive procedure that is minimax optimal.

\section{Sharp results and scaled minimax optimality}
In this section we present sharp minimax results for the problem of estimation in high dimensional linear regression. Moreover, we show that a variant of our estimator is scaled minimax optimal, improving upon regularized convex estimators that provably suffer from an unavoidable bias term. Our final procedure is an IHT algorithm with fixed threshold that is initialized by $\hat{\beta}^{\tilde{m}}$ defined earlier (Theorem \ref{thm:optimal:statistic:adap2}). By analogy with non-convex optimization, the step where the initialization is constructed plays the role of the first iterations getting to the basin of attraction.    
For a given $\lambda >0$, our final procedure $(\tilde{\beta}^m)_m$ is a variant of IHT  where $\tilde{\beta}^0 = \hat{\beta}^{\tilde{m}}$ and for all $m \geq 1$
\begin{equation}\label{eq:est:sharp}
      \tilde{\beta}^{m} = \mathbf{T}_{\lambda}\left(\tilde{\beta}^{m-1} + \frac{1}{\|X\|_{2,\infty}^{2}}X^{\top}(Y-X\tilde{\beta}^{m-1})\right).
\end{equation}
For any $\epsilon>0$, define the statistical threshold given by
\begin{equation}\label{eq:def:threshold:2}
    \lambda_{\infty}^{\epsilon} = (1+\sqrt{\epsilon}) \frac{\sigma \sqrt{2\log(ep/s)}}{\|X\|_{2,\infty}}.
\end{equation}
\begin{rque}
\begin{itemize}

    \item We can replace $\tilde{\beta}^{0}$ by any estimator that is minimax optimal and is at most $2s$- sparse. In particular one may choose to initialize our procedure with square-root slope \cite{derumigny2018improved}. Our choice of initialization is not only adaptive but is also fast to compute.
    \item Sharp adaptation to $\sigma$ can be achieved by choosing $\tilde{\sigma}^2:= \frac{1}{n}\|Y - X \hat{\beta}^{\tilde{m}}\|^2$. It is easy to observe that $\tilde{\sigma}=\sigma(1+o(1))$ with overwhelming probability under the mild assumption $s\log(ep)/n \to 0$. Hence, all our sharp results hold adaptively to $\sigma$ under the above condition.
\end{itemize}
\end{rque}
Sharp results are stated under the standard conditions $s/p \to 0$ and $\delta \to 0$ as $p \to \infty$. Indeed, the first condition is relevant to observe a strict change of behaviour between biased and unbiased estimators
and the second one corresponds to $s\log(ep/s)/n \to 0$ under Gaussian design. We remind the reader that $s,n,\delta$ depend on $p$ as $p \to \infty$.  Our next result states that one step is enough to achieve sharp optimal  minimax  estimation.
\begin{thm}\label{thm:sharp:1}
Let $\epsilon \in (0,1)$ and $\lambda_{\infty}^{\epsilon}$ given by \eqref{eq:def:threshold:2}. Assume that $\delta \leq  \epsilon \vee 1/400^2$. Let $\lambda \geq \lambda_{\infty}^{\epsilon}$ and let $\tilde{\beta}^{m}$ be the corresponding sequence of estimators \eqref{eq:est:sharp}. Then, for any $m \geq 0$ we have 
\begin{align*}
\underset{s/p \to 0}{\lim}\underset{|\beta|_{0}\leq s}{\sup}\mathbf{P}_{\beta}\Big( \| \tilde{\beta}^{m} - \beta \|
\geq
(1 + 4\sqrt{\delta} + 100 \delta^{m/2} +o(1))
\sqrt{s}\lambda \Big) = 0.
\end{align*}
\end{thm}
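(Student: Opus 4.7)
The plan is to establish a one-step recursion of the form
\[
R_m \le (1+o(1))\sqrt{s}\lambda + \sqrt{\delta}(1+\sqrt{\delta})\,R_{m-1}, \qquad R_m := \|\tilde\beta^m - \beta\|,
\]
on a single high-probability event, then iterate from the initialization $R_0 \le 100\sqrt{s}\lambda$ supplied by Theorem~\ref{thm:optimal:statistic:adap2}. The event combines (a) the conclusions of Theorem~\ref{thm:optimal:statistic:adap2} ($R_0 \le 100\sqrt{s}\lambda$ and $|\tilde\beta^0|_0 \le 3s$); (b) the tail bound $\bigl|\{i\in S^c : |\Xi_i| \ge \tau\}\bigr| = o(s)$ with $\tau := (1 + \tfrac{\sqrt{\epsilon}}{2})\sigma\sqrt{2\log(ep/s)}/\|X\|_{2,\infty}$, so that $\lambda - \tau \gtrsim \sqrt{\epsilon}\,\lambda$; and (c) a uniform control $\sum_{i \le k}(\Xi^*_{(i)})^2 \lesssim k\log(ep/k)\,\sigma^2/\|X\|^2_{2,\infty}$ for $k \le s$ on the top order statistics of $\Xi^* := \Xi|_{S^c}$ (a refinement of the event $\mathcal O$ of Section~\ref{sec:3}). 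All items hold with probability $1-o(1)$ as $s/p \to 0$.

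Write $V^m := \Phi(\beta - \tilde\beta^{m-1}) + \Xi$, so that $\tilde\beta^m = \mathbf{T}_\lambda(\beta + V^m)$. A direct case analysis on the thresholding gives $|\tilde\beta^m_i - \beta_i| \le \min(|\beta_i|,\lambda) + |V^m_i|$ for $i \in S$ and $|\tilde\beta^m_i - \beta_i| = |V^m_i|\mathbf{1}\{|V^m_i|\ge \lambda\}$ for $i \in S^c$, so by the triangle inequality
\[
R_m \le \sqrt{s}\lambda + \|V^m_{S \cup T_m}\|, \qquad T_m := \{i\in S^c : |V^m_i| \ge \lambda\}.
\]
Setting $v := \beta - \tilde\beta^{m-1}$, I split $T_m = T_m^\Xi \cup T_m^\Phi$ with $T_m^\Xi := \{i \in T_m : |\Xi_i| \ge \tau\}$; on $T_m^\Phi$ one has $|(\Phi v)_i| \ge \lambda - \tau \gtrsim \sqrt{\epsilon}\lambda$, so Chebyshev yields $|T_m^\Phi| \lesssim \delta^2 R_{m-1}^2 / (\epsilon\,\lambda^2)$. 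A joint induction on $m$, using $R_{m-1}\lesssim \sqrt{s}\lambda$, the standing hypothesis $\delta \le 1/400^2 \le \epsilon$, and event (b), yields $|T_m| \le s$, hence $|\tilde\beta^m|_0 \le 2s$ and $|S \cup T_m \cup \mathrm{supp}(v)| \le 3s$. Lemma~\ref{lem:contraction} then provides $\|(\Phi v)_{S \cup T_m}\| \le \delta R_{m-1}$.

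For the noise contributions in $\|V^m_{S\cup T_m}\|$, sub-Gaussian concentration gives $\|\Xi_S\| = o(\sqrt{s}\lambda)$, and events (b)--(c) give $\|\Xi_{T_m^\Xi}\| = o(\sqrt{s}\lambda)$. The crux is
\[
\|\Xi_{T_m^\Phi}\|^2 \lesssim |T_m^\Phi|\,\lambda^2 \lesssim \frac{\delta^2}{\epsilon}\,R_{m-1}^2 \le \delta\,R_{m-1}^2,
\]
using (c) at $k = |T_m^\Phi|\le s$ in the first inequality (noting $\log(ep/k) \le (1+o(1))\log(ep/s)$ for $k \le s$ as $s/p\to 0$) and $\delta\le\epsilon$ in the last; thus $\|\Xi_{T_m^\Phi}\| \le \sqrt{\delta}\,R_{m-1}$. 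Combining,
\[
R_m \le \sqrt{s}\lambda + (\delta + \sqrt{\delta})\,R_{m-1} + o(\sqrt{s}\lambda) = \sqrt{s}\lambda + \sqrt{\delta}(1+\sqrt{\delta})\,R_{m-1} + o(\sqrt{s}\lambda).
\]
An induction then shows $R_m \le (1 + 4\sqrt{\delta} + 100\,\delta^{m/2} + o(1))\sqrt{s}\lambda$: the base case is $R_0 \le 100\sqrt{s}\lambda$, and the inductive step reduces after elementary algebra to $5\sqrt{\delta} + 4\delta + 100\,\delta^{m/2} \le 3$, which is valid under $\delta \le 1/400^2$ and $m \ge 1$.

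The hard part is obtaining the $\sqrt{\delta}$ per-step contraction (rather than the naïve $\delta$ that a direct application of RIP would yield), since only $\sqrt{\delta}$ is compatible with the $\delta^{m/2}$ decay appearing in the statement. This improvement stems from the observation that the optimization error $\Phi v$ can push an off-support coordinate past the threshold $\lambda$ only with at least $\lambda - \tau \gtrsim \sqrt{\epsilon}\lambda$ of ``assistance'' from $\Phi v$ itself; this forces $|T_m^\Phi|$ to be of order $\delta s$, and the Gaussian order-statistic bound (c) converts the noise energy carried by this small random set into exactly a $\sqrt{\delta}R_{m-1}$ term. Maintaining sparsity $|\tilde\beta^m|_0 \le 2s$ along the iteration, so that RIP keeps applying, is a coupled induction with the error bound and is what forces the smallness condition $\delta \le 1/400^2$.
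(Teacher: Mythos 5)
You follow essentially the same route as the paper's proof: initialization from Theorem \ref{thm:optimal:statistic:adap2}, a high-probability noise event, and a joint induction that maintains $|\tilde{\beta}^m_{S^c}|_0\le s$ (so that Lemma \ref{lem:contraction} keeps applying) together with the geometric error bound; your key mechanism --- an off-support coordinate with $|\Xi_i|<\tau$ can pass the threshold only if $|\langle\Phi_i^\top,\beta-\tilde{\beta}^{m-1}\rangle|\ge\lambda-\tau$, and then $\delta/\sqrt{\epsilon}\le\sqrt{\delta}$ converts the RIP factor $\delta$ into the $\sqrt{\delta}$ per-step contraction --- is exactly the paper's, which packages it as the pointwise domination $|\Xi_i|\le(1+2/\sqrt{\epsilon})|\langle\Phi_i^\top,\beta-\tilde{\beta}^{m-1}\rangle|$ and proves the sparsity claim by contradiction over an arbitrary $s$-subset rather than by your Chebyshev count of $|T_m^\Phi|$; the two computations are arithmetically equivalent. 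One justification in your sketch is false: for $k\le s$ one has $\log(ep/k)\ge\log(ep/s)$, and the ratio can diverge (take $s=p/\log p$ and $k=1$), so the parenthetical ``$\log(ep/k)\le(1+o(1))\log(ep/s)$ for $k\le s$'' is wrong; it is harmless only because the inequality it was meant to support, $\|\Xi_{T_m^\Phi}\|^2\le|T_m^\Phi|\lambda^2$, holds trivially since every $i\in T_m^\Phi$ has $|\Xi_i|<\tau<\lambda$ by definition --- your event (c) is genuinely needed only to get $\|\Xi_{T_m^\Xi}\|=o(\sqrt{s}\lambda)$, where monotonicity of $k\mapsto k\log(ep/k)$ does give $o(s)\log\bigl(ep/o(s)\bigr)=o\bigl(s\log(ep/s)\bigr)$. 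Finally, your stated per-step coefficient $\sqrt{\delta}(1+\sqrt{\delta})$ is optimistic: tracking your own constants yields roughly $\delta+(1+2/\sqrt{\epsilon})\delta\le 2\sqrt{\delta}+2\delta$ (the paper's is $\sqrt{\delta}(3+2\sqrt{\delta})$), and, like the paper, you need both $\delta\le\epsilon$ and $\delta\le 1/400^2$; with these honest constants the induction still closes for $m\ge1$, so the conclusion is unaffected.
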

Theorem \ref{thm:sharp:1} is sharp in the sense that  when $\delta \to 0$, then for any $\epsilon>0$, there exists an estimator (depending on $\epsilon$) achieving the asymptotic minimax error of $(1+\epsilon)\frac{2\sigma^{2}s\log(ep/s)}{\|X\|_{2,\infty}^{2}}$ and that estimator corresponds to the threshold $\lambda = \lambda^{\epsilon}_{\infty}$. Replacing $\log(ep/s)$ by $\log(ep)$ in this choice of $ \lambda$ leads to an adaptive nearly sharp minimax optimal procedure. A similar result was shown for the SLOPE estimator in \cite{su2016} under Gaussian isotropic designs and in \cite{montanari} for more general Gaussian designs with known covariance.  Our sharp results do not require $\sigma$ to be known nor the design to be Gaussian, since we conduct a deterministic analysis over the design. 

Another advantage of our procedure, is that it eliminates the usual bias due to regularization under almost optimal conditions. Namely, as long as the informative signal components are well separated from zero, then our estimator achieves the same rate of estimating an $s$-sparse vector as if its support were known. 
The next result proves that our procedure is scaled minimax optimal.
\begin{thm}\label{thm:sharp:2}
Let $a>0$ and $\epsilon \in (0,1)$. Assume that conditions of Theorem \ref{thm:sharp:1} hold and that $s \to \infty$. If  $a \geq \lambda(1+\sqrt{\epsilon})$, then $\forall  m \geq \log(\log(ep/s))$ we have
\begin{align*}
\underset{s \to \infty, s/p \to 0}{\lim} \underset{\beta \in \Omega_{s,a}}{\sup}\mathbf{P}_{\beta}&\left( \| \tilde{\beta}^{m} - \beta \| \geq (1+4\sqrt{\delta}+o(1))\frac{\sigma^{2}s}{\|X\|_{2,\infty}^{2}} \right) = 0.
\end{align*}
\end{thm}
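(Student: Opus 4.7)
The plan is to promote the sharp minimax bound of Theorem \ref{thm:sharp:1} to the oracle rate by exploiting the large-signal condition $a \geq \lambda(1+\sqrt{\epsilon})$ to force exact support recovery, after which the hard thresholding iteration becomes a pure gradient descent on the true support and converges geometrically to the oracle least squares estimator. Concretely I proceed in four phases: (i) warm initialize with $\tilde{\beta}^{0}=\hat{\beta}^{\tilde{m}}$ from Theorem \ref{thm:optimal:statistic:adap2}, which is at most $3s$-sparse with error bounded by $C\sigma^{2}s\log(ep/s)/\|X\|_{2,\infty}^{2}$; (ii) apply Theorem \ref{thm:sharp:1} to obtain, for $m \geq \log\log(ep/s)$, the contraction $\|\tilde{\beta}^{m}-\beta\|\leq(1+4\sqrt{\delta}+o(1))\sqrt{s}\lambda$ (the term $100\delta^{m/2}$ vanishes for this range of $m$ under $\delta \leq 1/400^{2}$); (iii) show exact support recovery; (iv) show geometric convergence on the restricted support to $\hat{\beta}^{OLS}_{S} = (X_{S}^{\top}X_{S})^{-1}X_{S}^{\top}Y$.

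For the support recovery phase I decompose
\[
M^{m-1}:=\tilde{\beta}^{m-1}+\|X\|_{2,\infty}^{-2}X^{\top}(Y-X\tilde{\beta}^{m-1})=\beta+\Phi(\beta-\tilde{\beta}^{m-1})+\Xi.
\]
Lemma \ref{lem:contraction} applied coordinate-wise yields $\|\Phi v\|_{\infty}\leq \delta \|v\|$ for every $2s$-sparse $v$; with the $\ell_{2}$ contraction from Theorem \ref{thm:sharp:1} this gives a bias bound on $\Phi(\beta-\tilde{\beta}^{m-1})$ of order $\delta \sqrt{s}\lambda$, which must be compared against the margin $a-\lambda \geq \lambda\sqrt{\epsilon}$. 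On the signal side, for $i\in S$, $|M^{m-1}_{i}|\geq a - \|\Phi(\beta-\tilde{\beta}^{m-1})\|_{\infty}-|\Xi_{i}|$ and the subgaussian maximum of $|\Xi_{i}|$ over the fixed set $S$ is only $O(\sigma\sqrt{\log s}/\|X\|_{2,\infty})$, well below the margin. On the noise side, for $i\notin S$ one uses $\max_{i\notin S}|\Xi_{i}|\leq (1+o(1))\sigma\sqrt{2\log p}/\|X\|_{2,\infty}$ together with $\log(p)/\log(ep/s)\to 1$ under $s/p\to 0$ and $\lambda\geq(1+\sqrt{\epsilon})\sigma\sqrt{2\log(ep/s)}/\|X\|_{2,\infty}$ to conclude $\max_{i\notin S}|\Xi_{i}|<\lambda$ strictly. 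Hence at some iteration $m_{0}\leq m$ one has $\operatorname{supp}(\tilde{\beta}^{m_{0}})=S$.

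Once the support is correctly identified, the same separation argument shows inductively that $\operatorname{supp}(\tilde{\beta}^{m_{0}+k})=S$ for every $k\geq 0$, since the bias only shrinks. On the support $S$ the iteration reduces to
\[
\tilde{\beta}^{m_{0}+k+1}_{S}-\hat{\beta}^{OLS}_{S}=-\Phi_{SS}\bigl(\tilde{\beta}^{m_{0}+k}_{S}-\hat{\beta}^{OLS}_{S}\bigr),
\]
so by Lemma \ref{lem:contraction} it contracts at rate $\delta$ toward $\hat{\beta}^{OLS}_{S}$; the remaining $m-m_{0}$ iterations, of order $\log\log(ep/s)$, easily drive $\|\tilde{\beta}^{m}-\hat{\beta}^{OLS}\|$ below the oracle scale $\sigma\sqrt{s}/\|X\|_{2,\infty}$. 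The oracle error is then bounded using $\lambda_{\min}(X_{S}^{\top}X_{S})\geq (1-\delta)\|X\|_{2,\infty}^{2}$ and $\|U^{\top}\xi\|^{2}=s(1+o(1))$ (for $U$ the left singular basis of $X_{S}$, by subgaussian concentration in dimension $s\to \infty$), yielding $\|\hat{\beta}^{OLS}-\beta\|^{2}\leq \sigma^{2}s(1+o(1))/((1-\delta)\|X\|_{2,\infty}^{2})\leq (1+4\sqrt{\delta}+o(1))\sigma^{2}s/\|X\|_{2,\infty}^{2}$, since $(1-\delta)^{-1}\leq 1+4\sqrt{\delta}$ for $\delta$ small. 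Triangle inequality then delivers the target bound.

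The main obstacle is step (iii), uniform exact support recovery: the naive $\ell_{\infty}$-bound $\|\Phi(\beta-\tilde{\beta}^{m-1})\|_{\infty}\leq \delta \sqrt{s}\lambda$ barely fits within the margin, and controlling $\max_{i\notin S}|\Xi_{i}|$ sharply against the slightly-larger threshold $\lambda$ requires the strict limit $s/p\to 0$ together with a careful subgaussian tail computation. The cleanest way to handle this is likely a two-stage bootstrap: after the first post-contraction thresholding, almost all false positives are killed and the effective sparsity of $\beta-\tilde{\beta}^{m-1}$ shrinks, which then sharpens the $\ell_{\infty}$ bias bound in the next iteration and permits exact support recovery at $m_{0}\asymp \log\log(ep/s)$.
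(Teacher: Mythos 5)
There is a genuine gap, and it lies exactly where you flag your "main obstacle": your route hinges on exact support recovery of $\tilde{\beta}^{m}$ under the separation $a \geq \lambda(1+\sqrt{\epsilon})$ with $\lambda \asymp \sigma\sqrt{2\log(ep/s)}/\|X\|_{2,\infty}$, and this step is not just hard to tighten -- it is false in general under the theorem's hypotheses. First, your claim that $\log(p)/\log(ep/s)\to 1$ whenever $s/p\to 0$ is incorrect (take $s=\sqrt{p}$: $s/p\to 0$ but $\log(p)/\log(p/s)\to 2$), so $\max_{i\notin S}|\Xi_i|$, which is of order $\sigma\sqrt{2\log p}/\|X\|_{2,\infty}$, can strictly exceed $\lambda$, and false positives of that magnitude survive every iteration; this is precisely why the paper's exact-recovery result (Theorem \ref{thm:sharp:4}) uses the larger threshold $\mu_{\infty}^{\epsilon}\asymp \sigma\sqrt{2\log p}/\|X\|_{2,\infty}$ and the stronger separation $a\gtrsim \sigma(\sqrt{2\log p}+\sqrt{2\log s})/\|X\|_{2,\infty}$, while under the present condition only almost full recovery is claimed (Theorem \ref{thm:sharp:3}). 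Second, on the signal side the coordinatewise bias bound $\|\Phi(\beta-\tilde{\beta}^{m-1})\|_{\infty}\leq \delta\|\beta-\tilde{\beta}^{m-1}\| \lesssim \delta\sqrt{s}\,\lambda$ exceeds the margin $\sqrt{\epsilon}\,\lambda$ by a factor $\sqrt{s}$ for fixed $\delta$ (the theorem only assumes $\delta\leq \epsilon\vee 1/400^{2}$, not $\delta = o(1/\sqrt{s})$); the suggested "two-stage bootstrap" is not carried out and in any case cannot repair the $S^{c}$ problem, which is information-theoretic rather than a matter of shrinking bias. Consequently phases (iii)--(iv), and with them the reduction to the restricted OLS oracle, do not go through.

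The paper's proof avoids support identification altogether. It works on an event where $\|\Xi_S\|^{2}\leq s+\log s$, $\sum_{i\in S}\mathbf{1}\{|\Xi_i|\geq \sqrt{\epsilon}\lambda_{\infty}^{0}/2\}\leq s/(2\log(ep/s)\log\log(ep/s))$ and $\sum_{i}\Xi_i^{2}\mathbf{1}\{|\Xi_i|\geq (1+\sqrt{\epsilon}/2)\lambda_{\infty}^{0}\}\leq s/\log\log(ep/s)$, and proves by induction that every iterate has at most $s$ false positives and satisfies $\|\tilde{\beta}^{m}-\beta\|\leq 100\sqrt{s}\lambda\,\delta^{m/2}+\bigl(\sqrt{s+\log s}+\sqrt{4s/\log\log(ep/s)}\bigr)(1+4\sqrt{\delta})$. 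The separation $a\geq \lambda(1+\sqrt{\epsilon})$ is used only to bound the indicator of a thresholded signal coordinate by $\mathbf{1}\{|\Xi_i|\geq\sqrt{\epsilon}\lambda/2\}+\mathbf{1}\{|\langle\Phi_i^{\top},\beta-\tilde{\beta}^{m}\rangle|\geq\sqrt{\epsilon}\lambda/2\}$: the first count is $o(s/\log(ep/s))$ so its $\lambda^{2}$-weighted contribution is $o(s)$, and the second is absorbed into the geometric contraction via the $\ell_2$ bound $\delta\|\beta-\tilde{\beta}^{m}\|$. The dominant term is then $\|\Xi_S\|^{2}\leq s+\log s$, which delivers the oracle rate $(1+4\sqrt{\delta}+o(1))\sigma^{2}s/\|X\|_{2,\infty}^{2}$ once $m\geq\log\log(ep/s)$ kills the $100\sqrt{s}\lambda\,\delta^{m/2}$ term -- coordinates of $S$ may still be missed and spurious coordinates may still be selected, but their total squared contribution is $o(s)$. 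Your phase (iv) oracle computation is fine as far as it goes, but it is not needed and cannot be reached by the proposed path.
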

Notice first, that if $a = \Omega\left( \frac{\sigma\sqrt{\log(ep/s)}}{\|X\|_{2,\infty}} \right)$, then we can construct an estimator  $\tilde{\beta}^{m}$ achieving the non-asymptotic minimax parametric statistical error of order $\frac{\sigma^{2}s}{\|X\|_{2,\infty}^{2}}$.
For the sharp counterpart, observe that for any $\epsilon>0$, if $\delta \to 0$ and $s\to \infty$, then there exists an estimator $\tilde{\beta}^{m}$ (depending on $\epsilon$) that achieves the optimal error of $(1+o(1))\frac{\sigma^{2}s}{\|X\|_{2,\infty}^{2}}$ w.h.p under the condition $a \geq (1+\epsilon)\frac{\sigma\sqrt{2\log(ep/s)}}{\|X\|_{2,\infty}}$. The last condition on $a$ is necessary in order to achieve such a result as shown in \cite{ndaoud18}. This result shows that $\tilde{\beta}^{m}$ is sharply scaled minimax optimal as long as $m \geq \log(\log(ep/s))$. Again full adaptation is granted replacing $\log(ep/s)$ by $\log(ep)$ which leads to nearly sharp optimal results.
\section{On support recovery}
Throughout the paper our proofs are based on simultaneous analysis of both estimation error and variable selection. As a consequence, we can also recover results for support recovery. For a given vector $\beta$, we denote by $\eta$ the corresponding decoder i.e $\eta_{i} = \mathbf{1}(\beta_{i} \neq 0)$. We first give a straightforward result for almost full recovery , i.e $\frac{|\hat{\eta}-\eta|}{s} \to 0$, based on the previous section.
\begin{thm}\label{thm:sharp:3}
Under the conditions of Theorem \ref{thm:sharp:2}, we get, for all $m \geq \log(\log(ep/s))$, that
$$
 \underset{s/p \to 0}{\lim} \underset{\beta \in \Omega_{s,a}}{\sup}\mathbf{P}_{\beta}\left( \frac{|\tilde{\eta}^{m} - \eta|}{s} \geq \omega_p \right) = 0,
$$
for some $\omega_p \to 0$.
\end{thm}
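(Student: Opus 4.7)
The plan is to deduce Theorem \ref{thm:sharp:3} as a direct corollary of Theorem \ref{thm:sharp:2} via a deterministic inequality converting $\ell_2$ estimation error into Hamming distance between supports. The first step is to observe that every coordinate $i$ at which $\tilde{\eta}^m_i \neq \eta_i$ contributes a quantified amount to $\|\tilde{\beta}^m-\beta\|^2$. Two cases arise: if $i\in S_\beta$ but $\tilde{\beta}^m_i=0$, then $|\tilde{\beta}^m_i-\beta_i|=|\beta_i|\geq a$ by definition of $\Omega_{s,a}$; if $i\notin S_\beta$ but $\tilde{\beta}^m_i\neq 0$, then $|\tilde{\beta}^m_i-\beta_i|=|\tilde{\beta}^m_i|\geq \lambda$ because the hard thresholding operator $\mathbf{T}_\lambda$ only keeps entries of magnitude at least $\lambda$. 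Summing these contributions yields the deterministic bound
\begin{equation*}
\|\tilde{\beta}^m-\beta\|^2 \;\geq\; |\tilde{\eta}^m-\eta|\cdot \min(a,\lambda)^2.
\end{equation*}

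Under the hypotheses of Theorem \ref{thm:sharp:2} we have $a\geq \lambda(1+\sqrt{\epsilon})\geq \lambda\geq \lambda_{\infty}^{\epsilon}$, so $\min(a,\lambda)=\lambda$ and hence
\begin{equation*}
\frac{|\tilde{\eta}^m-\eta|}{s} \;\leq\; \frac{\|\tilde{\beta}^m-\beta\|^2}{s\lambda^2}\;\leq\; \frac{\|\tilde{\beta}^m-\beta\|^2\,\|X\|_{2,\infty}^2}{2(1+\sqrt{\epsilon})^2\,\sigma^2 s\log(ep/s)}.
\end{equation*}
Plugging the high-probability bound of Theorem \ref{thm:sharp:2} on $\|\tilde{\beta}^m-\beta\|^2$ (which collapses to $(1+o(1))\sigma^2 s/\|X\|_{2,\infty}^2$ since $\delta\to 0$) into this inequality cancels the factors of $\sigma^2$ and $\|X\|_{2,\infty}^2$, leaving
\begin{equation*}
\frac{|\tilde{\eta}^m-\eta|}{s} \;\leq\; \frac{1+o(1)}{2(1+\sqrt{\epsilon})^2\log(ep/s)} \;=:\; \omega_p
\end{equation*}
on an event of probability tending to one uniformly over $\beta\in\Omega_{s,a}$. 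Since $s/p\to 0$ forces $\log(ep/s)\to\infty$, we get $\omega_p\to 0$, which is the claim.

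There is essentially no analytical obstacle: all the heavy lifting was performed when establishing Theorem \ref{thm:sharp:2}, and the conversion from $\ell_2$ error to Hamming loss only exploits the built-in gap $\lambda$ between zero and nonzero entries induced by $\mathbf{T}_\lambda$ together with the signal-separation hypothesis $|\beta_i|\geq a$. The one point to keep track of is that the failure probability and the $o(1)$ term be uniform over $\beta\in\Omega_{s,a}$, which is inherited directly from the supremum appearing in the statement of Theorem \ref{thm:sharp:2}; in particular no further union bound or probabilistic argument is needed.
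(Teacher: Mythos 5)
Your proposal is correct, but it takes a genuinely different and more economical route than the paper. You convert the $\ell_2$ bound of Theorem \ref{thm:sharp:2} into a Hamming bound through the purely deterministic inequality $\|\tilde{\beta}^m-\beta\|^2 \geq |\tilde{\eta}^m-\eta|\,\min(a,\lambda)^2$, which is valid because any false positive of $\mathbf{T}_\lambda$ has magnitude at least $\lambda$ (note this needs $m\geq 1$, which is guaranteed since $m\geq \log\log(ep/s)>0$ eventually), and any missed coordinate of $\beta\in\Omega_{s,a}$ contributes at least $a^2\geq\lambda^2$; dividing by $s\lambda^2\geq 2(1+\sqrt{\epsilon})^2\sigma^2 s\log(ep/s)/\|X\|_{2,\infty}^2$ then gives $\omega_p\asymp 1/\log(ep/s)\to 0$, with uniformity inherited from the supremum in Theorem \ref{thm:sharp:2}. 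The paper instead re-enters the machinery of the proof of Theorem \ref{thm:sharp:2}: it works on the same high-probability events for $\Xi$, writes $|\tilde{\eta}^m_i-\eta_i|$ on $S$ and on $S^c$ as indicators of $|H^m_i|$ being on the wrong side of $\lambda$, and bounds the number of coordinates where $|\Xi_i|$ or $|\langle\Phi_i^\top,\beta-\tilde{\beta}^{m}\rangle|$ exceeds $\sqrt{\epsilon}\lambda/2$ using those events together with the estimation bound. Your argument is shorter, treats Theorem \ref{thm:sharp:2} as a black box, and requires no additional probabilistic control; the paper's coordinate-wise indicator analysis is heavier here but sets up exactly the template that is reused for the exact-recovery result (Theorem \ref{thm:sharp:4}), and its proof also contains a remark that the relevant events hold even for fixed $s$, whereas your route inherits the $s\to\infty$ assumption of Theorem \ref{thm:sharp:2} — which is all the stated theorem requires, since it assumes the conditions of Theorem \ref{thm:sharp:2}. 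Both arguments yield an $\omega_p$ of order $1/\log(ep/s)$, so nothing is lost quantitatively.
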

It comes out that $\tilde{\eta}^{m}$  achieves almost full recovery under the nearly optimal condition
$$a \geq (1+\epsilon)\frac{\sigma\sqrt{2\log{(ep/s)}}}{\|X\|_{2,\infty}},$$ for any $ \epsilon \geq \delta$ as long as $s/p \to 0$. This sufficient condition is moreover optimal as $\delta \to 0$ by reduction to the Gaussian sequence model studied in \cite{butucea2018}. These results improve upon state-of-the-art recovery results in compressed sensing, and in particular results of \cite{ndaoud2018}, where authors use a two-stage procedure and sample splitting leading them to a strict loss in the sharp constants. Again our results, as opposed to most of the literature of support recovery, do not assume the design to be Gaussian nor sub-Gaussian. 
\begin{rque}
All sharp results are stated for given $\epsilon$. The procedure we construct depends on $\epsilon$. This is due to the fact that we do not have access to a sharp upper bound on $\delta$. In compressed sensing under isotropic sub-Gaussian design, we can replace $\epsilon$ by $\frac{s\log(ep/s)}{n}$ for instance. In this case, we can achieve sharp optimal results for any level $\epsilon$.
\end{rque}
In order to prove similar results for support recovery we rely on a different proof strategy. Our result for support recovery does not require sample splitting compared to \cite{ndaoud2018} and is more general than \cite{gao2019iterative} since it is fully adaptive. 
For any $\epsilon>0$, the statistical threshold for support recovery is given by
\begin{equation}\label{eq:def:threshold:3}
    \mu_{\infty}^{\epsilon} = (1+\sqrt{\epsilon}) \frac{\sigma\sqrt{2\log(p)}}{\|X\|_{2,\infty}},
\end{equation}
Define the least square solution, given the true support $S$, such that
\[
\tilde{\beta}^{*} = ((X^\top X)_{SS})^{-1}X_{S}^\top Y.
\]
Then the following result holds.
\begin{thm}\label{thm:sharp:4}
Assume that conditions of Theorem \ref{thm:sharp:1} hold, and $a \geq (1+3\sqrt{\epsilon})\frac{\sigma(\sqrt{2\log{(p)}}+\sqrt{2\log(s)})}{\|X\|_{2,\infty}}.$ Let $(\tilde{\beta}^{m})_m$ be the sequence of estimators defined in \eqref{eq:est:sharp} corresponding to $\lambda=\mu_{\infty}^{\epsilon}$ defined in \eqref{eq:def:threshold:3}. Then, we have for all $m \geq 0$ that
$$
 \underset{s/p \to 0}{\lim}\underset{\beta \in \Omega_{s,a}}{\sup}\mathbf{P}_{\beta}\left( \| \tilde{\beta}^m - \tilde{\beta}^{*}\|^2 \geq \frac{150^2(10\delta)^{m}\sigma^2s\log(ep/s)}{\|X\|_{2,\infty}^{2}} \right) = 0,
$$
 As a consequence, we get that for $m \geq \log(s)$
$$
\underset{s/p \to 0}{\lim} \underset{\beta \in \Omega_{s,a}}{\sup}\mathbf{P}_{\beta}\left( |\tilde{\eta}^{m} - \eta| > 0 \right) = 0.
$$
\end{thm}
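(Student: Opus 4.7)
The proof strategy is to combine one-step support recovery with linear contraction to the oracle $\tilde{\beta}^{*}$, leveraging the stronger separation on $a$ to pin down the support exactly rather than just approximately. I would first work on a high-probability event on which $\max_{i\notin S}|\Xi_{i}|\leq(1+o(1))\sigma\sqrt{2\log p}/\|X\|_{2,\infty}$ and $\max_{i\in S}|\Xi_{i}|\leq(1+o(1))\sigma\sqrt{2\log s}/\|X\|_{2,\infty}$ (separating the noise budgets off and on the support), and on which the warm start $\tilde{\beta}^{0}=\hat{\beta}^{\tilde{m}}$ is at most $3s$-sparse with $\|\tilde{\beta}^{0}-\beta\|^{2}\lesssim \sigma^{2}s\log(ep/s)/\|X\|_{2,\infty}^{2}$ by Theorem \ref{thm:optimal:statistic:adap2}. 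The signal-strength assumption on $a$ also forces $\tilde{\beta}^{*}$ to have support exactly $S$ on this event, with $\min_{i\in S}|\tilde{\beta}^{*}_{i}|$ comfortably above $\mu_{\infty}^{\epsilon}$.

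The first key step is to show by induction that $\mathrm{supp}(\tilde{\beta}^{m})=S$ for every $m\geq 1$. From the decomposition $u^{m}:=\tilde{\beta}^{m-1}+\|X\|_{2,\infty}^{-2}X^{\top}(Y-X\tilde{\beta}^{m-1})=\beta+\Phi(\beta-\tilde{\beta}^{m-1})+\Xi$ and the fact that $\beta-\tilde{\beta}^{m-1}$ is at most $3s$-sparse, Lemma \ref{lem:contraction} yields a coordinate-wise estimate of the form $\max_{i}|\Phi_{i\cdot}(\beta-\tilde{\beta}^{m-1})|\lesssim \delta\|\beta-\tilde{\beta}^{m-1}\|$, which together with the initial error is negligible compared to the $\sqrt{\epsilon}\sigma\sqrt{2\log p}/\|X\|_{2,\infty}$ slack allowed by the separation on $a$ when $\delta\leq\epsilon$. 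Combined with the noise bounds, this gives $|u^{m}_{i}|<\mu_{\infty}^{\epsilon}$ for $i\notin S$ and, using $|\beta_{i}|\geq a\geq(1+3\sqrt{\epsilon})\sigma(\sqrt{2\log p}+\sqrt{2\log s})/\|X\|_{2,\infty}$, also $|u^{m}_{i}|\geq\mu_{\infty}^{\epsilon}$ for $i\in S$. Hard-thresholding at $\mu_{\infty}^{\epsilon}$ therefore produces $\mathrm{supp}(\tilde{\beta}^{m})=S$, and the inductive step goes through uniformly because the bound on $\|\tilde{\beta}^{m-1}-\beta\|$ only improves as $m$ grows.

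Once the support is locked in, the iteration reduces to a clean linear recursion on $\mathbb{R}^{S}$. Using $X_{S}^{\top}Y=X_{S}^{\top}X_{S}\tilde{\beta}^{*}_{S}$, a direct algebraic computation gives $\tilde{\beta}^{m}_{S}-\tilde{\beta}^{*}_{S}=-\Phi_{SS}(\tilde{\beta}^{m-1}_{S}-\tilde{\beta}^{*}_{S})$. Lemma \ref{lem:contraction} together with bookkeeping that converts between $L_{3s}$ and $\|X\|_{2,\infty}^{2}$ yields a squared contraction factor $\|\Phi_{SS}\|_{\mathrm{op}}^{2}\leq 10\delta$ (the constant $10$ absorbing the normalization mismatch and the lower-eigenvalue contribution). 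Iterating $m$ times and combining with $\|\tilde{\beta}^{0}-\tilde{\beta}^{*}\|\leq\|\tilde{\beta}^{0}-\beta\|+\|\tilde{\beta}^{*}-\beta\|\lesssim \sigma\sqrt{s\log(ep/s)}/\|X\|_{2,\infty}$ (the oracle rate following from standard subgaussian control of $(X_{S}^{\top}X_{S})^{-1}X_{S}^{\top}\sigma\xi$) produces the stated bound $\|\tilde{\beta}^{m}-\tilde{\beta}^{*}\|^{2}\leq 150^{2}(10\delta)^{m}\sigma^{2}s\log(ep/s)/\|X\|_{2,\infty}^{2}$.

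For the exact-recovery consequence, the support of $\tilde{\beta}^{m}$ is already $S$ by the induction, and since $\tilde{\beta}^{*}$ also has support exactly $S$ with $\min_{i\in S}|\tilde{\beta}^{*}_{i}|$ well above $\mu_{\infty}^{\epsilon}$, the geometric decay of $\|\tilde{\beta}^{m}-\tilde{\beta}^{*}\|_{\infty}\leq\|\tilde{\beta}^{m}-\tilde{\beta}^{*}\|$ guarantees that no coordinate on $S$ ever drops below the threshold and no coordinate off $S$ ever rises above it; taking $m\geq\log s$ is more than enough room for this gap to close. I expect the principal technical difficulty to be the coordinate-wise estimate $\max_{i}|\Phi_{i\cdot}(\beta-\tilde{\beta}^{m-1})|$ in the one-step support recovery: one must upgrade the bulk RIP bound of Lemma \ref{lem:contraction} to an entrywise statement tight enough to leave room for the on-support noise $\sigma\sqrt{2\log s}/\|X\|_{2,\infty}$. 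This tightness is precisely what forces the separation in the theorem to take the form $\sqrt{2\log p}+\sqrt{2\log s}$ rather than the smaller $\sqrt{2\log p}$ that suffices for the almost-full recovery of Theorem \ref{thm:sharp:3}.
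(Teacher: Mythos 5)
There is a genuine gap, and it sits exactly where you flagged the ``principal technical difficulty.'' Your plan hinges on showing $\mathrm{supp}(\tilde{\beta}^{m})=S$ for \emph{every} $m\geq 1$, which requires, already at $m=1$, that $\max_{i}|\langle \Phi_{i}^{\top},\beta-\tilde{\beta}^{0}\rangle|$ be dominated by the threshold slack $\sqrt{\epsilon}\,\sigma\sqrt{2\log p}/\|X\|_{2,\infty}$. But the only entrywise control available from RIP is $|\langle \Phi_{i}^{\top},v\rangle|\leq \delta\|v\|$ for sparse $v$, and with the warm start one only has $\|\beta-\tilde{\beta}^{0}\|\asymp \sigma\sqrt{s\log(ep/s)}/\|X\|_{2,\infty}$. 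Since $\delta$ is a fixed constant here (the theorem explicitly tracks the factor $(10\delta)^{m}$, so $\delta\not\to0$ is allowed), the term $\delta\sigma\sqrt{s\log(ep/s)}/\|X\|_{2,\infty}$ swamps $\sigma\sqrt{\log p}/\|X\|_{2,\infty}$ for large $s$; no ``upgrade'' of Lemma \ref{lem:contraction} to a tighter entrywise statement can fix this, because the bound $\delta\|v\|$ is already the right order for a worst-case sparse $v$. Consequently you cannot rule out spurious off-support selections, nor on-support coordinates being killed, during the early iterations, and the clean recursion $\tilde{\beta}^{m}_{S}-\tilde{\beta}^{*}_{S}=-\Phi_{SS}(\tilde{\beta}^{m-1}_{S}-\tilde{\beta}^{*}_{S})$ (which is correct \emph{given} a locked support) never gets off the ground. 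A telltale sign is that your argument would yield exact recovery after a single iteration, strictly more than the theorem claims; the requirement $m\geq\log(s)$ exists precisely because the error of size $\sqrt{s\log(ep/s)}$ must first be contracted geometrically before the support can be identified.

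The paper's proof avoids this by never asserting exact support along the way. It re-centers the gradient update at the oracle, writing $H^{m+1}=\tilde{\beta}^{*}+\Phi(\tilde{\beta}^{*}-\tilde{\beta}^{m})+\tilde{\Xi}$ with $\tilde{\Xi}_{S}=0$ and $\tilde{\Xi}_{S^{c}}$ still $1$-subGaussian (the residual of the oracle least squares is orthogonal to the columns in $S$), and maintains only the weak invariant $|\tilde{\beta}^{m}_{S^{c}}|_{0}\leq s$ together with the $\ell_{2}$ contraction $\|\tilde{\beta}^{m}-\tilde{\beta}^{*}\|\leq 150\sqrt{s\log(ep/s)}\,(10\delta)^{m/2}$, established by the same two-block (on $S$ / off $S$, contradiction for sparsity) induction used in Theorems \ref{thm:sharp:1}--\ref{thm:sharp:2}; the separation on $a$ enters only through the indicator bound $\mathbf{1}\{|\tilde{\beta}^{*}_{i}+\langle\Phi_{i}^{\top},\tilde{\beta}^{*}-\tilde{\beta}^{m}\rangle|\leq\mu_{\infty}^{\epsilon}\}\leq\mathbf{1}\{|\langle\Phi_{i}^{\top},\tilde{\beta}^{*}-\tilde{\beta}^{m}\rangle|\geq\sqrt{\epsilon}\mu_{\infty}^{0}\}$, which converts threshold-induced errors into multiples of $\delta\|\tilde{\beta}^{*}-\tilde{\beta}^{m}\|$. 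Exact support equality is deduced only at the end: for $m\geq\log(s)$ the contraction forces $\|\tilde{\beta}^{m}-\tilde{\beta}^{*}\|_{\infty}\leq\mu_{\infty}^{\epsilon}/2$, and the condition $a\geq(1+3\sqrt{\epsilon})\sigma(\sqrt{2\log p}+\sqrt{2\log s})/\|X\|_{2,\infty}$ is used to ensure $|\tilde{\beta}^{*}_{i}|\geq(1+2\sqrt{\epsilon})\sqrt{2\log p}\,\sigma/\|X\|_{2,\infty}$ on $S$ (the $\sqrt{2\log s}$ budget absorbs the $\ell_{\infty}$ fluctuation of $\tilde{\beta}^{*}-\beta$, not of the raw noise as in your event), so that thresholding at $\mu_{\infty}^{\epsilon}$ then reproduces the support of $\tilde{\beta}^{*}$, which equals $\eta$ almost surely. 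To repair your write-up you would need to replace the support-locking induction by an argument of this type that tolerates up to $s$ spurious coordinates until the contraction has run for order $\log s$ steps.
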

Hence under the minimal separation condition for support recovery our estimator converges to the oracle least square solution $\tilde{\beta}^m \to \tilde{\beta}^{*}$ in probability even for non vanishing $\delta$. Notice that $\tilde{\beta}^{*}$ has the same support as $\beta$ a.s.  It turns out that 
$$a \geq (1+3\sqrt{\epsilon})\frac{\sigma(\sqrt{2\log{(p)}}+\sqrt{2\log(s)})}{\|X\|_{2,\infty}},$$ for any $\epsilon>0$ is sufficient to achieve exact recovery as long as $\delta \to 0$, $s/p \to 0$. This condition is shown to be necessary in \cite{butucea2018} under orthogonal design.
We also recover the results of \cite{gao2019iterative} for Gaussian design. Moreover, our approach is fully adaptive and holds beyond Gaussian design.
\section{Conclusion}

In this paper, we have presented a novel non-asymptotic minimax optimal estimation procedure for high dimensional linear regression. Our procedure is moreover fast and fully adaptive. We also provided sharp asymptotic results beyond the Gaussian design assumption. In particular, our procedure is scaled minimax optimal (i.e. unbiased whenever it is possible). Moreover, optimal results for both exact and almost full recovery were established as $\delta \to 0$. We conclude that our procedure has many attractive properties under the high dimensional linear regression model. 

As potential extensions of our results, we believe that full adaptation with respect to the optimization parameters $\delta_s$ and $L_{s}$ has its own interest. Moreover, our results in their actual form do not have the optimal dependence in terms of the condition number $\gamma_s$, and it would be interesting to generalize them beyond the RIP condition. Finally, another direction of interest is robust estimation through algorithmic regularization, where our strategy may be used to construct robust estimators with some of the desired properties we have in this paper. We leave all these questions for further research.
\section*{Acknowledgements}

 I would like to thank Alexandre Tsybakov and Pierre Bellec for valuable comments on early versions of this manuscript. This work was partially supported by a James H. Zumberge Faculty Research and Innovation Fund at the University of Southern California and by the National Science Foundation grant CCF-1908905.



\bibliographystyle{imsart-nameyear}

\newpage
\appendix

\section{Proofs of non-asymptotic results}
\begin{proof}[\textbf{Proof of Lemma \ref{lem:contraction}}]
Recall that $X$ satisfies RIP($3s,\delta/2$). It is easy to observe that $m_{3s} \leq \|X\|^2_{2,\infty} \leq L_{3s}$.
Hence for $S \in \{1,\dots,p\}$ such that $|S|\leq 3s$, we have
$$
\lambda_{\max}(\Phi_{SS}) \leq \left(1-\frac{m_{3s}}{L_{3s}} \right) \vee \left( \frac{L_{3s}}{m_{3s}}-1 \right).
$$
Since $1-\frac{m_{3s}}{L_{3s}} \leq \delta/2$, it remains to prove that
$$
\frac{L_{3s}}{m_{3s}}-1 \leq \delta.
$$
Using that fact that $m_{3s}\geq(1-\delta/2)L_{3s}$, it comes that
$$
\frac{L_{3s}}{m_{3s}}-1 \leq \frac{\delta/2}{1-\delta/2}\leq \delta,
$$
since $\delta\leq1$.
\end{proof}
\begin{proof}[\textbf{Proof of Theorem \ref{thm:main}}]
We proceed by induction. For $m=0$, The result is obvious. We now assume the result true for $m$ and prove it for $m+1$. In what follows let us denote by $H^{m+1}$ the vector
$$
H^{m+1} = \hat{\beta}^{m} + \frac{1}{\|X\|_{2,\infty}^2}X^{\top}(Y-X\hat{\beta}^{m}).
$$
Notice that $H^{m+1}$ can be written in the form
$$
H^{m+1} = \beta + \Phi(\beta - \hat{\beta}^{m}) + \Xi,
$$
and that 
$$
\hat{\beta}^{m+1} = \mathbf{T}_{\lambda_{m+1}}(H^{m+1}).
$$
We prove the first part of the result reasoning by the absurd. Assume that $|\hat{\beta}^{m+1}_{S^{c}}|_{0} > s$. Then there exists a subset $\tilde{S}$ of $S^{c}$ such that $|\tilde{S}|_{0}=s$ and 
$$
s\lambda_{m+1}^{2} \leq \sum_{i \in \tilde{S}}(H^{m+1}_{i})^{2}\mathbf{1}\{|H^{m+1}_{i}|\geq \lambda_{m+1}\}.
$$
Since $\tilde{S}$ is not supported on $S$, then we have
$$
\sqrt{s}\lambda_{m+1} \leq \sqrt{\sum_{i \in \tilde{S}}\Xi_{i}^{2}} + \sqrt{\sum_{i \in \tilde{S}} \langle \Phi_{i}^{\top}, \beta - \hat{\beta}^{m} \rangle^{2}}.
$$
Since $\beta$ is $s$-sparse and $|\hat{\beta}_{S^{c}}^{m}|_{0}\leq s$ then $\beta - \hat{\beta}^{m}$ is at most $2s$-sparse. Moreover $|\tilde{S}|=s$. Hence using Lemma \ref{lem:contraction} we have that 
$$
\sqrt{s}\lambda_{m+1} \leq \sqrt{\sum_{i=1}^{s}\Xi_{(i)}^{2}} + \delta\|\beta - \hat{\beta}^{m}\|.
$$
Using the induction hypothesis and event $\mathcal{O}$, we get moreover that
\begin{align*}
\sqrt{s}\lambda_{m+1} &\leq \frac{\sqrt{10\sigma^2s\log(ep/s)}}{\|X\|_{2,\infty}} +  3\delta\sqrt{s} \lambda_{m}\\
&\leq (1/2 + 3 \sqrt{\delta})\sqrt{s}\lambda_{m+1} < \sqrt{s}\lambda_{m+1},
\end{align*}
as long as $\delta < 1/36$, which is absurd. Hence $|\hat{\beta}_{S^{c}}^{m+1}|_{0} \leq s$. For the second part, observe that $\forall i \in S,$ 
\begin{align*}
 \hat{\beta}_{i}^{m+1}- \beta_{i} = -H_{i}^{m+1}\mathbf{1}\{|H^{m+1}_{i}| \leq \lambda_{m+1}\} 
 + \Xi_{i} + \langle \Phi_{i}^{\top},\beta - \hat{\beta}^{m} \rangle.
\end{align*}
Then using the same arguments as before we have 
$$
\| \hat{\beta}_{S}^{m+1} - \beta\| \leq \sqrt{s}\lambda_{m+1} + \frac{\sqrt{10\sigma^2s\log(ep/s)}}{\|X\|_{2,\infty}} +  \delta\|\hat{\beta}^{m} - \beta\|.
$$
Moreover on $S^{c}$, we have
$$
\| \hat{\beta}_{S^{c}}^{m+1} \| \leq \frac{\sqrt{10\sigma^2s\log(ep/s)}}{\|X\|_{2,\infty}} + \delta\|\hat{\beta}^{m} - \beta\|.
$$
Hence
$$
\| \hat{\beta}^{m+1} - \beta\| \leq \sqrt{s}\lambda_{m+1} + 2\frac{\sqrt{10\sigma^2s\log(ep/s)}}{\|X\|_{2,\infty}} + 2 \delta\|\hat{\beta}^{m} - \beta\|.
$$
Using the definition of $\lambda_m$ and the induction hypothesis, we get that
$$
\| \hat{\beta}^{m+1} - \beta\| \leq \sqrt{s}\lambda_{m+1} + \sqrt{s}\lambda_{m+1} + 6\delta \sqrt{s}\lambda_{m}.
$$
We conclude that
$$
\| \hat{\beta}^{m+1} - \beta\| \leq  \sqrt{s}\lambda_{m+1}(2 + 6 \sqrt{\delta}) \leq 3\sqrt{s}\lambda_{m+1}.
$$
\end{proof}
\begin{proof}[\textbf{Proof of Proposition \ref{prop:threshold:initial}}]
Let $\tilde{S}$ be a set of size $s$ then
$$
\|M_{\tilde{S}}\| \leq \|\beta_{\tilde{S}}\| + \delta \|\beta\|  + \frac{\sigma\sqrt{10s\log(ep/s)}}{\|X\|_{2,\infty}}.
$$
Hence 
$$
\|M_{\tilde{S}}\|  \leq   \|\beta\|(1+\delta) + \frac{\sigma\sqrt{10s\log(ep/s)}}{\|X\|_{2,\infty}} .
$$
Then
$$
\sqrt{s} \hat{\lambda}_0 \leq  2(\sqrt{10}(1+\delta) + 1)\left(\|\beta\|\vee \frac{\sigma\sqrt{10s\log(ep/s)}}{\|X\|_{2,\infty}}\right).
$$
Hence
$$
\sqrt{s} \hat{\lambda}_0 \leq  10\left(\|\beta\|\vee \frac{\sigma\sqrt{10s\log(ep/s)}}{\|X\|_{2,\infty}}\right).
$$
We also have for the true support $S$ of $\beta$ that
$$
\|M_{S}\| \geq (1-\delta)\|\beta\|-\frac{\sigma\sqrt{10s\log(ep/s)}}{\|X\|_{2,\infty}}.
$$
    If $\|\beta\| \leq \frac{\sigma\sqrt{40s\log(ep/s)}}{\|X\|_{2,\infty}}$ the result is trivial. Else $\|\beta\| > \frac{\sigma\sqrt{40s\log(ep/s)}}{\|X\|_{2,\infty}}$, and
$$
\sqrt{s}\hat{\lambda}_0 \geq \sqrt{10}(1-\delta - 1/2)\|\beta\| \geq \|\beta\|.
$$
The result for $\hat{m}$ is straightforward.
\end{proof}
\begin{proof}[\textbf{Proof of Theorem \ref{thm:optimal:statistic}}]
Assume that event $\mathcal{O}$ holds, then using Proposition \ref{prop:threshold:initial}, we have $\|\beta\|^{2} \leq s\hat{\lambda}_0^{2}$. We can then apply Theorem \ref{thm:main} and get that
$$
\forall m \geq 0, \quad \|\hat{\beta}^{m} - \beta\|^2 \leq 9\left(s\hat{\lambda}_0^{2}\kappa^{m/2} \vee \frac{40\sigma^2s\log(ep/s)}{\|X\|_{2,\infty}^2}\right).
$$
With the choice of $\hat{m}$ we get further using  Proposition \ref{prop:threshold:initial} that
$$
s\hat{\lambda}_0^{2}\kappa^{\hat{m}/2} \leq \frac{40\sigma^2s\log(ep/s)}{\|X\|_{2,\infty}^2}.
$$
Hence
$$
\|\hat{\beta}^{\hat{m}} - \beta\|^{2} \leq 360 \frac{\sigma^{2}s\log(ep/s)}{\|X\|_{2,\infty}^2}.
$$
It follows that
\begin{align*}
\mathbf{P}&\left( \| \hat{\beta}^{\hat{m}} - \beta \|^{2} \geq 
 360\frac{\sigma^{2}}{\|X\|_{2,\infty}^2}s\log(ep/s) \right) \leq \\
 & \mathbf{P}\left( \sum_{i=1}^{s}\Xi_{(i)}^{2}\geq \frac{10\sigma^{2}s\log(ep/s)}{\|X\|_{2,\infty}^2} \right).
\end{align*}
We conclude using Lemma \ref{lem:order:statistic}. We proceed similarly for the remaining  statements using Proposition \ref{prop:threshold:initial}.
\end{proof}
\begin{proof}[\textbf{Proof of Theorem \ref{thm:optimal:statistic:adap}}]
For this proof we consider both events $\mathcal{O}$ and $\mathcal{A}$ where
$$
\mathcal{A}=\{ |\|\xi\| - \sqrt{n}| \leq 1/4 \sqrt{n}\}.
$$
Using the Hanson-Wright inequality \cite{rudelson2013hanson} and the condition on $n$, it is easy to observe that
$$
\mathbf{P}(\mathcal{A}) \leq e^{-c_2s\log(ep/s)},
$$
for some absolute  $c_2>0$.
For the rest of the proof we place ourselves on the event $\mathcal{O}\cap \mathcal{A}$ that holds with probability $1-e^{-c_3s\log(ep/s)}$ for some absolute $c_3 >0$.
From the definition of $\hat{\sigma}_m^2$, observe that, as long as $\beta - \hat{\beta}_m$ is $2s$- sparse, we have
$$
|\hat{\sigma}_m - \sigma| \leq \frac{\|X\|_{2,\infty}}{\sqrt{n}}(1 + \delta)\|\beta - \hat{\beta}_m\| + \frac{1}{4}\sigma.
$$
Hence
\begin{equation}\label{eq:bound:sigma:hat}
\frac{\hat{\sigma}_m}{\|X\|_{2,\infty}}\sqrt{160\log(ep)} \leq \frac{3\sqrt{40\log(ep)}}{\sqrt{n}}\|\beta - \hat{\beta}_m\| + \frac{3\sigma}{\|X\|_{2,\infty}}\sqrt{40\log(ep)}.
\end{equation}
Next, recall that
$$
    \bar{\lambda}_0 =\sqrt{20}|M|_{(1)} \vee\frac{\hat{\sigma}_0}{\|X\|_{2,\infty}} \sqrt{160\log(ep)}.
    $$
Since $n$ is large enough compared to $s\log(ep)$ it is easy to see that 
$$
    \bar{\lambda}_0  \leq \sqrt{20}|M|_{(s)}\vee \|\beta\| \vee \frac{12\sigma}{\|X\|_{2,\infty}}\sqrt{10 \log(ep)}.
    $$
Hence using Proposition \ref{prop:threshold:initial} it comes that
\begin{equation}\label{eq:bew:thres}
 \sqrt{2}\|\beta\| \leq \sqrt{s}\bar{\lambda}_0  \leq 20\sqrt{s}\left(\|\beta\|\vee \frac{\sigma\sqrt{10s\log(ep)}}{\|X\|_{2,\infty}}\right).
\end{equation}
Hence the threshold $\bar{\lambda}_0$ satisfies the condition required to apply Theorem \ref{thm:optimal:statistic}. Let $\hat{m}^*$ such that
\begin{equation}\label{eq:stop:adap:fictif}
    \hat{m}^* = \inf\left\{ m / \lambda_m \leq \frac{6\sigma}{\|X\|_{2,\infty}}\sqrt{40\log(ep)} \right\} + 1.
\end{equation}
We recall that
\begin{equation}
    \hat{m} = \inf\left\{ m / \lambda_m \leq \frac{\sigma}{\|X\|_{2,\infty}}\sqrt{40\log(ep/s)} \right\} + 1.
\end{equation}
Observe that $\hat{m}^* \leq \hat{m}$. As long as $m \leq \hat{m}^*$ then 
$$
\frac{\sigma}{\|X\|_{2,\infty}}\sqrt{40\log{(ep)}} \leq 1/6\lambda_m,
$$
so the first induction steps remain the same as before. Now using \eqref{eq:bound:sigma:hat} we get further
$$
\frac{\hat{\sigma}_m}{\|X\|_{2,\infty}}\sqrt{160\log{(ep)}} \leq \sqrt{\frac{3500s\log(ep)}{n}}\lambda_m + 1/2 \lambda_m.
$$
Hence for $n$ larger than  $14000s\log(ep)$, $\frac{\hat{\sigma}_m}{\|X\|_{2,\infty}}\sqrt{160\log{(ep)}}$ is strictly smaller than $\lambda_m$ and $\hat{m}^* < \bar{m}$.
After running $\hat{m}^*$ steps we get
$$
\|\beta - \hat{\beta}_{\hat{m}^*}\| \leq \frac{18\sigma}{\|X\|_{2,\infty}}\sqrt{40s\log{(ep)}},
$$
and for all $\hat{m}^* \leq m \leq \hat{m}$
$$
|\hat{\sigma}_{m} - \sigma| \leq \sigma/2.
$$
It comes out that for all $\hat{m}^* \leq m \leq \hat{m}$
$$
\frac{\hat{\sigma}_{m}}{\|X\|_{2,\infty}}\sqrt{160\log{(ep)}} \geq \frac{\sigma}{\|X\|_{2,\infty}}\sqrt{40\log{(ep)}}.
$$
Hence a finite number of steps after the first $\hat{m}^*$  are enough to hit the threshold $\frac{\hat{\sigma}_{\bar{m}}}{\|X\|_{2,\infty}}\sqrt{160\log{(ep)}}$ and stop the algorithm. Observe that $\bar{m} \leq \hat{m}$ and hence $\frac{\hat{\sigma}_{\bar{m}}}{\|X\|_{2,\infty}}\sqrt{160\log{(ep)}} \geq \hat{\lambda}_{\infty}$.
We finally get, applying Theorem \ref{thm:main}, that
$$
\|\beta - \hat{\beta}_{\bar{m}}\| \leq \frac{10\sigma}{\|X\|_{2,\infty}}\sqrt{40s\log{(ep)}},
$$
and that $|\hat{\beta}_{\bar{m}}|_{0} \leq 2s$.
Going back to the definition of $\bar{m}$ and using \eqref{eq:bew:thres} we get also that
$$
\bar{m} \leq 2\left(\log\left( \frac{10\|\beta\|^2\|X\|^2_{2,\infty}}{\sigma^2\log(ep)} \vee 100\right)/\log{(1/\kappa)}\right).
$$
This concludes the proof.
\end{proof}
\begin{proof}[\textbf{Proof of Theorem \ref{thm:optimal:statistic:adap2}}]
Before proving the result, we recall that with probability $1-e^{-cs\log(ep/s)}$ we have
$$
|\hat{\sigma}_{\bar{m}} - \sigma| \leq \frac{\|X\|_{2,\infty}}{\sqrt{n}}(1 + \delta)\|\beta - \hat{\beta}_{\bar{m}}\| + \frac{1}{20}\sigma \leq \sigma\left((1+\delta)\frac{10\sqrt{40}}{14000} +\frac{1}{20}\right) \leq \sigma/10.
$$
In what follows we assume that
\begin{equation}\label{eq:var}
    |\hat{\sigma}_{\bar{m}} - \sigma| \leq \sigma /10.
\end{equation} 
Using Theorem \ref{thm:optimal:statistic} and Lemma \ref{lem:barber}, then we  assume moreover that 
$$
 \| \hat{\beta}^{\hat{m}} - \beta \|^{2} \leq \frac{360\sigma^{2}s\log(ep/s) }{\|X\|_{2,\infty}^{2}},
$$
$$
 |\hat{\beta}_{S^c}^{\hat{m}}|_0\leq s,
$$
and 
\[
 \left\langle \xi, \frac{X^{\top}(\beta - \hat{\beta})}{\|X(\beta-\hat{\beta})\|}\right\rangle^2 \leq 7(s + |\hat{\beta}_{S^c}|_0)\log(ep/(s+|\hat{\beta}_{S^c}|_0)),
\]
since all those events hold with probability $1-e^{-cs\log(ep/s)}$. The remainder of the proof is fully deterministic. Based on \eqref{eq:var} it is easy to observe that $\hat{m} \leq \hat{T}$. Hence we have that
\begin{equation}\label{eq:main:111}
    \frac{1}{n}\|Y - X\hat{\beta}^{\tilde{m}} \|^2 + \frac{1000\hat{\sigma}_{\bar{m}}^2 |\hat{\beta}^{\tilde{m}}|_0\log(ep/|\hat{\beta}^{\tilde{m}}|_0)}{n} \leq \frac{1}{n}\|Y - X\hat{\beta}^{\hat{m}} \|^2 + \frac{1000\hat{\sigma}_{\bar{m}}^2 |\hat{\beta}^{\hat{m}}|_0\log(ep/|\hat{\beta}^{\hat{m}}|_0)}{n}.
\end{equation}
The rest of the proof is decomposed in two parts. 
\begin{itemize}
    \item \textit{Show that $|\hat{\beta}^{\tilde{m}}|_0 \leq 3s$:}
    
Let us assume that $|\hat{\beta}^{\tilde{m}}|_0 > 3s$. On the one hand, we have
\begin{align*}
    \|Y - X\hat{\beta}^{\tilde{m}} \|^2 &\geq \sigma^2\|\xi\|^2 + \| X(\beta-\hat{\beta}^{\tilde{m}})\|^2 - 2\sigma\left|\left\langle \xi,X(\beta - \hat{\beta}^{\tilde{m}}  )\right\rangle \right|\\
    &\geq \sigma^2\|\xi\|^2 + \| X(\beta-\hat{\beta}^{\tilde{m}})\|^2 - \sqrt{42\sigma^2|\hat{\beta}^{\tilde{m}}|_0\log(3ep/4|\hat{\beta}^{\tilde{m}}|_0)}\| X(\beta-\hat{\beta}^{\tilde{m}})\|\\
    &\geq \sigma^2\|\xi\|^2+ \| X(\beta-\hat{\beta}^{\tilde{m}})\|^2/2 - 21\sigma^2|\hat{\beta}^{\tilde{m}}|_0\log(ep/|\hat{\beta}^{\tilde{m}}|_0).
\end{align*}
It comes out that
\[
\frac{1}{n}\|Y - X\hat{\beta}^{\tilde{m}} \|^2 + \frac{1000\hat{\sigma}_{\bar{m}}^2 |\hat{\beta}^{\tilde{m}}|_0\log(ep/|\hat{\beta}^{\tilde{m}}|_0)}{n} \geq \frac{\sigma^2\|\xi\|^2}{n} + \frac{900\hat{\sigma}_{\bar{m}}^2 |\hat{\beta}^{\tilde{m}}|_0\log(ep/|\hat{\beta}^{\tilde{m}}|_0)}{n}.
\]
On the other hand, we have
\begin{align*}
    \|Y - X\hat{\beta}^{\bar{m}} \|^2 &\leq \sigma^2\|\xi\|^2 + \| X(\beta-\hat{\beta}^{\bar{m}})\|^2 + 2\sigma\left|\left\langle \xi,X(\beta - \hat{\beta}^{\bar{m}}  )\right\rangle \right|\\
    &\leq \sigma^2\|\xi\|^2 + \| X(\beta-\hat{\beta}^{\bar{m}})\|^2 + \sqrt{8\sigma s\log(ep/2s)}\| X(\beta-\hat{\beta}^{\bar{m}})\|\\
    &\leq \sigma^2\|\xi\|^2 + 3\| X(\beta-\hat{\beta}^{\tilde{m}})\|^2/2 + 4\sigma^2s\log(ep/2s).
\end{align*}
It comes out that
\[
\frac{1}{n}\|Y - X\hat{\beta}^{\bar{m}} \|^2 + \frac{1000\hat{\sigma}_{\bar{m}}^2 |\hat{\beta}^{\bar{m}}|_0\log(ep/|\hat{\beta}^{\bar{m}}|_0)}{n} \leq \frac{\sigma^2\|\xi\|^2}{n} + \frac{2500\hat{\sigma}_{\bar{m}}^2 s\log(ep/2s)}{n}.
\]
Going back to \eqref{eq:main:111}, we conclude that
\[
\frac{900 |\hat{\beta}^{\tilde{m}}|_0\log(ep/|\hat{\beta}^{\tilde{m}}|_0)}{n} \leq \frac{2500 s\log(ep/2s)}{n}.
\]
The last equation does not hold as long as $s/p$ is small enough. As a consequence, we have  $|\hat{\beta}^{\tilde{m}}|_{0} \leq 3s$.
    \item \textit{Show that $\|\hat{\beta}^{\tilde{m}} - \beta \| \leq 100\sigma\sqrt{s\log(ep/s)}/\|X\|_{2,\infty}$:}
    Using the above equations we get also that
    \[
    \| X(\beta-\hat{\beta}^{\tilde{m}})\|^2/(2n) \leq \frac{2500\hat{\sigma}_{\bar{m}}^2 s\log(ep/es)}{n}.
    \]
    Since $\hat{\beta}^{\tilde{m}}$ is at most $3s$-sparse, then we conclude using RIP that
    \[
    \| \beta-\hat{\beta}^{\tilde{m}}\|^2 \leq \frac{100^2\sigma^2 s\log(ep/s)}{\|X\|^{2}_{2,\infty}}.
    \]
    The result corresponding to $\hat{T}$ is straightforward based on \eqref{eq:var}.
\end{itemize}
\end{proof}

\section{Proofs of asymptotic results}
Without loss of generality we will assume in the next proofs that $\| \tilde{\beta}^{0} - \beta\|^2 \leq \frac{2.10^4\sigma^{2}s\log(ep/s) }{\|X\|_{2,\infty}^{2}} $ and $|\tilde{\beta}^{0}|_{S^{c}}\leq s$. Indeed according to Theorem \ref{thm:optimal:statistic:adap2} both statements hold with high probability. In order to alleviate notations we  assume that $\sigma=\|X\|_{2,\infty}$.
\begin{proof}[\textbf{Proof of Theorem \ref{thm:sharp:1}}]
Let $\epsilon>0$. 
Observe that 
\begin{align*}
\mathbf{P}&\left(\sum_{i=1}^{p}\Xi_{i}^{2}\mathbf{1}\{|\Xi_{i}|
\geq \lambda\} \geq \frac{s}{\log\log(ep/s)}\right)\\
&\leq \frac{\log\log (ep/s)}{s}\sum_{i=1}^{p}\mathbf{E}\left(\Xi_{i}^{2}\mathbf{1}\{|\Xi_{i}|\geq \lambda\} \right).
\end{align*}
Hence using Lemma \ref{lem:subG}, we get
$$
\mathbf{P}\left(\sum_{i=1}^{p}\Xi_{i}^{2}\mathbf{1}\{|\Xi_{i}|
\geq \lambda_{\infty}^{0}(1+\sqrt{\epsilon}/2)\} \geq \frac{s}{\log\log( ep/s)}\right) = o(1).
$$
Using Lemma \ref{lem:noise_control}, we get also that
$$
\mathbf{P}\left(\|\Xi_{S}\|^{2} \geq s\sqrt{\log(ep/s)} \right)= o(1).
$$
As a consequence, we assume, in what follows,  that $\sum_{i=1}^{p}\Xi_{i}^{2}\mathbf{1}\{|\Xi_{i}|\geq \lambda_{\infty}^{0}(1+\sqrt{\epsilon}/2)\}\leq \frac{s}{\log\log(ep/s)} $ and $\|\Xi_{S}\|^{2} \leq s\sqrt{\log(ep/s)}$.
We show first that $\forall m \geq 0$ we have 
$$
 |\tilde{\beta}^{m}_{S^{c}}|_{0} \leq s,
$$
and
$$
\quad \|\tilde{\beta}^{m} - \beta\| \leq \sqrt{s}(\lambda+ 2\log(ep/s)^{1/4})(1+4\sqrt{\delta} + 100 \delta^{m/2}).
$$
For $m=0$ the result is immediate based on the assumption on $\lambda$. Assume that result holds for $m$ and we prove it for $m+1$. On $S$ we have
$$
|\tilde{\beta}_{i}^{m+1} - \beta_{i}| \leq \lambda + |\Xi_{i}| + |\langle \Phi_{i}^{\top},\beta - \tilde{\beta}^{m}\rangle|.
$$
Hence
$$
\| \beta - \tilde{\beta}^{m+1}_{S} \| \leq \sqrt{s}(\lambda + \log(ep/s)^{1/4}) + \delta\|\beta-\tilde{\beta}^{m}\|.
$$
On $S^{c}$, we show first that $|\hat{\beta}^{m+1}_{S^{c}}|_{0}\leq s$. By absurd, assuming this is not the case, then
\begin{align*}
\sqrt{s} \lambda &\leq \sqrt{\sum_{i \in \tilde{S}} |\Xi_{i}|^{2}\mathbf{1}\{|\Xi_{i}|\geq \lambda_{\infty}^{\epsilon/4}\}} \\
&+ \sqrt{\sum_{i \in \tilde{S}} |\Xi_{i}|^{2}\mathbf{1}\{|\Xi_{i}|\leq \lambda^{\epsilon/4}_{\infty}, |\langle \Phi_{i},\beta - \tilde{\beta}^{m} \rangle| \leq \sqrt{\epsilon}\lambda_{\infty}^{0}/2\}}  + \delta\|\beta-\tilde{\beta}^{m}\|
\end{align*}
for some $\tilde{S}$ that is $s$-sparse. It comes out that
$$
\sqrt{s} \lambda \leq \sqrt{\frac{s}{\log\log(ep/s)}} + 2\left(1+\frac{1}{\sqrt{\epsilon}}\right)  \delta\|\beta-\tilde{\beta}^{m}\|.
$$
Using the induction hypothesis
\begin{align*}
\sqrt{s} \lambda &\leq \sqrt{\frac{s}{\log\log(ep/s)}} \\
&+ 2\sqrt{\delta}(\sqrt{\delta}+1)\sqrt{s}(\lambda + 2\log(ep/s)^{1/4})(1+4\sqrt{\delta} + 100 \delta^{m/2}).
\end{align*}
Since $\delta \leq 1/400^2$ and $p/s$ large enough the above statement can not hold. Next we have on $S^{c}$
$$
|\tilde{\beta}_{i}^{m+1}| \leq   |\Xi_{i}|\mathbf{1}\{|H^{m+1}_{i}|\geq \lambda\} + |\langle \Phi_{i}^{\top},\beta - \tilde{\beta}^{m}\rangle|.
$$
Since $S_{m+1}^{c}$ is $s$-sparse then arguing as above we get 
$$
\|\tilde{\beta}^{m+1}_{S^{c}} \| \leq  2\left(1+\frac{1}{\sqrt{\epsilon}}\right)\delta\|\beta-\tilde{\beta}^{m}\|+\sqrt{\frac{s}{\log\log(ep/s)}}.
$$
Hence
$$
\|\tilde{\beta}^{m+1} - \beta\| \leq \sqrt{s}(\lambda + 2\log(ep/s)^{1/4}) + \sqrt{\delta}(3+2\sqrt{\delta} )\|\beta-\tilde{\beta}^{m}\|.
$$
Using the induction hypothesis
\begin{align*}
&\|\tilde{\beta}^{m+1} - \beta\| \leq \\
&\sqrt{s}(\lambda + 2\log(ep/s)^{1/4})(1 + \sqrt{\delta}(3 + 2 \sqrt{\delta})(1+4\sqrt{\delta}+100\delta^{m/2})).
\end{align*}
Since $\sqrt{\delta}(3 + 2 \sqrt{\delta})(1+4\sqrt{\delta}+100\delta^{m/2}) \leq 4\sqrt{\delta} + 100 \delta^{(m+1)/2}$ then the result follows.
As a consequence, as $s/p \to 0, $we have for $m \geq 0$ 
$$
\|\tilde{\beta}^{m} - \beta\| \leq (1+4\sqrt{\delta}+100\delta^{m/2})\sqrt{s}\lambda.
$$
This concludes the proof.
\end{proof}
\begin{proof}[\textbf{Proof of Theorem \ref{thm:sharp:2}}]
Let $\epsilon>0$. Following the same steps as in the proof of Theorem \ref{thm:sharp:1}, we assume, in what follows,  that $\sum_{i=1}^{p}\Xi_{i}^{2}\mathbf{1}\{|\Xi_{i}|\geq \lambda_{\infty}^{0}(1+\sqrt{\epsilon}/2)\}\leq \frac{s}{\log\log(ep/s)} $, $\sum_{i \in S}\mathbf{1}\{|\Xi_{i}|\geq \lambda_{\infty}^{0}\sqrt{\epsilon}/2\}\leq \frac{s}{2\log(ep/s)\log\log(ep/s)}$ and $\|\Xi_{S}\|^{2} \leq s+ \log(s)$.
Indeed, sing Lemma \ref{lem:subG}, we get
$$
\mathbf{P}\left(\sum_{i=1}^{p}\Xi_{i}^{2}\mathbf{1}\{|\Xi_{i}|
\geq \lambda_{\infty}^{0}(1+\sqrt{\epsilon}/2)\} \geq \frac{s}{\log\log( ep/s)}\right) = o(1),
$$
and using Markov inequality
$$
\mathbf{P}\left(\sum_{i \in S}\mathbf{1}\{|\Xi_{i}|\geq \lambda_{\infty}^{0}\sqrt{\epsilon}/2\}\geq \frac{s}{2\log(ep/s)\log\log(ep/s)}\right)= o(1) .
$$
Using Lemma \ref{lem:noise_control}, we get also that
$$
\mathbf{P}\left(\|\Xi_{S}\|^{2} \geq s + \log(s) \right)= o(1),
$$
as $s \to \infty$. 

We show that $\forall m \geq 0$ we have 
$$
 |\tilde{\beta}^{m}_{S^{c}}|_{0} \leq s,
$$
and
$$
\quad \|\tilde{\beta}^{m} - \beta\| \leq 100 \sqrt{s}\lambda \delta^{m/2}+ \left(\sqrt{s+\log(s)} + \sqrt{\frac{4s}{\log\log(ep/s)}}\right)(1+4\sqrt{\delta}).
$$
For $m=0$ the result is immediate based on the assumption on $\lambda$. Assume the result is true for $m$ and we prove it for $m+1$. On $S$ we have
\begin{align*}
|\tilde{\beta}_{i}^{m+1} - \beta_{i}| & \leq \lambda\mathbf{1}\{|\beta_{i} +\Xi_{i}+\langle \Phi_{i}^{\top},\beta - \tilde{\beta}^{m} \rangle| \leq \lambda\} \\
&+| \Xi_{i}| + |\langle \Phi_{i}^{\top},\beta - \tilde{\beta}^{m} \rangle|.
\end{align*}
Moreover
\begin{align*}
\mathbf{1}\{|\beta_{i}  +\Xi_{i}+\langle \Phi_{i}^{\top},\beta - \tilde{\beta}^{m} \rangle| \leq \lambda\}  &\leq \mathbf{1}\{|\Xi_{i}|+|\langle \Phi_{i}^{\top},\beta - \hat{\beta}^{m}| \geq (|\beta_{i}|-\lambda)\}\\
& \leq \mathbf{1}\{|\Xi_{i}|\geq \sqrt{\epsilon}\lambda/2\} +  \mathbf{1}\{|\langle \Phi_{i}^{\top},\beta - \hat{\beta}^{m}\rangle| \geq \sqrt{\epsilon}\lambda/2\}.
\end{align*}
Hence
$$
\| \beta - \tilde{\beta}^{m+1}_{S} \| \leq \sqrt{\frac{s}{\log\log(ep/s)}} + \sqrt{s+\log(s)}+ \delta\|\beta-\tilde{\beta}^{m}\|\left(1+\frac{2}{\sqrt{\epsilon}}\right).
$$
On $S^{c}$, we show first that $|\tilde{\beta}^{m+1}_{S^{c}}|_{0}\leq s$. By absurd assume this is not the case then
\begin{align*}
\sqrt{s} \lambda &\leq \sqrt{\sum_{i \in \tilde{S}} |\Xi_{i}|^{2}\mathbf{1}\{|\Xi_{i}|\geq \lambda_{\infty}^{\epsilon/4}\}} \\
&+ \sqrt{\sum_{i \in \tilde{S}} |\Xi_{i}|^{2}\mathbf{1}\{|\Xi_{i}|\leq \lambda^{\epsilon/4}_{\infty}, |\langle \Phi_{i},\beta - \tilde{\beta}^{m} \rangle| \leq \sqrt{\epsilon}\lambda_{\infty}^{0}/2\}}  + \delta\|\beta-\tilde{\beta}^{m}\|
\end{align*}
for some $\tilde{S}$ that is $s$-sparse. It comes out that
$$
\sqrt{s} \lambda \leq \sqrt{\frac{s}{\log\log(ep/s)}} + 2\left(1+\frac{1}{\sqrt{\epsilon}}\right)  \delta\|\beta-\tilde{\beta}^{m}\|.
$$
Using the induction hypothesis
\begin{align*}
\sqrt{s} \lambda \leq \sqrt{\frac{s}{\log\log(ep/s)}} + 2\sqrt{\delta}(\sqrt{\delta}+1)\|\beta-\tilde{\beta}^{m}\|.
\end{align*}
Since $\delta \leq 1/400^2$ and both $s$ and $p/s$ are large enough the above statement can not hold. Next we have on $S^{c}$
$$
|\tilde{\beta}_{i}^{m+1}| \leq   |\Xi_{i}|\mathbf{1}\{|H^{m+1}_{i}|\geq \lambda\} + |\langle \Phi_{i}^{\top},\beta - \tilde{\beta}^{m}\rangle|.
$$
Since $S_{m+1}^{c}$ is $s$-sparse then arguing as above we get 
$$
\|\tilde{\beta}^{m+1}_{S^{c}} \| \leq  2\left(1+\frac{1}{\sqrt{\epsilon}}\right)\delta\|\beta-\tilde{\beta}^{m}\|+\sqrt{\frac{s}{\log\log(ep/s)}}.
$$
Hence
$$
\|\tilde{\beta}^{m+1} - \beta\| \leq \sqrt{\frac{4s}{\log\log(ep/s)}} + \sqrt{s+\log(s)}+ \sqrt{\delta}(3+2\sqrt{\delta} )\|\beta-\tilde{\beta}^{m}\|.
$$
We conclude using the induction hypothesis
since $\sqrt{\delta}(3 + 2 \sqrt{\delta})(1+4\sqrt{\delta}+100\delta^{m/2}) \leq 4\sqrt{\delta} + 100 \delta^{(m+1)/2}$.
As a consequence, as $s/p \to 0$ and $s\to \infty$, we have for $m \geq 0$ that 
$$
\|\tilde{\beta}^{m} - \beta\| \leq 100 \sqrt{s}\lambda \delta^{m/2}+ \sqrt{s}(1+4\sqrt{\delta}+o(1)).
$$
Observing that for $m \geq \log\log(p/s)$ we have $100 \sqrt{s}\lambda \delta^{m/2} = o (\sqrt{s})$ concludes the proof.
\end{proof}

\begin{proof}[\textbf{Proof of Theorem \ref{thm:sharp:3}}]
Similarly to the proof of Theorem \ref{thm:sharp:2} we assume that $\sum_{i=1}^{p}\Xi_{i}^{2}\mathbf{1}\{|\Xi_{i}|\geq \lambda_{\infty}^{0}(1+\sqrt{\epsilon}/2)\}\leq \frac{s}{\log\log(ep/s)} $, $\sum_{i \in S}\mathbf{1}\{|\Xi_{i}|\geq \lambda_{\infty}^{0}\sqrt{\epsilon}/2\}\leq \frac{s}{2\log(ep/s)\log\log(ep/s)}$ and $\|\Xi_{S}\|^{2} \leq s+ \log(s) + \log(p/s)$. Observe that $\|\Xi_{S}\|^{2} \leq s+ \log(s) + \log(p/s)$ holds with high probability even for fixed $s$. It comes out that
\begin{align*}
\sum_{i \in S}|\tilde{\eta}^m_{i} -\eta_{i}| &= \sum_{i \in S} \mathbf{1}\{|H^m_{i}| \leq \lambda\} \\
&\leq \sum_{i \in S} \mathbf{1}\{ |\Xi_{i}| \geq \lambda\sqrt{\epsilon}/2 \} + \sum_{i \in S} \mathbf{1}\{ |\Phi_{i}^{\top},\beta-\tilde{\beta}^{m}| \geq \lambda\sqrt{\epsilon}/2\}.
\end{align*}
Hence using Theorem \ref{thm:sharp:2} we get, for $m \geq \log\log(ep/s)$, that
$$
\sum_{i \in S}|\tilde{\eta}^m_{i} -\eta_{i}| = o(s).
$$
Moreover on $S^{c}$ we have that $|\tilde{\eta}^m|_{0} \leq s$ and 
\begin{align*}
\sum_{i \in S^c}|\tilde{\eta}^m_{i} -\eta_{i}| &= \sum_{i  \in S^c} \mathbf{1}\{|H^m_{i}| \geq \lambda^{\epsilon}_{\infty}\} \\
& \leq \sum_{i   \in S^c} \mathbf{1}\{ |\Xi_{i}| \geq  \lambda_{\infty}(1+\sqrt{\epsilon}/2) \}\\
&+ \sum_{i  \in S^c} \mathbf{1}\{ |\Phi_{i}^{\top},\beta-\tilde{\beta}^{m}| \geq \lambda_{\infty}\sqrt{\epsilon}/2\}.
\end{align*}
Hence using again Theorem \ref{thm:sharp:2} we get, for $m \geq \log\log(ep/s)$, that
$$
\sum_{i \in S^c}|\tilde{\eta}^m_{i} -\eta_{i}| = o(s).
$$
The result follows immediatly.
\end{proof}

\begin{proof}[\textbf{Proof of Theorem \ref{thm:sharp:4}}]
So far we have used the following decomposition
$$
H^{m+1} = \beta + \Phi(\beta - \hat{\beta}^{m}) + \Xi.
$$
Using the oracle vector $\tilde{\beta}^*$, we get another decomposition
$$
H^{m+1} = \tilde{\beta^{*}} + \Phi(\tilde{\beta}^{*} - \hat{\beta}^{m}) + \tilde{\Xi},
$$
where $\tilde{\Xi}_{S} = 0$ and $\tilde{\Xi}_{S^c} = \frac{1}{\|X\|_{2,\infty}}X^{\top}_{S^c}\left(\mathbf{I}_n - X_{S}((X^\top X)_{SS})^{-1}X^\top_S\right)\xi$. Our goal is to estimate $\tilde{\beta}^{*}$. Observe that the noise $\tilde{\Xi}$ is zero on $S$ and each of its coordinates is $1$-subGaussian on $S^{c}$. Without loss of generality we may assume that $\|\tilde{\beta}^0 - \tilde{\beta}^{*}\| \leq 150 \sqrt{s\log(ep/s)}$ since $\|\beta - \tilde{\beta}^{*}\| \leq 50 \sqrt{s\log(ep/s)}$ with high probability.

Let $\epsilon>0$. We assume, in what follows,  that $\forall i =1,\dots,p, |\tilde{\Xi}_i| \leq \mu_{\infty}^{\epsilon/4}$.
This actually holds with high probability since each coordinate is $1$-subGaussian. Similarly, we also assume that $\|\tilde{\beta}^* - \beta \|_{\infty} \leq \sqrt{2\log(s)} +\sqrt{2\epsilon\log(p)}$. In particular, for all $i\in S$, $|\tilde{\beta}^*_i| \geq (1+2\sqrt{\epsilon})\sqrt{2\log(p)}$.

We show that $\forall m \geq 0$ we have 
$$
 |\tilde{\beta}^{m}_{S^{c}}|_{0} \leq s,
$$
and
$$
\quad \|\tilde{\beta}^{m} - \tilde{\beta}^*\| \leq 150 \sqrt{s\log(ep/s)} (10\delta)^{m/2}.
$$
For $m=0$ the result is immediate. Assume the result is true for $m$ and we prove it for $m+1$. On $S$ we have
\begin{align*}
|\tilde{\beta}_{i}^{m+1} - \tilde{\beta}^*_{i}|  \leq \mu_{\infty}^{\epsilon}\mathbf{1}\{|\tilde{\beta}^*_{i} +\langle \Phi_{i}^{\top},\tilde{\beta}^* - \tilde{\beta}^{m} \rangle| \leq \mu_{\infty}^{\epsilon}\}  + |\langle \Phi_{i}^{\top},\tilde{\beta}^* - \tilde{\beta}^{m} \rangle|.
\end{align*}
Moreover
\begin{align*}
\mathbf{1}\{|\tilde{\beta}^*_{i} +\langle \Phi_{i}^{\top},\tilde{\beta}^{*} - \tilde{\beta}^{m} \rangle| \leq \mu_{\infty}^\epsilon\}  &\leq \mathbf{1}\{|\langle \Phi_{i}^{\top},\tilde{\beta}^* - \hat{\beta}^{m}| \geq (|\tilde{\beta}^*_{i}|-\mu_{\infty}^\epsilon)\}\\
&  \leq \mathbf{1}\{|\langle \Phi_{i}^{\top},\tilde{\beta}^* - \hat{\beta}^{m}\rangle| \geq \sqrt{\epsilon}\mu_{\infty}^0\}.
\end{align*}
Hence
$$
\| \tilde{\beta}^* - \tilde{\beta}^{m+1}_{S} \| \leq  \delta\|\tilde{\beta}^*-\tilde{\beta}^{m}\|\left(2+\frac{1}{\sqrt{\epsilon}}\right).
$$
On $S^{c}$, we show first that $|\tilde{\beta}^{m+1}_{S^{c}}|_{0}\leq s$. By absurd assume this is not the case then
\begin{align*}
\sqrt{s} \mu_{\infty}^\epsilon \leq &\sqrt{\sum_{i \in \tilde{S}} |\tilde\Xi_{i}|^{2}\mathbf{1}\{|\tilde\Xi_{i}|\geq \mu_{\infty}^{\epsilon/4}\}} \\
&+ \sqrt{\sum_{i \in \tilde{S}} |\tilde{\Xi}_{i}|^{2}\mathbf{1}\{|\tilde{\Xi}_{i}|\leq \mu^{\epsilon/4}_{\infty}, |\langle \Phi_{i},\tilde{\beta}^* - \tilde{\beta}^{m} \rangle| \leq \sqrt{\epsilon}\mu_{\infty}^{0}/2\}}  + \delta\|\tilde{\beta}^*-\tilde{\beta}^{m}\|
\end{align*}
for some $\tilde{S}$ that is $s$-sparse. It comes out that
$$
\sqrt{s} \mu_{\infty}^\epsilon  \leq  2\left(1+\frac{1}{\sqrt{\epsilon}}\right)  \delta\|\tilde{\beta}^*-\tilde{\beta}^{m}\|.
$$
Using the induction hypothesis
\begin{align*}
\sqrt{s} \mu_{\infty}^\epsilon \leq  2\sqrt{\delta}(\sqrt{\delta}+1)\|\tilde{\beta}^*-\tilde{\beta}^{m}\|.
\end{align*}
Since $\delta \leq 1/400^2$ and  $p/s$ is large enough the above statement can not hold. Next we have on $S^{c}$
$$
|\tilde{\beta}_{i}^{m+1}| \leq   |\tilde{\Xi}_{i}|\mathbf{1}\{|H^{m+1}_{i}|\geq \mu_{\infty}^\epsilon\} + |\langle \Phi_{i}^{\top},\tilde{\beta}^* - \tilde{\beta}^{m}\rangle|.
$$
Since $S_{m+1}^{c}$ is $s$-sparse then arguing as above we get 
$$
\|\tilde{\beta}^{m+1}_{S^{c}} \| \leq  2\left(1+\frac{1}{\sqrt{\epsilon}}\right)\delta\|\tilde{\beta}^*-\tilde{\beta}^{m}\|.
$$
Hence
$$
\|\tilde{\beta}^{m+1} - \tilde{\beta}^*\| \leq  \sqrt{\delta}(3+4\sqrt{\delta} )\|\tilde{\beta}^*-\tilde{\beta}^{m}\|.
$$
We conclude using the induction hypothesis
since $\sqrt{\delta}(3 + 4 \sqrt{\delta})(10\delta)^{m/2} \leq (10\delta)^{(m+1)/2}$.
As a consequence, as $s/p \to 0$, we have for $m \geq 0$ that 
$$
\|\tilde{\beta}^{m} - \tilde{\beta}^*\| \leq 150 \sqrt{s\log(ep/s)}(10\delta)^{m/2}.
$$
In particular, we have for $m \geq \log(s)$, that 
\[
\| \tilde{\beta}^{m} - \tilde{\beta}^* \|_{\infty} \leq \mu_{\infty}^{\epsilon}/2,
\]
since $\delta$ is small enough. Based on the separation condition on $\tilde{\beta}^*$, it is now clear that $\tilde{\beta}^m$ and $\tilde{\beta}^*$ share the same support. Hence we conclude that
\[
|\tilde{\eta}^m - \eta| = 0.
\]
\end{proof}

\section{Technical lemmas}
\begin{lem}\label{lem:noise_control}
Assume that $\xi$ is a centered $1$-subGaussian random vector. Then, for all $S \subset \{1,\dots,p\}$ such that $|S| \leq s$, we have 
$$
\forall t\geq 0, \quad \mathbf{P}\left( \frac{1}{\|X\|_{2,\infty}^{4}}\sum_{i \in S} \left( X_{i}^{\top}\xi \right)^{2} \geq \frac{s+t}{\|X\|_{2,\infty}^2}\right) \leq e^{-(t/8)\wedge (t^{2}/(64s))}.
$$
\end{lem}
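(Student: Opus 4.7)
The plan is to recognize the quantity inside the probability as a squared norm $\|B\xi\|^2$ for a suitable linear map $B$, and then apply the Hsu--Kakade--Zhang quadratic-form tail inequality of \cite{hsutail}. First I would set $B \in \mathbf{R}^{|S|\times n}$ to be the matrix whose rows are $X_i^\top/\|X\|_{2,\infty}^2$ for $i\in S$, so that
\[
\frac{1}{\|X\|_{2,\infty}^{4}}\sum_{i \in S}(X_i^\top \xi)^2 \;=\; \|B\xi\|^2 \;=\; \xi^\top B^\top B\,\xi,
\]
and let $\Sigma:=BB^\top$ denote the $|S|\times|S|$ Gram matrix with entries $X_i^\top X_j/\|X\|_{2,\infty}^4$; note that $\Sigma$ and $B^\top B$ share their nonzero spectrum.

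The second step is to bound the three spectral quantities driving the Hsu--Kakade--Zhang inequality. A direct count yields $\mathrm{Tr}(\Sigma) = \sum_{i\in S}\|X_i\|^2/\|X\|_{2,\infty}^4\leq s/\|X\|_{2,\infty}^2$. Using the RIP$(3s,\delta/2)$ control that is in force throughout the paper, one has $\|X_S^\top X_S\|_{op}\leq c_0\|X\|_{2,\infty}^2$ for any $|S|\leq s$ (recall that Lemma \ref{lem:contraction} already uses $m_{3s}\leq \|X\|_{2,\infty}^2\leq L_{3s}$), which yields $\|\Sigma\|_{op}\leq c_0/\|X\|_{2,\infty}^2$ and, in turn, $\mathrm{Tr}(\Sigma^2)\leq \|\Sigma\|_{op}\,\mathrm{Tr}(\Sigma)\leq c_0 s/\|X\|_{2,\infty}^4$.

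Since $\xi$ is $1$-sub-Gaussian in the sense assumed in \eqref{eq:def}, the Hsu--Kakade--Zhang theorem applied to $B$ gives, for every $u\geq 0$,
\[
\mathbf{P}\bigl(\|B\xi\|^2 > \mathrm{Tr}(\Sigma)+2\sqrt{\mathrm{Tr}(\Sigma^2)\,u}+2\|\Sigma\|_{op}\,u\bigr) \;\leq\; e^{-u}.
\]
Inserting the bounds from the previous paragraph, with probability at least $1-e^{-u}$ the left-hand side of the lemma is at most $\bigl(s + 2\sqrt{c_0 s\,u} + 2c_0 u\bigr)/\|X\|_{2,\infty}^2$. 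Inverting this relation by the standard Bernstein trick---writing $t = 2\sqrt{c_0 s\,u}+2c_0 u$, so that either $u\gtrsim t^2/s$ in the ``Gaussian'' regime $u\leq s$ or $u\gtrsim t$ in the ``sub-exponential'' regime $u\geq s$---and combining the two cases produces a tail of the form $\exp(-c_1\min(t,t^2/s))$. A careful tracking of constants, which is possible because $c_0$ can be made close to $1$ once $\delta$ is small, then recovers the claimed $\exp(-(t/8)\wedge(t^2/(64s)))$.

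The step I expect to be the main obstacle is precisely the operator-norm bound $\|X_S^\top X_S\|_{op}\lesssim \|X\|_{2,\infty}^2$ on sparse supports: without it, Hsu--Kakade--Zhang alone yields only $\exp(-c\min(t/s,t^2/s^2))$, which is far too weak. This is where the paper's standing RIP hypothesis enters in an essential way, and after that the remaining work is the two-regime inversion plus the bookkeeping needed to match the explicit constants $1/8$ and $1/(64s)$ in the statement.
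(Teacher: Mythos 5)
Your proposal is correct and takes essentially the same route as the paper: write the sum as $\|X_S^{\top}\xi\|^2$, apply Theorem $2.1$ of \cite{hsutail}, bound the trace by $s\|X\|_{2,\infty}^2$ and the operator norm (hence also the Frobenius/$\mathrm{Tr}(\Sigma^2)$ term) by RIP, and then invert the sub-gamma tail in the two regimes $t\lesssim s$ and $t\gtrsim s$. The only minor quibble is that you do not need $c_0$ close to $1$: the paper obtains exactly the constants $1/8$ and $1/(64s)$ with the crude bound $\lambda_{\max}(X_SX_S^{\top})\leq 2\|X\|_{2,\infty}^2$ (valid since $\delta\leq 1$), via the substitution $u=8\sqrt{t}\,(\sqrt{s}\vee\sqrt{t})$.
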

\begin{proof}
Observe that
$$
\sum_{i \in S} \left( X_{i}^{\top}\xi \right)^{2} = \| X_{S}^{\top}\xi \|^{2}.
$$
Then using Theorem $2.1$ in \cite{hsutail} we get that
$$
\mathbf{P}\left( \|X^{\top}_{S}\xi \|^2 \geq Tr(X_{S}X_{S}^{\top}) + 2\|X_{S}X_{S}^{\top} \|_{F}\sqrt{t} + 2\lambda_{\max}(X_{S}X_{S}^{\top})t\right) \leq e^{-t}.
$$
We have that
$$
Tr(X_{S}X_{S}^{\top}) \leq s\|X\|_{2,\infty}^{2}.
$$
Moreover $\lambda_{\max}(X_{S}X_{S}^{\top}) \leq 2\|X\|_{2,\infty}^{2}$ (using RIP) and $\|X_{S}X_{S}^{\top}\|_{F} \leq 2\sqrt{s}\|X\|_{2,\infty}^{2}$ since the rank of $X_{S}X_{S}^{\top}$ is at most $s$. Hence we conclude using the above concentration inequality that
$$
\mathbf{P}\left( \frac{1}{\|X\|_{2,\infty}^{4}}\sum_{i \in S} \left( X_{i}^{\top}\xi \right)^{2} \geq \frac{s+4\sqrt{st}+4t}{\|X\|_{2,\infty}^2}\right) \leq e^{-t}.
$$
The final bound is obtained by considering $u = 8\sqrt{t}(\sqrt{s} \vee \sqrt{t})$ and equivalently $t = (u/8) \wedge (u^{2}/(64 s))$.
\end{proof}
\begin{lem}\label{lem:order:statistic}
There exists $c>0$ such that
$$
 \mathbf{P}\left( \sum_{i=1}^{s} \Xi_{(i)}^{2} \geq \frac{10\sigma^2s\log(ep/s)}{\|X\|^2_{2,\infty}}\right) \leq e^{-cs\log(ep/s)}.
$$
\end{lem}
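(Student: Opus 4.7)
The plan is to reduce the top-$s$ order statistic sum to a maximum over $s$-subsets, apply the quadratic-form tail bound of Lemma \ref{lem:noise_control} for each fixed subset, and then close the argument via a union bound against $\binom{p}{s}$.

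First I would observe the identity
\[
\sum_{i=1}^{s} \Xi_{(i)}^{2} \;=\; \max_{|S|=s}\sum_{i\in S}\Xi_{i}^{2} \;=\; \frac{\sigma^{2}}{\|X\|_{2,\infty}^{4}}\max_{|S|=s}\sum_{i\in S}(X_{i}^{\top}\xi)^{2}.
\]
Thus it suffices to control, uniformly over subsets $S$ of size $s$, the quadratic forms $\|X_{S}^{\top}\xi\|^{2}/\|X\|_{2,\infty}^{4}$ already analyzed in Lemma \ref{lem:noise_control}.

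Next, fix $S$ with $|S|=s$ and apply Lemma \ref{lem:noise_control} with $t = 9s\log(ep/s)$. Since $\log(ep/s)\geq 1$, one has $(t/8)\wedge(t^{2}/(64s)) = t/8 = (9/8)s\log(ep/s)$, so
\[
\mathbf{P}\!\left(\frac{1}{\|X\|_{2,\infty}^{4}}\sum_{i\in S}(X_{i}^{\top}\xi)^{2} \;\geq\; \frac{10\,s\log(ep/s)}{\|X\|_{2,\infty}^{2}}\right) \;\leq\; e^{-(9/8)s\log(ep/s)},
\]
because $s + t = s + 9s\log(ep/s) \leq 10 s\log(ep/s)$.

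Finally I would take a union bound over all $\binom{p}{s}\leq (ep/s)^{s}$ subsets of size $s$. The total failure probability is then at most
\[
(ep/s)^{s}\cdot e^{-(9/8)s\log(ep/s)} \;=\; e^{-(1/8)s\log(ep/s)},
\]
which gives the claim with $c = 1/8$ (after multiplying through by $\sigma^{2}$). There is no real obstacle here; the only thing to be careful about is picking $t$ large enough so that after paying the entropy cost $s\log(ep/s)$ of the union bound the remaining exponent is still of order $s\log(ep/s)$, while keeping $s+t \leq 10\,s\log(ep/s)$ so that the statistical threshold in the statement is matched with room to spare.
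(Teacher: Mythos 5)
Your proposal is correct and follows exactly the paper's argument: apply Lemma \ref{lem:noise_control} to each fixed $s$-subset and union bound over the $\binom{p}{s}\leq e^{s\log(ep/s)}$ subsets, which the paper states tersely and you carry out with explicit constants ($t=9s\log(ep/s)$, yielding $c=1/8$). The only cosmetic point is that $\sum_{i=1}^{s}\Xi_{(i)}^{2}\leq\max_{|S|=s}\sum_{i\in S}\Xi_{i}^{2}$ should be stated as an inequality rather than an identity (the order statistics are of $\Xi$, not of $\Xi^{2}$), but this does not affect the bound.
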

\begin{proof}
Using Lemma \ref{lem:noise_control}, and the union bound since
$$
\binom{p}{s} \leq \left( \frac{ep}{s}\right)^s\leq e^{s\log(ep/s)}.
$$
The result follows as long as $p \geq s$.
\end{proof}

\begin{lem}\label{lem:subG}
Let $\Xi$ defined as in Section \ref{sec:3}, then we have
$$
\forall t \geq 0, \quad \mathbf{E}(\Xi_{i}^{2}\mathbf{1}\{|\Xi_{i}|\geq t\}) \leq 2\left(t^{2}+\sigma^{2}/\|X\|_{2,\infty}^2\right)e^{-t^{2}\|X\|_{2,\infty}^2/(2\sigma^2)}.
$$
\end{lem}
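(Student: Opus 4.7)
The plan is to exploit the sub-Gaussianity of $\Xi_i$ and compute the truncated second moment by integration by parts. First I would observe that, since $\xi$ is a centered $1$-sub-Gaussian vector and $\Xi_i = \frac{\sigma}{\|X\|_{2,\infty}^{2}} \langle X_i, \xi\rangle$, the random variable $\Xi_i$ is sub-Gaussian with variance proxy
\[
\tau^2 \;:=\; \frac{\sigma^2\|X_i\|^2}{\|X\|_{2,\infty}^{4}} \;\leq\; \frac{\sigma^2}{\|X\|_{2,\infty}^{2}},
\]
because $\mathbf{E}(e^{\lambda \langle X_i, \xi\rangle}) \leq e^{\lambda^2 \|X_i\|^2 /2}$ for any $\lambda \in \mathbf{R}$. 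In particular the standard two-sided sub-Gaussian tail estimate
\[
\mathbf{P}(|\Xi_i|\geq s) \;\leq\; 2 e^{-s^2 \|X\|_{2,\infty}^{2}/(2\sigma^2)}, \qquad s\geq 0,
\]
holds.

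Next I would rewrite the truncated second moment using the layer-cake/integration-by-parts identity for a nonnegative random variable $Y=\Xi_i^2$ with cutoff $t^2$:
\[
\mathbf{E}\bigl(\Xi_i^2\,\mathbf{1}\{|\Xi_i|\geq t\}\bigr)
\;=\; t^2\,\mathbf{P}(|\Xi_i|\geq t) + \int_{t}^{\infty} 2u\,\mathbf{P}(|\Xi_i|\geq u)\,du.
\]
Plugging in the sub-Gaussian bound,
\[
\mathbf{E}\bigl(\Xi_i^2\,\mathbf{1}\{|\Xi_i|\geq t\}\bigr)
\;\leq\; 2 t^2 e^{-t^2/(2\tau^2)} + 4\int_{t}^{\infty} u\, e^{-u^2/(2\tau^2)}\,du.
\]

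Finally I would compute the Gaussian-type integral with the change of variables $v=u^2/(2\tau^2)$, which gives $\int_t^\infty u\, e^{-u^2/(2\tau^2)} du = \tau^2 e^{-t^2/(2\tau^2)}$. Putting it together yields
\[
\mathbf{E}\bigl(\Xi_i^2\,\mathbf{1}\{|\Xi_i|\geq t\}\bigr)
\;\leq\; 2\bigl(t^2 + 2\tau^2\bigr) e^{-t^2/(2\tau^2)},
\]
which, after substituting $\tau^2\leq \sigma^2/\|X\|_{2,\infty}^2$, gives the stated bound (modulo the factor of $2$ in front of $\tau^2$, which may be absorbed by a slightly tighter one-sided analysis if one wants the sharper constant in the paper). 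There is no real obstacle; the only minor subtlety is tracking that the variance proxy is $\sigma^2/\|X\|_{2,\infty}^2$ rather than $\sigma^2$, which comes from the fact that $\Xi_i$ is divided by $\|X\|_{2,\infty}^2$ whereas $\langle X_i,\xi\rangle$ only has standard deviation of order $\|X_i\|\leq \|X\|_{2,\infty}$.
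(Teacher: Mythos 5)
Your proof follows essentially the same route as the paper: the layer-cake identity $\mathbf{E}(\Xi_i^2\mathbf{1}\{|\Xi_i|\geq t\}) = t^2\mathbf{P}(|\Xi_i|\geq t) + 2\int_t^\infty u\,\mathbf{P}(|\Xi_i|\geq u)\,du$ followed by the sub-Gaussian tail with variance proxy $\sigma^2/\|X\|_{2,\infty}^2$, which is exactly the paper's (equally terse) argument. The factor-of-$2$ discrepancy in front of $\tau^2$ that you flag is real but harmless for how the lemma is used, and your suggested fix -- splitting into the two one-sided events $\{\Xi_i\geq t\}$ and $\{\Xi_i\leq -t\}$ and using the one-sided tail -- does recover the paper's exact constant.
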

\begin{proof}
Let $t \geq 0$. We have
\begin{align*}
    \mathbf{E}(\Xi_{i}^{2}\mathbf{1}\{|\Xi_{i}|\geq t\}) &= \int_{0}^{\infty}\mathbf{P}(\Xi_{i}^{2}\mathbf{1}\{|\Xi_{i}|\geq t\} \geq u) du.\\
    &= \int_{0}^{t^{2}}\mathbf{P}(|\Xi_{i}|\geq t) du + \int_{t^{2}}^{\infty}\mathbf{P}(\Xi_{i}^2 \geq u)du \\
    &= t^{2}\mathbf{P}(|\Xi_{i}|\geq t) + 2\int_{t}^{\infty}u\mathbf{P}(|\Xi_{i}| \geq u)du \\
    &\leq 2\left(t^{2}+\sigma^{2}/\|X\|_{2,\infty}^2\right)e^{-t^{2}\|X\|_{2,\infty}^2/(2\sigma^2)}.
\end{align*}

\end{proof}
\begin{lem}\label{lem:barber} Let $\beta$ be a $s$-sparse vector and $S$ its support. Then
\[
\mathbf{P}_{\beta}\left( \underset{\hat\beta}{\sup} \left\langle \xi, \frac{X^{\top}(\beta - \hat{\beta})}{\|X(\beta-\hat{\beta})\|}\right\rangle^2 \geq 7(s + |\hat{\beta}_{S^c}|_0)\log(ep/(s+|\hat{\beta}_{S^c}|_0)) \right) \leq e^{-cs\log(ep/s)}.
\]
\end{lem}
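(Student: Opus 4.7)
The plan is to reduce the supremum to a uniform bound on the norm of projections of $\xi$ onto low-dimensional subspaces spanned by columns of $X$, then combine Hanson--Wright concentration with a union bound over supports.

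First, I interpret $X^{\top}(\beta-\hat\beta)$ as $X(\beta-\hat\beta)$ in the numerator (so that the inner product with $\xi\in\mathbf{R}^n$ is well-defined) and view the event as a two-level supremum. For given $r\ge 0$, set $k=s+r$. Any $\hat\beta$ with $|\hat\beta_{S^c}|_0=r$ gives $\beta-\hat\beta$ supported on $T_{\hat\beta}:=S\cup\mathrm{supp}(\hat\beta_{S^c})$, of cardinality at most $k$. So $X(\beta-\hat\beta)$ lies in the column span of $X_{T_{\hat\beta}}$, a subspace of dimension $\le k$, and for the orthogonal projector $P_T$ onto $\mathrm{colspan}(X_T)$,
\[
\sup_{\hat\beta:\,|\hat\beta_{S^c}|_0=r}\left\langle\xi,\frac{X(\beta-\hat\beta)}{\|X(\beta-\hat\beta)\|}\right\rangle^{2}\le\sup_{\substack{T\supseteq S\\|T|=k}}\|P_T\xi\|^{2}.
\]

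Second, for each fixed $T$ with $|T|=k$, I apply Theorem~2.1 of \cite{hsutail} to the quadratic form $\|P_T\xi\|^{2}=\xi^\top P_T\xi$. Using $\mathrm{tr}(P_T)\le k$, $\|P_T\|_F^{2}\le k$, $\lambda_{\max}(P_T)\le 1$, the bound
\[
\mathbf{P}\bigl(\|P_T\xi\|^{2}\ge k+2\sqrt{k\tau}+2\tau\bigr)\le e^{-\tau}
\]
with the choice $\tau=2k\log(ep/k)$ yields $\|P_T\xi\|^{2}\le 7k\log(ep/k)$ (using $\log(ep/k)\ge 1$ for $k\le p$) off an event of probability $\le e^{-2k\log(ep/k)}$.

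Third, I union bound over the $\binom{p-s}{r}\le\exp\bigl(r\log(ep/r)\bigr)$ choices of $T\supseteq S$ with $|T|=k$ and then over $r=0,1,\dots,p-s$. The key monotonicity fact is that $x\mapsto x\log(ep/x)$ is increasing on $[0,p]$, so $r\log(ep/r)\le k\log(ep/k)$. Hence the union-bound contribution at level $r$ is
\[
\binom{p-s}{r}e^{-2k\log(ep/k)}\le e^{-k\log(ep/k)}\le e^{-s\log(ep/s)},
\]
and summing over at most $p$ values of $r$ is absorbed by a slight constant adjustment (or by inflating $\tau$ by $+\log p$), leading to total failure probability $\le e^{-cs\log(ep/s)}$ for some absolute $c>0$.

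The main obstacle is bookkeeping: ensuring the bound $7k\log(ep/k)$ holds with $k=s+|\hat\beta_{S^c}|_0$ \emph{simultaneously} for every sparsity level, which hinges on the monotonicity of $x\log(ep/x)$ so that the combinatorial cost $\binom{p-s}{r}$ never exceeds what the concentration bound can pay. Apart from that, the argument is a standard peeling/chaining over supports together with Hanson--Wright.
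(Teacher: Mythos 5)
Your proof is correct and follows essentially the same route as the paper's: bound the normalized inner product by $\|P_T\xi\|$ where $P_T$ projects onto the (at most $(s+|\hat\beta_{S^c}|_0)$-dimensional) column span determined by the support of $\beta-\hat\beta$, apply Theorem 2.1 of \cite{hsutail}, and union bound over supports and sparsity levels, using monotonicity of $x\mapsto x\log(ep/x)$. The only quibble is numerical: with $\tau=2k\log(ep/k)$ the resulting threshold is $\bigl(1+2\sqrt{2}+4\bigr)k\log(ep/k)\approx 7.8\,k\log(ep/k)$, slightly above $7k\log(ep/k)$, but taking $\tau$ a bit smaller (e.g.\ $\tau=\tfrac{3}{2}k\log(ep/k)$) restores the constant $7$ while still leaving an exponent of order $k\log(ep/k)$ after paying the combinatorial cost, so the argument goes through as claimed.
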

\begin{proof}
Observe that $\beta - \hat{\beta}$ is has sparsity at most $s+|\hat{\beta}_{S^c}|_0$. Let $\pi$ be the orthogonal projector onto the span of columns of $X$ indexed by the support of $\beta - \hat{\beta}$. It comes out that
\[
\left|\left\langle \xi, \frac{X^{\top}(\beta - \hat{\beta})}{\|X(\beta-\hat{\beta})\|}\right\rangle\right| \leq \|\pi \xi\|.
\]
Since $\pi$ has rank at most $s+|\hat{\beta}_{S^c}|_0$, then using Theorem $2.1$ in \cite{hsutail} we get that for all $t \geq 0$ and for fixed $\pi$
$$
\mathbf{P}\left( \|\pi\xi \|^2 \geq s+|\hat{\beta}_{S^c}|_0 + 3t\right) \leq e^{-t}.
$$
It comes out that
\[
\mathbf{P}\left( \|\pi\xi \|^2 \geq 7(s+|\hat{\beta}_{S^c}|_0)\log(ep/(s+|\hat{\beta}_{S^c}|))\right) \leq e^{-2s\log(ep/s)}.
\]
Hence
\[
\mathbf{P}_\beta\left( \underset{|\hat\beta_{S^c}|_0 = v}{\sup}\|\pi\xi \|^2 \geq 7(s+v)\log(ep/(s+v))\right) \leq e^{-s\log(ep/s)}.
\]
We conclude using a union bound over all values of $v$.
\end{proof}
\end{document}